\documentclass[11pt,a4paper]{article}
\usepackage{amsmath, amsfonts, amssymb,amsthm}
\usepackage{a4wide}
\usepackage{parskip}
\usepackage{enumitem}
\usepackage{xcolor}
\usepackage{pstricks}
\usepackage{hyperref}
\usepackage{latexsym}
\usepackage{cite}

\usepackage[left=1in, right=1in,top=1in,bottom=1in]{geometry}
\allowdisplaybreaks

\newcommand{\be} {\begin{equation}}
	\newcommand{\ee} {\end{equation}}
\newcommand{\bea} {\begin{eqnarray}}
	\newcommand{\eea} {\end{eqnarray}}
\newcommand{\Bea} {\begin{eqnarray*}}
	\newcommand{\Eea} {\end{eqnarray*}}

\numberwithin{equation}{section}

\newtheorem{definition}{Definition}[section]
\newtheorem{theorem}{Theorem}[section]
\newtheorem{rem}{Remark}[section]
\newtheorem{lemma}{Lemma}[section]
\newtheorem{prop}{Proposition}[section]

\numberwithin{equation}{section}

\begin{document}
\setlength{\abovedisplayskip}{3pt}
	\setlength{\belowdisplayskip}{3pt}
	\date{}
	\title{Normalized Solutions of the $L^2$-Supercritical NLS Equation on Noncompact Metric Graphs with a Short-Range Potential}

	\author{ {\bf Archana Prajapati$\,^{1,}$\footnote{e-mail: {\tt matharchana3@gmail.com}}}, {\bf Divya Goel$\,^{1,}$\footnote{e-mail: {\tt divya.mat@iitbhu.ac.in}}} \\ $^1\,$Department of Mathematical Sciences, Indian Institute of Technology (BHU),\\ Varanasi 221005, India.}
	
	\maketitle

		\begin{abstract}		
		We are concerned with the existence of normalized solutions to the $L^2$-supercritical nonlinear Schr\"odinger equation on a noncompact metric graph $G$,
		\[	\begin{cases}
			-u''+W(x)u+\lambda u=\chi(x)|u|^{p-2}u, & \text{on every edge } e \text{ of } G,\\[2mm]
			\displaystyle\sum_{e\succ v}u'_e(v)=0, & \text{at every vertex } v\in V,
		\end{cases}
		\]
		under the mass constraint $\int_G |u|^2\,dx=\mu>0$, where $\lambda$ arises as a Lagrange multiplier. Here $p>6$, $\chi$ is the characteristic function of the compact core $\mathcal K$, so that the nonlinearity is localized, and the potential $W$ is bounded, nonnegative and vanishing at infinity along every unbounded edge, with $\int_{1}^{\infty}xW(x)\, dx<\infty$ on at least one of them. For every $\mu>0$ we obtain a positive solution with $\lambda>0$, arising as a constrained critical point at a strictly positive energy level. Our approach combines a uniform mountain-pass geometry for a family of approximating functionals, the monotonicity trick with Morse index type information, and a blow-up analysis ruling out the divergence of the Lagrange multipliers. A key point is the strict positivity of the multiplier, for which we exhibit a potential of inverse-square decay admitting a zero-energy $L^2$ solution on a half-line. To the best of our knowledge, this is the first existence result for normalized solutions of the $L^2$-supercritical NLS equation on a noncompact metric graph in the presence of an external potential.
		\\ [3mm]

		\noindent \textbf{Key words:} Normalized solutions, potential, localized nonlinearities, $L^2$-Supercritical, 
		noncompact metric graphs, nonlinear Schr\"odinger equation
		
		\medskip
		
		\noindent {2020 Mathematics Subject Classification: 35R02, 35J60, 47J30.}
	\end{abstract}
	
	\section{Introduction}
	In this paper, we study the existence of bound states of prescribed mass for the $L^2$-supercritical NLS equation with a potential vanishing at infinity and localized nonlinearity on a noncompact metric graph $G$,
	\begin{equation}\label{b1:1}
	-u'' + W(x)u + \lambda u = \chi(x)|u|^{p-2}u, 	\qquad \text{on every edge } e \text{ of } G,
	\end{equation}
	coupled with continuity at the vertices and the Kirchhoff conditions 	\begin{equation}\label{b1:2}
	\sum_{e\succ v} u'_e(v)=0, \qquad \text{at every vertex } v\in V,
	\end{equation}
	the derivatives being taken outward from $v$. Here $p>6$, $G$ is a connected noncompact metric graph, $W: G\to\mathbb{R}$ is a given potential, $\lambda\in\mathbb{R}$ is not prescribed a priori but arises as a Lagrange multiplier, and $\chi$ is the characteristic function of the compact core $\mathcal K$ of $G$, that is, of the metric subgraph formed by all bounded edges \cite{adami2016threshold}. A solution of \eqref{b1:1}--\eqref{b1:2} subject to \begin{equation}\label{b1:3}
	\int_G |u|^2\,dx = \mu > 0
	\end{equation}
	is called a \emph{normalized solution}; such solutions are the critical points of the energy functional 	\begin{equation}\label{b1:4}
	E_W(u,G) = \frac12\int_G |u'|^2\,dx + \frac12\int_G W(x)|u|^2\,dx - \frac1p\int_{\mathcal K} |u|^p\,dx
	\end{equation}
	constrained to $H^1_\mu(G)=\{u\in H^1(G): \|u\|_{L^2(G)}^2=\mu\}$. Stationary solutions of \eqref{b1:1} correspond to standing waves $\psi(t,x)=e^{i\lambda t}u(x)$ of the time-dependent equation
	\begin{align*} 	i\,\partial_t\psi = -\partial_{xx}\psi + W(x)\psi - {\chi(x)}|\psi|^{p-2}\psi \qquad \text{on } \mathbb{R}\times G, \end{align*}
	and the constraint \eqref{b1:3} expresses the conservation of mass, or charge, of the system; in quantum mechanics the normalization $\|u\|_{L^2(G)}^2=1$ says that the total probability equals one. The variational study of \eqref{b1:1} rests on the Gagliardo--Nirenberg inequality on graphs \cite{adami2016threshold, tentarelli2016nls}, which singles out $p=6$ as the $L^2$-critical exponent. The range $p\in(2,6)$ is the $L^2$-subcritical regime, $p=6$ the critical one, and $p\in(6,\infty)$ the $L^2$-supercritical regime considered here.

	Nonlinear Schr\"odinger (NLS) equations on metric graphs describe wave propagation along network-shaped structures, and they occur in optical waveguides, photonic crystals and nanostructures. For the physical background, we refer to the surveys \cite{noja2014nonlinear, berkolaiko2013introduction}, and for the connection with the theory of quantum graphs to \cite{kairzhan2022standing, berkolaiko2013introduction}. When the nonlinear response of the medium is confined to a bounded region of the network, as happens in optical fibers with a localized nonlinear response or in Bose--Einstein condensates trapped in nonuniform media \cite{noja2014nonlinear, gnutzmann2011stationary}, the nonlinearity acts only on a compact part of the graph.

	\medskip
	\noindent\textbf{Known results.}
	The variational theory of NLS on metric graphs was started in the $L^2$-subcritical range $2<p<6$, where the energy is bounded below on the mass constraint and minimization applies. In a series of works, Adami, Serra and Tilli \cite{adami2015nls, adami2016threshold, adami2017negative} proved that the existence of ground states depends on both the topology and the metric of $G$, a phenomenon with no Euclidean counterpart. In \cite{adami2015nls}, the authors established a purely topological obstruction: if $G$ satisfies the assumption $(H)$, then the infimum of the energy equals the corresponding infimum on the real line and is never attained, so no ground state exists for any mass. Graphs violating $(H)$, such as the tadpole or graphs with a terminal edge, can admit ground states, and threshold phenomena in the mass appear \cite{adami2016threshold}. In \cite{adami2017negative}, the authors settled the mass-critical case $p=6$, where the governing parameter is the critical mass associated with the best constant in the $L^2$-critical Gagliardo--Nirenberg inequality on $G$, and ground states, when they exist, have negative energy. Multiplicity of bound states in the subcritical regime was obtained in \cite{adami2019multiple}. The compact case, where the embedding $H^1(G)\hookrightarrow L^p(G)$ is compact and the analysis simplifies, was treated in \cite{cacciapuoti2018variational, dovetta2018existence}; combined nonlinearities and local minimizers were studied in \cite{pierotti2022ground, pierotti2021local}, and periodic graphs in \cite{pankov2018nonlinear}.

	A second line of work, closer to the present setting, concerns nonlinearities acting only on the compact core $\mathcal K$, as in \eqref{b1:1}. Along the half-lines the equation is then linear, so these models behave differently from the extended case. In the subcritical regime, existence and nonexistence of ground states were established by Tentarelli \cite{tentarelli2016nls} and by Serra and Tentarelli \cite{serra2016bound}, the thresholds being expressed through the measure $|\mathcal K|$ rather than the topology alone; in \cite{serra2016bound}, the authors obtained multiple bound states at negative energy levels by genus theory. The critical case $p=6$ with localized nonlinearity was studied by Dovetta and Tentarelli \cite{DovettaTentarelli2019, dovetta2020ground}, first on the tadpole graph and then in general, where existence is governed by a reduced critical mass depending on both metric and topological features of $G$. Peaked and low-action solutions on graphs with terminal edges \cite{dovetta2020peaked}, edge-localized states in the large-mass limit \cite{berkolaiko2021edge}, and peaked bound states \cite{chen2024existence} have also been constructed. For the $\mathcal T$-graph, a classification of the positive solutions of $-u''+\lambda u=|u|^{p-2}u$, together with uniqueness of action ground states, was obtained in \cite{agostinho2024classification}.

	The supercritical regime $p>6$ is different in nature. Here $E_W(\cdot,G)$ is unbounded below on $H^1_\mu(G)$, minimization is unavailable, and one looks for critical points of saddle type. In \cite{jeanjean1997existence}, Jeanjean solved the corresponding problem in $\mathbb{R}^N$ using the dilations $u\mapsto s^{N/2}u(s\cdot)$, which preserve the mass and produce a bounded Palais--Smale sequence through the Pohozaev identity; the method was later carried over to systems, combined nonlinearities and the Sobolev critical case \cite{bartsch2017natural, soave2020normalized, soave2020normalizedsobolev}, and to bounded domains \cite{noris2014existence, pierotti2017normalized}. On a metric graph these dilations are unavailable, since rescaling changes the lengths of the bounded edges; the graph is not scale invariant, there is no Pohozaev identity, and the arguments resting on it do not carry over. In \cite{chang2023normalized}, Chang, Jeanjean and Soave overcame this for compact graphs, combining a parametrized mountain-pass scheme with the monotonicity trick and second-order Morse-type information on the Palais--Smale sequences supplied by the abstract theorem of \cite{borthwick2024bounded}, the boundedness of the multipliers being recovered by a blow-up analysis in the spirit of \cite{esposito2011pointwise}. In \cite{borthwick2023normalized}, the authors carried this to noncompact graphs with the nonlinearity localized on the compact core, obtaining for every $\mu>0$ a positive normalized solution with $\lambda>0$ at a positive energy level; there the noncompactness is compensated by the vanishing of the nonlinearity along the half-lines, where the equation reduces to $-u''+\lambda u=0$ and solutions decay exponentially.
    
	Recently, in \cite{carrillo2026blow}, Carrillo, De Coster, Galant, Jeanjean and Troestler developed a general blow-up analysis for NLS equations on metric graphs, valid for possibly sign-changing solutions and for sequences of potentials bounded in $L^\infty$.  For sequences of solutions with $\lambda_n\to+\infty$ and uniformly bounded Morse index, their Theorem 1.3 gives a global exponential decay estimate of the type of our Proposition \ref{bp:4}; when in addition the mass is fixed and $p\neq6$, their Corollary 1.4 yields the boundedness of $\{\lambda_n\}$, which is the conclusion of Step 1 in the proof of Theorem \ref{bt:2}. Their Proposition 4.8 gives, for finite Morse index solutions on a graph with a half-line, a lower bound on $\lambda$ in terms of the behaviour of $W$ at infinity, of which $\lambda\ge0$ under $(W3)$ is a particular case; our Lemma \ref{bl:3} serves the same purpose for the approximate Morse index of Palais--Smale sequences. They also study the $L^2$ norms of solutions as $\lambda\to0^+$, in settings where the potential vanishes on the half-lines and the solutions there are explicit.

	Much less is known when an external potential is present. In the Euclidean setting the theory is well developed. Ikoma and Miyamoto \cite{ikoma2020stable} treated the mass-subcritical case by minimization. Bartsch, Molle, Rizzi, and Verzini \cite{bartsch2021normalized} obtained normalized solutions in the mass-supercritical, Sobolev-subcritical range for potentials $V\ge0$ vanishing at infinity, by a new min-max argument replacing the scaling geometry that the potential destroys. Molle, Riey, and Verzini \cite{molle2022normalized} studied negative potentials, obtaining two solutions for small mass together with a nonexistence result. On metric graphs, potentials have been considered mostly outside the supercritical regime. In \cite{cacciapuoti2015ground}, the authors studied ground states and their orbital stability on starlike graphs with potentials. The existence of ground states for attractive potentials on noncompact graphs was established in \cite{AdamiGalloSpitzkopf2025}, and that of multiple positive bound states in \cite{liu2026multiple}, again in the subcritical regime; magnetic potentials were considered in \cite{cangiotti2026nonlinear}. For the fixed-frequency problem, Akduman and Pankov \cite{akduman2019nonlinear} proved the existence of nontrivial solutions on infinite graphs for potentials growing at infinity, in which case the spectrum of $-\tfrac{d^2}{dx^2}+W$ is discrete and compactness is restored; see also \cite{pankov2018nonlinear} for periodic potentials.

	On metric graphs, the available results thus concern either the subcritical regime or the fixed-frequency problem. To the best of our knowledge, no existence result is available for normalized solutions of the $L^2$-supercritical NLS equation on a noncompact metric graph in the presence of an external potential. The present article addresses this question.

	\medskip

	\noindent\textbf{Assumptions and main result.}
	Throughout we assume that $W:G\to\mathbb{R}$ satisfies
	\begin{itemize}
		\item[$(W1)$] $W\in L^\infty(G)$;
		\item[$(W2)$] $W(x)\ge 0$ for a.e.\ $x\in G$;
		\item[$(W3)$] $\lim_{x\to\infty} W_e(x)=0$ for every unbounded edge $e$ of $G$;
		\item[$(W4)$] $\displaystyle\int_1^{+\infty}x{W_\ell}(x)\,dx<+\infty$ {for} at least one unbounded edge $\ell\simeq[0,+\infty)$ of $G$.
	\end{itemize}
	Assumption $(W4)$ is a short-range condition of the kind used in one-dimensional scattering theory. It is required on a single unbounded edge, and by $(W1)$ it restricts only the behaviour of $W$ at infinity; it holds for every compactly supported potential, for every exponentially decaying one, and whenever $W(x)=O(x^{-2-\delta})$ for some $\delta>0$.
	\begin{theorem}\label{bt:2}
	Let $G$ be a noncompact metric graph with finitely many edges and nonempty compact core $\mathcal K$, let $p>6$, and let $W$ satisfy $(W1)$--$(W4)$. Then, for every $\mu>0$, problem \eqref{b1:1}--\eqref{b1:3} admits a positive solution $u$, associated with a Lagrange multiplier $\lambda>0$, which is a constrained critical point of $E_W(\cdot,G)$ on $H^1_\mu(G)$ at a strictly positive energy level.
	\end{theorem}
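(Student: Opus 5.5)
The proof follows the scheme of Chang--Jeanjean--Soave and of \cite{borthwick2023normalized}, adapted to the potential $W$. For $\rho\in[1/2,1]$ we introduce the family
\[
E_\rho(u)=\frac12\int_G|u'|^2+\frac12\int_G W|u|^2-\frac{\rho}{p}\int_{\mathcal K}|u|^p
\]
on $H^1_\mu(G)$.

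\textbf{Step 0: uniform mountain-pass geometry.} We show that the mountain-pass geometry holds uniformly in $\rho$. By the Gagliardo--Nirenberg inequality on $G$ and $W\ge0$, we have $E_\rho(u)\ge \frac12\|u'\|_2^2-C\mu^{(p+2)/4}\|u'\|_2^{(p-2)/2}$. Since $p>6$, the exponent $(p-2)/2$ exceeds $2$, so $E_\rho$ is bounded below by a positive constant on the sphere $\{\|u'\|_2^2=k\}$ for $k$ small. On the other hand:
\begin{itemize}
\item there are mass-$\mu$ functions with very small kinetic energy, obtained by spreading mass along a half-line, and these have energy below that level because $W\in L^\infty$;
\item a function concentrated inside $\mathcal K$, say $\mu$-normalized bumps $s^{1/2}\varphi(s(x-x_0))$ supported in an edge of $\mathcal K$, has $E_1\to-\infty$ as $s\to\infty$, since $p>6$.
\end{itemize}
This gives a mountain-pass level $c_\rho\ge\alpha>0$, nonincreasing in $\rho$.

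\textbf{Step 1: bounded Palais--Smale sequences and a.e. critical points.} We apply the abstract monotonicity-trick theorem of \cite{borthwick2024bounded}. For almost every $\rho$ it provides a bounded Palais--Smale sequence at level $c_\rho$ with approximate Morse index at most $1$ and multipliers $\lambda_n$. We then prove that $\lambda_n\to\lambda_\rho\ge0$ along a subsequence. The Morse index information gives $\lambda\ge0$: if $\lambda<0$, then widely spread test functions on a half-line, where $W\to0$ by $(W3)$ and the equation is linear, produce a two-dimensional space of negative directions (this is the role of Lemma \ref{bl:3}).

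Given $\lambda>0$, compactness holds: the nonlinearity lives on the compact $\mathcal K$, so $\int_{\mathcal K}|u_n|^p$ converges by the compact embedding on $\mathcal K$. The quadratic part, with $\lambda>0$ and $W\ge0$, controls the $H^1$ norm, so the weak limit is a strong limit. This yields a critical point $u_\rho$ with mass $\mu$ and Morse index at most $1$. Replacing $u_n$ by $|u_n|$ and applying the maximum principle, $u_\rho$ is positive.

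\textbf{Step 2: blow-up analysis as $\rho_n\to1$.} Take $\rho_n\to1$ and the corresponding $(u_n,\lambda_n)$. First, $\{\lambda_n\}$ is bounded. Suppose instead $\lambda_n\to\infty$. Rescaling around maximum points at scale $\lambda_n^{-1/2}$, and using that $W$ is bounded so that $W/\lambda_n\to0$, the Morse bound gives convergence to a solution on $\mathbb R$, on a half-line, or on a star graph. The exponential decay estimate (Proposition \ref{bp:4}) then shows
\[
\mu=\int_G u_n^2\sim \lambda_n^{2/(p-2)-1/2}\int u_\infty^2 .
\]
For $p>6$ the exponent $2/(p-2)-1/2$ is negative, so the right-hand side tends to $0$, contradicting the fixed mass. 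Hence $\lambda_n$ is bounded and $u_n$ is bounded in $H^1$ through the energy identity. Passing to the limit, with $\lambda_n\to\lambda\ge0$, gives a solution at level $c_1\ge\alpha>0$, provided the mass is not lost.

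\textbf{Step 3: $\lambda>0$, the main obstacle.} If $\lambda=0$, mass can escape along the half-lines. We also need $\lambda\ne0$ for the limit itself. On each half-line $u$ solves $-u''+Wu=0$ with $u>0$ and $u\in L^2$. Along the edge $\ell$ of $(W4)$, one-dimensional scattering theory (Levinson-type asymptotics, which is where $\int_1^\infty xW<\infty$ enters) shows that solutions of $-u''+Wu=0$ behave like $a+bx+o(1)$, so no nonzero $L^2$ solution exists there. Hence $u\equiv0$ on $\ell$, and by unique continuation for the linear ODE, $u\equiv0$ along $\ell$ up to its vertex, $u(v)=u'(v)=0$. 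Positivity of $u$ then gives a contradiction.

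The delicate points are:
\begin{itemize}
\item handling $\lambda_n\to0^+$ with possible mass leakage, where the $L^2$ limit may have mass less than $\mu$;
\item obtaining strong convergence.
\end{itemize}
For the first, if $\lambda=0$ we prove that the weak limit still solves the equation with $\lambda=0$ and is nonnegative and nontrivial, since the energy level is positive and $\int_{\mathcal K}u^p$ is preserved. The argument above then excludes this. The inverse-square counterexample in the abstract explains why $(W4)$ cannot be dropped. Finally, with $\lambda>0$, strong $L^2$ convergence follows from exponential decay along the half-lines, which is uniform because $\lambda_n\ge\lambda/2$ and $W\ge0$.
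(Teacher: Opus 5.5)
Your outline follows the paper's proof: the same family $E_\rho$, a uniform mountain-pass geometry, and the monotonicity trick with approximate Morse index. You also use bumps far out on a half-line to get $\lambda\ge0$, the mass scaling $\lambda_n^{2/(p-2)-1/2}$ to rule out blow-up, and the $(W4)$ edge to exclude $\lambda=0$. However, two steps fail as written.

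First, the low endpoint $w_1$. From $W\in L^\infty$ you only get $E_\rho(w_1)\le\frac12\|w_1'\|_2^2+\frac12\|W\|_\infty\mu$. Your barrier $\alpha$ depends only on $\mu$ and $p$, because you dropped $\int_G W|u|^2\ge0$. So when $\|W\|_\infty\mu\ge2\alpha$, your argument does not place $w_1$ below the barrier. The missing ingredient is $(W3)$: for $v_t=t^{1/2}v(t\,\cdot)$ on a half-line, $\int_G W|v_t|^2=\int W(y/t)|v(y)|^2\,dy\to0$ as $t\to0^+$. This is exactly how Lemma~\ref{bl:2} uses it.

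Second, the dimension count for $\lambda\ge0$. The bound $\tilde m_{\theta_n}(u_n)\le1$ concerns subspaces of $T_{u_n}S_\mu$, which has codimension one in $H^1(G)$. A two-dimensional negative space in $H^1(G)$ therefore only guarantees a one-dimensional negative subspace of the tangent space, which gives no contradiction. You need three disjointly supported bumps, as in Lemmas~\ref{bl:1} and~\ref{bl:3}.

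In Step 1 you also write ``given $\lambda>0$'' without proving it. This is needed at each fixed $\rho$, because the $u_\rho$ entering the blow-up must be genuine critical points of mass $\mu$. Your Step 3 argument supplies it verbatim: nontriviality follows from $c_\rho\ge\alpha>0$ and compactness on $\mathcal K$, and $\lambda_\rho=0$ is then excluded on the $(W4)$ edge. This is how Proposition~\ref{bp:1} proceeds.

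Two of your sub-arguments genuinely differ from the paper, and both are valid.
\begin{itemize}
\item To exclude zero-energy $L^2$ solutions you use Levinson-type asymptotics (a fundamental system behaving like $1$ and $x$ when $\int^\infty x|W|<\infty$). This needs no sign on $W$ but relies on a quoted classical theorem. Lemma~\ref{bl:7} is elementary instead: it uses $W\ge0$ to get convexity and monotonicity, the identity $\varphi(x)=\int_x^\infty(t-x)W\varphi\,dt$, and the contradiction $\varphi(x_1)\le\theta\varphi(x_1)$ with $\theta<1$.
\item You obtain strong convergence from uniform exponential tails on the half-lines ($u_n''\ge\frac{\hat\lambda}{2}u_n$ with $u_n(0)$ bounded). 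The paper reads it off the quadratic identity \eqref{b:55}.
\end{itemize}
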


	\medskip
	\noindent\textbf{Strategy and main difficulties.}
	We introduce the parametrized family 	\begin{align*} 	E_{W,\tau}(u,G) = \frac12\int_G |u'|^2\,dx + \frac12\int_G W(x)|u|^2\,dx - \frac{\tau}{p}\int_{\mathcal K} |u|^p\,dx , 	\qquad \tau\in\Bigl[\tfrac12,1\Bigr], 	\end{align*} 	show that it has a mountain-pass geometry on $H^1_\mu(G)$ uniformly in $\tau$, apply the monotonicity trick of \cite{borthwick2024bounded} to produce, for almost every $\tau$, a mountain-pass solution $u_\tau$ with $m(u_\tau)\le2$, and then let $\tau\to1^-$. The potential enters at every step; at three points the autonomous arguments of \cite{borthwick2023normalized} provide no guidance.

	The first is the mountain-pass geometry (Lemma~\ref{bl:2}). By $(W2)$ the term $\tfrac12\int_G W|u|^2$ leaves the coercivity estimate intact, but the low endpoint $w_1$ must be built as a bump spreading along a half-line, its energy being pushed below the barrier level only because $(W3)$ forces $\int_G W|w_1|^2\to0$; the same mechanism reappears in Lemma~\ref{bl:3}.

	The second is the strict positivity of $\lambda_\tau$ (Proposition~\ref{bp:1}). When $W\equiv0$ and $\lambda_\tau=0$, the equation on a half-line reduces to $-u_\tau''=0$, and $u_\tau\in L^2$ forces $u_\tau\equiv0$; here $u_\tau$ would solve $-u_\tau''+Wu_\tau=0$, and the variational scheme yields only $\lambda_\tau\ge0$. The sign condition $(W2)$ does not improve this: it gives $\inf\sigma\bigl(-\tfrac{d^2}{dx^2}+W\bigr)\ge0$ but no spectral gap, since under $(W3)$ the essential spectrum is $[0,+\infty)$. What must be ruled out is a zero-energy $L^2$ solution on a half-line, and such solutions exist under $(W1)$--$(W3)$: for $W(x)=2(1+x)^{-2}$, the function $(1+x)^{-1}$ is one. Assumption $(W4)$ excludes them (Lemma~\ref{bl:7}), whence $\lambda_\tau>0$; the same argument gives $\hat\lambda>0$ in Step 4 of the proof of Theorem~\ref{bt:2}.

	The third is the limit $\tau\to1^-$, where boundedness of $\{u_n\}$ reduces to that of $\{\lambda_n\}$. If $\lambda_n\to+\infty$, a blow-up analysis in the spirit of \cite{chang2023normalized, esposito2011pointwise} (Lemmas~\ref{bl:4}--\ref{bl:6}, Propositions~\ref{bp:2}--\ref{bp:4}) shows that at most two concentration points occur and that $u_n$ decays exponentially away from them; since $p>6$, the total mass then vanishes, contradicting $\|u_n\|_{L^2(G)}^2=\mu>0$.

    In the autonomous case $\lambda_\tau>0$ is immediate; here it fails in general, a zero-energy $L^2$ mode of $-\tfrac{d^2}{dx^2}+W$ on a half-line being an obstruction that no variational argument removes. This obstruction is invisible when $W$ vanishes on the half-lines, where the solutions are the explicit exponentials $ce^{-\sqrt\lambda\,x}$. Assumption $(W4)$ and Lemma \ref{bl:7} settle the general case, and we pin down where each of $(W2)$, $(W3)$ and $(W4)$ is used.
	\medskip
	\noindent\textbf{Organization of the paper.}
	Section~\ref{bs:1} collects the functional framework on metric graphs. In Section~\ref{bs:2} we establish the existence of solutions to the approximating problems. Section~\ref{bs:3} is devoted to the blow-up analysis and to the proof of Theorem~\ref{bt:2}.
	
	\section{Preliminaries}\label{bs:1}
	Throughout this work, $G = (E, V)$ denotes a connected, noncompact metric graph, with $E$ its edge set and $V$ its vertex set. We assume $G$ has finitely many edges and a nonempty compact core $\mathcal{K}$. Denote by $\{e_1,\ldots,e_{m_1}\}$ the bounded edges, by $\{\ell_1,\ldots,\ell_{m_2}\}$ the half-lines, and by $\{v_1,\ldots,v_{m_3}\}$ the vertices of $G$. Each bounded edge $e$ is identified with $[0,\ell_e]$, $\ell_e>0$ being its length, while each half-line is identified with $[0,+\infty)$. The compact core $\mathcal{K}$ is the union of all bounded edges.  We view a function $u$ on $G$ as a collection $\{u_e\}$, each $u_e$ defined on $(0,\ell_e)$ for $e\in E$, rather than prescribed only at the vertices as in discrete models. Fix $1 \le p \le \infty$. We say $u\in L^p(G)$ if $u_e \in L^p(0,\ell_e)$ for every $e \in E$ and 
    \begin{align*} \|u\|_{L^p(G)}^p = \sum_{e \in E} \|u_e\|_{L^p(e)}^p < \infty, \end{align*} which yields the direct sum decomposition $L^p(G) = \bigoplus_{e \in E} L^p(e).$  The Sobolev space $H^1(G)$ is the set of continuous functions $u$ on $G$ with $u_e \in H^{1}(I_e)$ for each $e \in E$ and \begin{align*} \|u\|_{H^{1}(G)}^2 = \sum_{e \in E}\left( \|u_e\|_{L^2(e)}^2 +\|u_e'\|_{L^2(e)}^2\right) < \infty. \end{align*} Viewing $H^1(G)$ and $L^2(G)$ as infinite dimensional spaces, we suppose the continuous embedding $H^1(G)\hookrightarrow L^2(G)$ has norm at most $1$ and identify $H^1(G)$ with its image in $L^2(G)$. For each $\mu>0$, put \begin{align*} S_{\mu} =\bigl\{u\in H^1(G):\, \|u\|_{{L}^{2}(G)}^2=\mu\bigr\}. \end{align*} Then $S_{\mu}$ is a smooth submanifold of $H^1(G)$ of codimension $1$, whose tangent space at $u \in S_{\mu}$ is the closed codimension $1$ subspace 
    \begin{align*} T_uS_{\mu} = \left\{w \in H^1(G) : \int_G uw \, dx =0 \right\}. \end{align*}  Hereafter, $\|\cdot\|_*$ and $\|\cdot\|_{**}$ denote the operator norms of $\mathcal{L}(H^1(G), \mathbb{R})$ and of $\mathcal{L}(H^1(G), \mathcal{L}(H^1(G), \mathbb{R}))$, respectively.
\begin{definition}\label{bd:1}
	Let $f : H^1(G) \to \mathbb{R}$ be a $C^2$-functional and $\alpha \in (0,1]$. We say $f'$ and $f''$ are $\alpha$-H\"older continuous on bounded sets if, for every $R>0$, there is $M = M(R)>0$ such that, for all $u_1, u_2 \in B(0,R)$,
	\begin{align*}\|f'(u_1)-f'(u_2)\|_{*} \le M\|u_1-u_2\|^{\alpha}_{H^1(G)}\end{align*}
	and
	\begin{align*}\|f''(u_1)-f''(u_2)\|_{**} \le M\|u_1-u_2\|^{\alpha}_{H^1(G)}.\end{align*}
\end{definition}

\begin{definition}\label{bd:2}
	Let $f$ be a $C^2$-functional on $H^1(G)$. For each $u \in H^1(G)$, define the continuous bilinear map 	
    \begin{align*} 
    D^2{f(u)} = f''(u) - \frac{f'(u) \cdot u}{\|u\|^2_{L^2(G)}}(\cdot, \cdot)_{L^2(G)}, 	
    \end{align*} 	
    where $(\cdot,\cdot)_{L^2(G)}$ is the inner product of $L^2(G)$.
\end{definition}

\begin{definition}\label{bd:3}
	Let $f$ be a $C^2$-functional on $H^1(G)$. For $u\in S_{\mu}$ and $\theta>0$, the $\theta$-approximate Morse index of $u$ is
	\begin{align*}
	\tilde{m}_{\theta}(u) = \sup \left\{ \dim Y :
	Y \subset T_{u}S_{\mu} \text{ is a subspace and } D^{2}f(u)[\varphi,\varphi] < -\theta \|\varphi\|^{2}_{H^1(G)}, \ \forall\,\varphi\in Y\setminus\{0\}
	\right\}.
	\end{align*}
	If $u$ is a critical point of $f|_{S_{\mu}}$ and $\theta=0$, then $\tilde{m}_{0}(u)$ is the Morse index of $u$ on $S_{\mu}$.
\end{definition}

We recall the following abstract result from \cite{borthwick2024bounded}.
\begin{theorem}\label{bt:1}
	Let $I\subset(0,+\infty)$ be an interval and $\{F_{\tau}\}_{\tau\in I}$ a family of $C^{2}$-functionals on $H^1(G)$ of the form
	\begin{align*}F_{\tau}(u)=A(u)-\tau B(u),\qquad \tau\in I,\end{align*}
	with $B(u)\ge 0$ for every $u\in H^1(G)$ and
	\begin{equation}\label{b:1}
		\text{either }A(u)\to +\infty \text{ or } B(u)\to +\infty
		\quad\text{as}\quad \|u\|_{H^1(G)}\to +\infty.
	\end{equation}
	Suppose $F_{\tau}'$ and $F_{\tau}''$ are $\alpha$-H\"older continuous on bounded subsets of $H^1(G)$ for some $\alpha\in(0,1]$. Let $w_1,w_2\in S_{\mu}$ be
	independent of $\tau$, and set
	\begin{align*}\mathbb{P}= \{ P\in C([0,1],S_{\mu}) :\, P(0)=w_1,\ P(1)=w_2 \}.\end{align*}
	Assume
	\begin{align*}	c_{\tau} = \inf_{P\in\mathbb{P}} \max_{t\in[0,1]} F_{\tau}(P(t)) > \max\{F_{\tau}(w_1),F_{\tau}(w_2)\}, \qquad \tau\in I. 
    \end{align*}
	Then, for almost every $\tau\in I$, there exist $\{u_n\}\subset S_{\mu}$ and $\theta_n\to 0^{+}$ such that, as $n\to\infty$,
	\begin{enumerate}
		\item[(i)] $F_{\tau}(u_n)\to c_{\tau}$;
		\item[(ii)] $\|F_{\tau}'|_{S_{\mu}}(u_n)\|_{*}\to 0$;
		\item[(iii)] $\{u_n\}$ is bounded in $H^1(G)$;
		\item[(iv)] $\tilde m_{\theta_n}(u_n)\le 1$. 
	\end{enumerate}
\end{theorem}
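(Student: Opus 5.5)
The plan is the Struwe--Jeanjean monotonicity trick, combined with a second-order deformation of Fang--Ghoussoub type on the manifold $S_\mu$. \emph{Step 1 (monotonicity).} Since $B\ge0$, for $\tau_1<\tau_2$ we have $F_{\tau_2}\le F_{\tau_1}$ pointwise. Hence $\tau\mapsto c_\tau$ is nonincreasing, and so it is differentiable at almost every $\tau\in I$. We fix such a $\tau$ and write $c'_\tau$ for its derivative.

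\emph{Step 2 (bounded almost-optimal paths).} Take $\tau_n\uparrow\tau$ and set $\varepsilon_n=\tau-\tau_n$. Choose $P_n\in\mathbb P$ with $\max_t F_{\tau_n}(P_n(t))\le c_{\tau_n}+\varepsilon_n$. Consider any $t$ with $F_\tau(P_n(t))\ge c_\tau-\varepsilon_n$. Because $F_{\tau_n}-F_\tau=\varepsilon_n B$, we get
\[
B(P_n(t))=\frac{F_{\tau_n}(P_n(t))-F_\tau(P_n(t))}{\varepsilon_n}\le \frac{c_{\tau_n}-c_\tau}{\tau-\tau_n}+2\le -c'_\tau+3
\]
for $n$ large. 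Then $A(P_n(t))=F_\tau(P_n(t))+\tau B(P_n(t))$ is also bounded, so by \eqref{b:1} we obtain $\|P_n(t)\|_{H^1(G)}\le R_0$, with $R_0$ independent of $n$. Thus every almost-maximal point of $F_\tau$ on $P_n$ lies in the ball $B(0,R_0)$. We also note that $P_n$ is admissible for $\tau$, with $\max F_\tau\circ P_n\le c_\tau+C\varepsilon_n$.

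\emph{Step 3 (first-order: a bounded Palais--Smale sequence).} Suppose that, for some $\delta>0$, $\|F_\tau'|_{S_\mu}\|_*\ge\delta$ on the set $\{u\in B(0,R_0+1)\cap S_\mu: |F_\tau(u)-c_\tau|\le\delta\}$. Using a pseudo-gradient vector field tangent to $S_\mu$, we deform $P_n$ only near its almost-maximal points, which is possible because these points lie in the bounded set of Step 2. This produces a path in $\mathbb P$ with maximum below $c_\tau$. The endpoints are not moved, thanks to the strict inequality $c_\tau>\max\{F_\tau(w_1),F_\tau(w_2)\}$. This contradicts the definition of $c_\tau$, and yields (i), (ii) and (iii) for a sequence $u_n$ taken on the paths $P_n$.

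\emph{Step 4 (second-order: the Morse information (iv); the main obstacle).} We argue by contradiction. Assume that for some $\theta>0$ every point $u$ of $S_\mu\cap B(0,R_0+1)$ which is almost critical at level $c_\tau$ carries a $2$-dimensional subspace $Y_u\subset T_uS_\mu$ on which $D^2F_\tau(u)<-\theta\|\cdot\|^2$. The $\alpha$-H\"older continuity of $F_\tau'$ and $F_\tau''$ on bounded sets lets us keep a nearby $2$-dimensional negative subspace, with constant $\theta/2$, on a uniform neighbourhood of radius $r(\theta,R_0)$. We then use the quadratic expansion
\[
F_\tau(\exp_u(\varphi))\le F_\tau(u)+F_\tau'(u)\varphi-\tfrac{\theta}{4}\|\varphi\|^2 ,
\]
where the small first-order term is controlled by (ii) and $\exp_u$ denotes the retraction onto $S_\mu$. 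This shows that in such neighbourhoods the sublevel set $\{F_\tau<c_\tau-\eta\}$ is locally homotopically $S^1$-like, hence locally connected in the right sense. A one-parameter path passing through the neighbourhood can therefore be rerouted around the point inside the negative $2$-plane, again decreasing its maximum below $c_\tau$. The delicate part is to glue these local pushes along the compact parameter interval by a partition of unity while keeping the path inside $B(0,R_0+1)$ and preserving the first-order decrease from Step 3. This is precisely where the dimension count enters: since the family is $1$-dimensional, a $2$-dimensional negative space suffices to deform. The resulting contradiction gives, for a sequence $\theta_n\to0^+$, points $u_n$ satisfying (i)--(iii) together with $\tilde m_{\theta_n}(u_n)\le1$.
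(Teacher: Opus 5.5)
Your Steps 1--3 are sound. They are Jeanjean's monotonicity argument transplanted to $S_\mu$, and the bound $B(P_n(t))\le -c_\tau'+3$ together with \eqref{b:1} does confine the almost-maximal points of the near-optimal paths to a fixed ball. The paper itself gives no proof of this theorem: it quotes it from Borthwick--Chang--Jeanjean--Soave. There it is obtained by the monotonicity trick followed by a localized, constrained version of the Fang--Ghoussoub second-order deformation, so your overall route is the right one.

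\emph{The gap is Step 4}, which carries the whole content of (iv) and is only described heuristically. Your expansion is centred at the almost-critical point $u$ and holds only along $Y_u$. Path points near $u$ have components transversal to $Y_u$, on which $D^2F_\tau(u)$ may be positive or degenerate, and $F_\tau(u)$ may exceed $c_\tau$. So the expansion does not give the claimed ``locally $S^1$-like'' structure of $\{F_\tau<c_\tau-\eta\}$; that is the Morse-lemma picture at a nondegenerate critical point, which is unavailable here. More seriously, gluing local reroutings by a partition of unity does not work. Averaging two pushes that go round the point on opposite sides of the plane can give zero displacement exactly where a decrease is needed, and in the cut-off zones an averaged push may go uphill. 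Nothing in this construction actually uses $\dim Y_u=2$. If it worked, it would work equally with one negative direction and give $\tilde m_{\theta_n}(u_n)=0$. That is false already for the unconstrained model $F(x,y)=y^2-x^2$ on $\mathbb{R}^2$ with endpoints $(\mp1,0)$, since every admissible path meets $\{x=0\}$, where $F\ge0=c$.

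What is missing is a \emph{continuous} choice of push with a \emph{definite} decrease; this is where the dimension count enters, topologically rather than through averaging. Work in a chart where, by the H\"older continuity of $F_\tau''$ on bounded sets, $D^2F_\tau\le-\frac{\theta}{2}\|\cdot\|^2$ on a fixed (transported) plane $Y$. Let $g(t)\in Y$ be the $Y$-component of the gradient at the path point $P_n(t)$. Because the parameter set is one-dimensional and $Y$ is two-dimensional, $g$ can be perturbed uniformly by at most $\eta$ into a nowhere-vanishing $\hat g$. Then $\sigma(t)=-\rho\,\hat g(t)/|\hat g(t)|$ is continuous, has norm $\rho$, and satisfies $F_\tau(\exp_{P_n(t)}\sigma(t))\le F_\tau(P_n(t))+\rho\eta-\frac{\theta}{4}\rho^2$. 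This is a definite decrease once $\eta\ll\theta\rho$. With a single negative direction, $g$ changes sign across the saddle (as in the model above) and no such $\hat g$ exists. In general this is the dimension-theoretic fact exploited by Fang--Ghoussoub: a continuous map from a compact set of dimension $\le n$ into $\mathbb{R}^{n+1}$ can be uniformly perturbed to omit $0$. These local pushes must then be performed successively over a finite cover of the compact top part of the path, not averaged. The total displacement must stay small compared with the chart radius, so that the path remains in $B(0,R_0+1)$ and the first-order decrease of Step 3 is not undone. Until this is carried out, (iv) is asserted rather than proved.
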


By Theorem \ref{bt:1} $(ii)$--$(iii)$ and \cite[Remark 1.3]{borthwick2024bounded},
\begin{align*}F_{\tau}'(u_n)+\lambda_n (u_n,\cdot)_{L^2(G)}\to 0\quad \text{in the dual of } H^1(G), \text{ as } n\to +\infty, \end{align*}
where
\begin{equation}\label{b:4}
	\lambda_n =-\frac{1}{\mu}\,\left(F_{\tau}'(u_n)\cdot u_n\right).
\end{equation}
Theorem \ref{bt:1} $(iv)$ gives directly that whenever a subspace
$Y_n\subset T_{u_n}S_{\mu}$ satisfies
\begin{equation}\label{b:5}
	D^2F_{\tau}(u_n)[w,w]=F_{\tau}''(u_n)[w,w]+\lambda_n (w,w)_{L^2(G)}
	<-\theta_n \|w\|^2_{H^1(G)},\qquad \forall\, w\in Y_n\setminus\{0\},
\end{equation}
then necessarily $\dim Y_n \leq 1.$ We refer to $\{\lambda_n\}_{n\in\mathbb{N}}$ from \eqref{b:4} as the sequence of almost Lagrange multipliers.

We shall later see that if $\{u_n\}\subset S_\mu$ converges to some $u\in S_\mu$, then Morse-index information on $u$ as a constrained critical point can be recovered. As a preliminary step, we record properties of the almost Lagrange
multipliers $\{\lambda_n\}\subset \mathbb{R}$.

\begin{lemma}\label{bl:1}
	Let $\{u_n\}\subset S_\mu$, $\{\lambda_n\}\subset \mathbb{R}$, and $\{\theta_n\}\subset \mathbb{R}^+$ satisfy $\theta_n\to 0^+$. Suppose:
	\begin{enumerate}
		\item[(i)] For all sufficiently large $n\in\mathbb{N}$, every subspace $Y_n\subset H^1(G)$ for which
		\begin{equation}\label{b:6}
			F_\tau''(u_n)[\varphi,\varphi]+\lambda_n \|\varphi\|^2_{L^2(G)} < 
			-\theta_n \|\varphi\|^2_{H^1(G)}, \qquad \forall\, \varphi\in Y_n\setminus\{0\},
		\end{equation}
		has dimension at most two, i.e.\ $\dim Y_n \leq 2.$
		\item[(ii)] There exist $\lambda\in\mathbb{R}$, a subspace
		$Y\subset H^1(G)$ with $\dim Y\geq 3$, and $a>0$ such that, for all sufficiently large $n\in\mathbb{N}$,
		\begin{equation}\label{b:7}
			F_\tau''(u_n)[\varphi,\varphi] +\lambda \|\varphi\|^2_{L^2(G)} \leq -a\|\varphi\|^2_{H^1(G)}, \qquad \forall\, \varphi\in Y.
		\end{equation}
	\end{enumerate}
	Then $\lambda_n>\lambda$ for all sufficiently large $n\in\mathbb{N}$. In particular, if \eqref{b:7} holds for every $\lambda<0$, then $\liminf_{n\to\infty}\lambda_n \geq 0.$
\end{lemma}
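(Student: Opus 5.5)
We argue by contradiction. Suppose that $\lambda_n\le\lambda$ along a subsequence, still denoted $\{u_n\}$, with $n$ large enough that both (i) and (ii) hold. We then use the fixed subspace $Y$ from (ii) to build, for each such $n$, a subspace satisfying \eqref{b:6} whose dimension exceeds two. This contradicts (i).

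Fix $\varphi\in Y\setminus\{0\}$. Since $\lambda_n\le\lambda$, we have $\lambda_n\|\varphi\|^2_{L^2(G)}\le\lambda\|\varphi\|^2_{L^2(G)}$. Combining this with \eqref{b:7},
\begin{align*}
F_\tau''(u_n)[\varphi,\varphi]+\lambda_n\|\varphi\|^2_{L^2(G)}
\le F_\tau''(u_n)[\varphi,\varphi]+\lambda\|\varphi\|^2_{L^2(G)}
\le -a\|\varphi\|^2_{H^1(G)}.
\end{align*}
Because $\theta_n\to0^+$, for $n$ large we have $\theta_n<a$. Hence $-a\|\varphi\|^2_{H^1(G)}<-\theta_n\|\varphi\|^2_{H^1(G)}$ for every $\varphi\ne0$, so the inequality above is strict against $\theta_n$. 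Thus $Y_n:=Y$ satisfies \eqref{b:6} with $\dim Y\ge3>2$, contradicting (i). Therefore $\lambda_n>\lambda$ for all sufficiently large $n$.

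If $\dim Y$ is infinite, no truncation is needed: the statement of (i) concerns every subspace satisfying \eqref{b:6}, and $Y$ itself is one. If one prefers to apply (i) only to finite-dimensional subspaces, one may instead take any three-dimensional subspace of $Y$, and the same argument applies.

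For the final claim, suppose \eqref{b:7} holds for every $\lambda<0$, with $Y$ and $a$ possibly depending on $\lambda$. Fix any $\lambda<0$. The first part gives $\lambda_n>\lambda$ for all large $n$, so $\liminf_{n}\lambda_n\ge\lambda$. Letting $\lambda\to0^-$ yields $\liminf_{n}\lambda_n\ge0$.

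The only delicate point is the order of quantifiers, namely that "for $n$ large" in (i) and (ii) and the condition $\theta_n<a$ can be imposed simultaneously. This is immediate, since each is a tail condition in $n$ and $a$ is fixed independently of $n$.
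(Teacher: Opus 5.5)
Your proof is correct and follows the paper's argument. You assume $\lambda_n\le\lambda$ along a subsequence, combine this with \eqref{b:7} and $\theta_n<a$ to show that the fixed subspace $Y$ itself satisfies \eqref{b:6}, and contradict (i) because $\dim Y\ge 3$. Your explicit passage $\lambda\to0^-$ for the final assertion, and your remark about infinite-dimensional $Y$, only spell out details the paper leaves implicit.
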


\begin{proof}
	Assume, for contradiction, that $\lambda_n \leq \lambda$ along a subsequence. By \eqref{b:7}, for every $\varphi \in Y\setminus\{0\}$,
	\begin{align*}
		F_{\tau}''(u_n)[\varphi,\varphi] +\lambda_n \|\varphi\|^2_{L^2(G)}
		&= F_{\tau}''(u_n)[\varphi,\varphi]
		+\lambda \|\varphi\|^2_{L^2(G)} +(\lambda_n-\lambda)\|\varphi\|^2_{L^2(G)} \\
		&\leq -a\|\varphi\|^2_{H^1(G)} +(\lambda_n-\lambda)\|\varphi\|^2_{L^2(G)}.
	\end{align*}
	Since $\lambda_n-\lambda\leq 0$,
	\begin{align*}F_{\tau}''(u_n)[\varphi,\varphi]+\lambda_n \|\varphi\|^2_{L^2(G)} \leq -a\|\varphi\|^2_{H^1(G)}, \qquad \forall\, \varphi\in Y\setminus\{0\}.\end{align*}
	As $\theta_n\to 0^+$, there is $n_0\in\mathbb{N}$ with $\theta_n<a$ for all $n\geq n_0.$ Hence, for $n\geq n_0$ and every $\varphi\in Y\setminus\{0\}$,
	\begin{align*}F_{\tau}''(u_n)[\varphi,\varphi] +\lambda_n \|\varphi\|^2_{L^2(G)} < -\theta_n\|\varphi\|^2_{H^1(G)}.\end{align*}
	So $Y$ satisfies \eqref{b:6}. Since $\dim Y\geq 3$, this contradicts $(i)$, under which any such subspace has dimension at most two. Therefore $\lambda_n>\lambda$ for all sufficiently large $n\in\mathbb{N}$, and the final
	assertion follows.   \qed
\end{proof}

\section{Existence of Mountain-Pass Solutions for the Approximating Problems}\label{bs:2}

Theorem \ref{bt:1} is central to the proof of Theorem \ref{bt:2}. To place the family $E_{W,\tau}(\cdot, G)$ inside its framework, we first show that this family has a mountain pass geometry on $H^1_{\mu}({G})$, uniformly for $\tau\in\left[\frac12,1\right]$.

\begin{lemma}\label{bl:2}
	Let $W$ satisfy assumptions $(W1)$--$(W4)$. Then, for each \(\mu>0\), there exist \(w_1,w_2\in S_\mu\), independent of \(\tau\in\left[\frac12,1\right]\), such that
	\begin{align*}c_{W,\tau} = \inf_{P\in\mathbb{P}}\, \max_{t\in[0,1]} E_{W,\tau}(P(t),{G}) > \max\bigl\{ E_{W,\tau}(w_1,{G}), E_{W,\tau}(w_2,{G})\bigr\}, \qquad \forall\, \tau\in\left[\frac12,1\right],\end{align*}
	where
	\begin{align*}\mathbb{P}= \{ P\in C([0,1],S_{\mu}) :\, P(0)=w_1,\ P(1)=w_2 \}.\end{align*}
\end{lemma}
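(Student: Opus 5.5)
We follow the scheme of \cite{borthwick2023normalized}: we separate a ``low'' region of $S_\mu$, where the kinetic energy is small, from a ``high'' region, by means of a barrier estimate uniform in $\tau$.

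\textbf{Barrier estimate.} The Gagliardo--Nirenberg inequality on $G$ \cite{adami2016threshold, tentarelli2016nls} gives, for $u\in S_\mu$,
\[
\int_{\mathcal K}|u|^p\,dx\le \|u\|_{L^p(G)}^p\le C_p\,\mu^{\frac{p+2}{4}}\,\|u'\|_{L^2(G)}^{\frac{p-2}{2}} .
\]
By $(W2)$ and $\tau\le 1$,
\[
E_{W,\tau}(u,G)\ \ge\ \tfrac12\|u'\|_2^2-\tfrac{C_p}{p}\mu^{\frac{p+2}{4}}\|u'\|_2^{\frac{p-2}{2}} .
\]
Since $p>6$, the exponent satisfies $\frac{p-2}{2}>2$. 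Hence there are $k_0>0$ and $k_1>k_0$, both depending only on $\mu$ and $p$, with
\[
\inf\bigl\{E_{W,\tau}(u,G):\ u\in S_\mu,\ \|u'\|_2^2=k_1\bigr\}\ \ge\ \tfrac{k_1}{4}\ \ge\ 2k_0 \qquad\text{for all }\tau\in[\tfrac12,1].
\]
We will also use the upper bound
\[
E_{W,\tau}(u,G)\le \tfrac12\|u'\|_2^2+\tfrac12\int_G W|u|^2\,dx .
\]

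\textbf{Low endpoint $w_1$.} This is where $(W3)$ enters. Fix a half-line $\ell$ and a nonnegative $\varphi\in C_c^\infty(0,\infty)$. Set $\varphi_R(x)=c_R\,\varphi\bigl((x-R)/R\bigr)$ on $\ell$, extended by zero to the rest of $G$, with $c_R$ chosen so that $\varphi_R\in S_\mu$. Then $c_R^2\sim R^{-1}$, so $\|\varphi_R'\|_2^2=O(R^{-2})\to0$. The support of $\varphi_R$ lies in $[R,\infty)$, hence
\[
\int_G W\varphi_R^2\,dx\le \mu\sup_{x\ge R}W_\ell(x)\to0
\]
by $(W3)$. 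Moreover $\varphi_R$ vanishes on $\mathcal K$. Choosing $R$ large, $w_1=\varphi_R$ satisfies $\|w_1'\|_2^2<k_0$ and $E_{W,\tau}(w_1,G)<k_0$ for every $\tau$.

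\textbf{High endpoint $w_2$.} Take $\psi\in H^1(G)$, positive, supported in the interior of one bounded edge $e\subset\mathcal K$, with $\psi\in S_\mu$. On $e\cong[0,\ell_e]$, for $s\ge1$, concentrate $\psi$ at an interior point $x_0$ by the mass-preserving dilation
\[
\psi_s(x)=s^{1/2}\psi\bigl(x_0+s(x-x_0)\bigr).
\]
Its support stays inside $e$. As $s\to\infty$,
\[
\|\psi_s'\|_2^2=s^2\|\psi'\|_2^2,\qquad \int_G W\psi_s^2\,dx\le\|W\|_\infty\mu \ \text{ by }(W1),\qquad \int_{\mathcal K}|\psi_s|^p\,dx=s^{\frac{p-2}{2}}\|\psi\|_p^p .
\]
Since $\frac{p-2}{2}>2$ and $\tau\ge\frac12$,
\[
E_{W,\tau}(\psi_s,G)\le \tfrac{s^2}{2}\|\psi'\|_2^2+\tfrac12\|W\|_\infty\mu-\tfrac{s^{(p-2)/2}}{2p}\|\psi\|_p^p\to-\infty
\]
uniformly in $\tau$. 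Fix $s$ large so that $\|\psi_s'\|_2^2>k_1$ and $E_{W,\tau}(\psi_s,G)<0$ for all $\tau\in[\frac12,1]$, and set $w_2=\psi_s$.

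\textbf{Conclusion.} Any $P\in\mathbb P$ is continuous into $H^1(G)$, and $t\mapsto\|P(t)'\|_2^2$ goes from a value below $k_0<k_1$ at $t=0$ to a value above $k_1$ at $t=1$. By the intermediate value theorem it meets the sphere $\|u'\|_2^2=k_1$. Therefore
\[
\max_{t\in[0,1]}E_{W,\tau}(P(t),G)\ge 2k_0,\qquad\text{so}\qquad c_{W,\tau}\ge2k_0>\max\{E_{W,\tau}(w_1,G),E_{W,\tau}(w_2,G)\}.
\]

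\textbf{Main difficulty.} The only delicate point is the construction of $w_1$. Because of the potential, a small-gradient function with mass $\mu$ need not have small energy. It must spread out far along a half-line, and the smallness of $\int_G W|w_1|^2$ comes only from $(W3)$. One must also keep track that $k_0$ and $k_1$ depend only on $\mu$ and $p$, and not on $W$ or $\tau$; this is what makes the estimates uniform. Assumption $(W4)$ is not needed for this lemma.
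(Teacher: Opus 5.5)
Your proposal is correct and follows essentially the same route as the paper. The ingredients match: a Gagliardo--Nirenberg barrier on a level set of $\|u'\|_{L^2(G)}^2$, uniform in $\tau$ thanks to $(W2)$; a low endpoint whose mass is spread far out along a half-line, so that $(W3)$ makes $\int_G W|w_1|^2$ small; a high endpoint concentrated inside a bounded edge, whose energy tends to $-\infty$ by $(W1)$ and $p>6$; and the intermediate value theorem along paths. The differences are cosmetic: you use separate levels $k_0<k_1$ with the explicit bound $k_1/4$, and the translated--dilated bump $c_R\varphi((x-R)/R)$ instead of $t^{1/2}v(tx)$. Your remark that $(W4)$ is not needed for this lemma is consistent with the paper's proof.
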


\begin{proof}
	For any \(\mu,k>0\), put
	\begin{align*}M_{\mu,k} = \left\{ u\in H^1_\mu(G) \;\middle|\;
	\int_{G} |u'(x)|^2\,dx < k \right\},
	\qquad \partial M_{\mu,k} = \left\{ u\in H^1_\mu(G)
	\;:\; \int_{G}|u'(x)|^2\,dx = k \right\}.\end{align*}
	Because \(G\) is noncompact, $M_{\mu,k}\neq \emptyset$ and
	$\partial M_{\mu,k}\neq\emptyset$ for all \(\mu,k>0\). To see this, take \(w\in C_c^\infty(\mathbb R^+)\) with $\|w\|_{L^2(\mathbb R^+)}^2=\mu$, and set $w_t(x)=t^{1/2}w(tx)$, $t>0.$ Then $\|w_t\|_{L^2(\mathbb R^+)}^2=\mu$ and $\|w_t'\|_{L^2(\mathbb R^+)}^2 = t^2\|w'\|_{L^2(\mathbb R^+)}^2$ for all $t>0.$ Since \(G\) has at least one half-line, it contains intervals of arbitrarily large length, so each \(w_t\) can be viewed as an element of \(H^1_\mu(G)\), still written \(w_t\), supported entirely inside \(\ell_1\). In particular, $w_t\in M_{\mu,k}$ for \(t>0\) small. Choosing instead $t=\sqrt{k}/\|w'\|_{L^2(\mathbb R^+)}$ gives $\|w_t'\|_{L^2(\mathbb R^+)}^2=k$, so $w_t\in \partial M_{\mu,k}.$
	
	By the Gagliardo--Nirenberg inequality on metric graphs (Proposition 2.1 in \cite{adami2016threshold}),
	\begin{equation}\label{b:8}
		E_{W,\tau}(u, G) \geq \frac{1}{2}\|u'\|_{L^2(G)}^{2} +\frac{1}{2} \int_G W(x) |u|^2 \,dx - \tau\,\frac{C_p}{p}\, \mu^{\frac{p+2}{4}} \|u'\|_{L^2(G)}^{\frac{p}{2}-1}, \qquad \forall\, u\in H^1_{\mu}(G).
	\end{equation}
	From \eqref{b:8}, for every \(\mu>0\) and every \(u\in \partial M_{\mu,k_0}\), with $k_0 = \frac{1}{2} \left(\frac{p}{2C_p}\right)^{\frac{4}{p-6}} \mu^{-\frac{p+2}{p-6}}$, we obtain
	\begin{align*}
	\inf_{u\in \partial M_{\mu,k_0}} E_{W,\tau}(u,G)
	\geq k_0 \left( \frac{1}{2} - \frac{C_p}{p} \mu^{\frac{p+2}{4}} k_0^{\frac{p-6}{4}} \right) + \frac{1}{2} \int_G W(x) |u|^2 \,dx.
	\end{align*}
	By $(W2)$ we have $\frac{1}{2}\int_G W(x) |u|^2 \, dx \ge 0$, hence
	\begin{equation}\label{b:9}
		\inf_{u\in \partial M_{\mu,k_0}} E_{W,\tau}(u, G)
		\geq k_0 \left( \frac{1}{2} - \frac{C_p}{p} \mu^{\frac{p+2}{4}} k_0^{\frac{p-6}{4}} \right) = \alpha >0,
	\end{equation}
	for every $\tau\in\left[\frac12,1\right]$. On the other hand,
	\begin{align*}	E_{W, \tau}(u,{G}) \leq \frac12\int_{{G}} |u'|^2\,dx 	+\frac12\int_{{G}} W(x)|u|^2\,dx, \end{align*}
	so that, if $u\in M_{\mu,k}$, then
	\begin{align*} 	E_{W,\tau}(u,{G}) \leq 	\frac{k}{2} 	+\frac12\int_{{G}} W(x)|u|^2\,dx. 	\end{align*} 	

	We now construct $w_1$ explicitly, as a bump spreading along the half-line $\ell_1$. Let $v\in C_c^\infty(\mathbb R^+)$ with $\|v\|_{L^2(\mathbb R^+)}^2=\mu$, and for $t>0$ set $v_t(x)=t^{1/2}v(tx)$, regarded as an element of $H^1_\mu(G)$ supported in $\ell_1$. Then$\|v_t\|_{L^2(G)}^2=\mu$ and
	$\|v_t'\|_{L^2(G)}^2=t^2\|v'\|_{L^2(\mathbb R^+)}^2\to 0$ as $t\to0^+$. The support of $v_t$ escapes to infinity along $\ell_1$ as $t\to0^+$, so by assumption $(W3)$,
	\begin{align*}\int_G W(x)|v_t|^2\,dx \;\le\; \Big(\sup_{x\ge a/t} W(x)\Big)\,\mu \;=\; \mu\,\eta(t)\;\xrightarrow[t\to0^+]{}\;0,\end{align*}
	where $\operatorname{supp} v\subset[a,b]\subset(0,\infty)$ and
	$\eta(t):=\sup_{x\ge a/t} W(x)\to0$. Since the nonlinear term is nonnegative,
	\begin{align*}E_{W,\tau}(v_t,G)\;\le\;\frac12\|v_t'\|_{L^2(G)}^2+\frac{\mu}{2}\,\eta(t),
	\qquad\forall\,\tau\in\Big[\tfrac12,1\Big],\end{align*}
	and both terms vanish as $t\to0^+$. So there is $t_1>0$ small enough that, setting $w_1:=v_{t_1}$, one has $\|w_1'\|_{L^2(G)}^2<k_0$ and
	\begin{equation}\label{b:10}
		E_{W,\tau}(w_1,G)\;\le\;\frac{\alpha}{2},\qquad\forall\,\tau\in\Big[\tfrac12,1\Big].
	\end{equation}
	
	For $w_2$, let \(e_1\) be a bounded edge of \( G\), identified with \([-\ell_1/2,\ell_1/2]\). Any compactly supported \(w\in H^1([-\ell_1/2,\ell_1/2])\) with $\|w\|_{L^2(e_1)}^2=\mu$ may be regarded as an element of \(H^1_\mu(G)\) by extension by zero outside \(e_1\). For
	\(t>1\), set $w_t(x)=t^{1/2}w(tx).$ One checks that \(w_t\in H^1_\mu(G)\), the support of \(w_t\) concentrating inside \(e_1\) as \(t\to\infty\). The energy reads
	\begin{align*}
	E_{W,\tau}(w_t, G) = \frac{t^2}{2}\int_{e_1} |w'(x)|^2\,dx
	+ \frac12 \int_{e_1} W\!\left(\frac{x}{t}\right)|w(x)|^2\,dx
	- \frac{\tau\, t^{\frac{p-2}{2}}}{p}\int_{e_1} |w(x)|^p\,dx,
	\end{align*}
	for every $\tau\in\left[\frac12,1\right].$ Since \(p>6\) and, by $(W1)$, $W\in L^\infty$, the right-hand side tends to \(-\infty\) as \(t\to+\infty\). There is therefore \(t_2>0\) large enough that $\|w_t'\|_{L^2( G)}^2 = t^2\|w'\|_{L^2( G)}^2 >2k_0 $ and $E_{W,\tau}(w_t, G)<0$ for all $t>t_2$ and all $\tau\in\left[\frac12,1\right]$. We set $w_2=w_{2t_2}$. By construction,
	\begin{equation}\label{b:11}
		\|w_2'\|_{L^2(G)}^2>2k_0 \qquad\text{and}\qquad
		E_{W,\tau}(w_2, G)<0, \quad \forall\,\tau\in\left[\frac12,1\right].
	\end{equation}
	
	Let \(\mathbb{P}\) and \(c_{W,\tau}\) be as in the statement, for the above \(w_1,w_2\). That \(\mathbb{P}\neq\emptyset\) is clear, since
	\begin{align*}P_0(t) = \frac{\mu^{1/2}\bigl((1-t)w_1+t w_2\bigr)} {\|(1-t)w_1+t w_2\|_{L^2(G)}}, \qquad t\in[0,1],\end{align*}
	lies in \(\mathbb{P}\). By \eqref{b:10} and \eqref{b:11}, for each \(P\in\mathbb{P}\) there is \(t_P\in[0,1]\) with $P(t_P)\in \partial M_{\mu,k_0}$, by continuity. Hence, for every \(\tau\in\left[\frac12,1\right]\) and \(P\in\mathbb{P}\),
	\begin{align*}\max_{t\in[0,1]}E_{W,\tau}(P(t),G) \geq
	E_{W,\tau}(P(t_P),G) \geq \inf_{u\in\partial M_{\mu,k_0}} E_{W,\tau}(u,G) \geq \alpha,\end{align*}
	the last step by \eqref{b:9}. Thus $c_{W,\tau}\geq \alpha.$ Meanwhile, by \eqref{b:10} and \eqref{b:11},
	\begin{align*}\max\bigl\{ E_{W,\tau}(w_1,G), E_{W,\tau}(w_2, G) \bigr\} = E_{W,\tau}(w_1, G) < \frac{\alpha}{2},\end{align*}
	so that
	\begin{align*}c_{W,\tau} >\max\bigl\{ E_{W,\tau}(w_1,G), E_{W,\tau}(w_2, G)
	\bigr\},\end{align*}
	which completes the proof.    \qed
\end{proof}

\begin{rem}
	Assumption $(W3)$ is used decisively in the construction of $w_1$. The bump $v_t(x)=t^{1/2}v(tx)$ has support
	$\{x\in\ell_1: tx\in\operatorname{supp}v\}$, which escapes to infinity along $\ell_1$ as $t\to0^+$, so that
	\begin{align*}\int_G W(x)|v_t|^2\,dx
	\;\le\;\Big(\sup_{x\ge a/t} W(x)\Big)\,\mu\;=\;\mu\,\eta(t)\;\xrightarrow[t\to0^+]{}\;0.\end{align*}
	Were $W$ not to vanish at infinity, say $W(x)\ge W_\infty>0$ for all large $x$ along $\ell_1$, then
	\begin{align*}\int_G W(x)|v_t|^2\,dx\;\xrightarrow[t\to0^+]{}\;W_\infty\,\mu\;\neq\;0,\end{align*}
	the energy $E_{W,\tau}(v_t,G)$ could no longer be pushed below
	$\tfrac{\alpha}{2}$, and the mountain-pass geometry of Lemma \ref{bl:2} would fail. The vanishing of $W$ along the half-lines is thus essential twice over: here, in placing $w_1$ inside the potential well, and in Lemma \ref{bl:3}, where
	the same property controls $\int_G W|\varphi|^2$ for test functions supported far
	out on $\ell_1$.
\end{rem}

\begin{rem}
	The bound $E_{W,\tau}(w_1,G)\le\tfrac{\alpha}{2}$ in \eqref{b:10} is stronger than the mountain-pass argument requires. For the conclusion of Lemma \ref{bl:2} it
	suffices that $E_{W,\tau}(w_1,G)<\alpha$, i.e.\ that $w_1$ lie strictly below the barrier level $\alpha$ given in \eqref{b:9}. Since the construction above yields
	$E_{W,\tau}(v_t,G)\to 0$ as $t\to0^+$ uniformly in $\tau\in[\tfrac12,1]$, and $\alpha>0$, any sufficiently small $t_1>0$ furnishes a function $w_1:=v_{t_1}$
	with $E_{W,\tau}(w_1,G)<\alpha$. The explicit choice $\tfrac{\alpha}{2}$ is retained only for definiteness.
\end{rem}

The next result, together with Lemma \ref{bl:1}, is essential for the convergence of the Palais--Smale sequences produced by Theorem \ref{bt:1}.

\begin{lemma}\label{bl:3}
	Let $W$ satisfy $(W1)$--$(W4)$. Then, for every \(\lambda<0\), there exists a subspace \(Y\subset H^1(G)\) with \(\dim Y=3\) such that
	\begin{equation}\label{b:12}
		\int_{G}|w'(x)|^2\,dx + \int_G W(x)|w(x)|^2 \, dx + \lambda\int_{G}|w(x)|^2\,dx \leq \frac{\lambda}{2}\,\|w\|_{H^1(G)}^2,
		\qquad \forall\, w\in Y.
	\end{equation}
\end{lemma}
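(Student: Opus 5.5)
We prove this by placing three functions with disjoint supports far out on the half-line $\ell_1$, each with very small kinetic energy. On each of them the potential term will be negligible by $(W3)$, so the negative term $\lambda\int|w|^2$ dominates. Disjoint supports make all cross terms vanish, so it will suffice to verify \eqref{b:12} on each generator and then extend to $Y$ by orthogonality.

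\emph{Construction.} Fix $\phi\in C_c^\infty(\mathbb R)$, $\phi\not\equiv 0$, with $\operatorname{supp}\phi\subset(0,1)$. For $R>0$ and $j=1,2,3$, set
\[
\phi_{j,R}(x)=\phi\Bigl(\tfrac{x}{R}-j\Bigr)
\]
on $\ell_1\simeq[0,+\infty)$, extended by zero to the rest of $G$. These functions lie in $H^1(G)$, since they vanish near the vertex of $\ell_1$. Their supports are contained in $(jR,(j+1)R)$, hence pairwise disjoint and contained in $[R,+\infty)$. A change of variables gives
\[
\|\phi_{j,R}\|_{L^2}^2=R\|\phi\|_{L^2}^2,\qquad \|\phi_{j,R}'\|_{L^2}^2=R^{-1}\|\phi'\|_{L^2}^2 .
\]
For the potential term, $(W3)$ gives
\[
\int_G W|\phi_{j,R}|^2\le \eta(R)\,\|\phi_{j,R}\|_{L^2}^2,\qquad \eta(R):=\sup_{x\ge R}W_{\ell_1}(x)\to 0 .
\]
Only $(W2)$ (so that $\eta\ge 0$ is meaningful) and $(W3)$ are needed here; $(W4)$ plays no role.

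\emph{Estimate on $Y$.} Let $Y=\operatorname{span}\{\phi_{1,R},\phi_{2,R},\phi_{3,R}\}$, which has dimension $3$. Write $w=\sum_j c_j\phi_{j,R}$. By disjointness of supports, each of $\int|w'|^2$, $\int W|w|^2$ and $\int|w|^2$ splits as $\sum_j c_j^2$ times the corresponding quantity for $\phi_{j,R}$. Moreover, for each generator, $\|\phi_{j,R}'\|_{L^2}^2=R^{-2}\|\phi\|_{L^2}^{-2}\|\phi'\|_{L^2}^2\,\|\phi_{j,R}\|_{L^2}^2$. Setting $\varepsilon(R):=R^{-2}\|\phi'\|_{L^2}^2/\|\phi\|_{L^2}^2$, we therefore get, for all $w\in Y$,
\[
\int|w'|^2\le\varepsilon(R)\int|w|^2,\qquad \int W|w|^2\le \eta(R)\int|w|^2 .
\]
Using $\|w\|_{H^1}^2=\int|w'|^2+\int|w|^2$, inequality \eqref{b:12} rearranges to
\[
\Bigl(1-\tfrac{\lambda}{2}\Bigr)\int|w'|^2+\int W|w|^2+\tfrac{\lambda}{2}\int|w|^2\le 0 .
\]
By the two bounds above, the left-hand side is at most
\[
\Bigl[\Bigl(1+\tfrac{|\lambda|}{2}\Bigr)\varepsilon(R)+\eta(R)-\tfrac{|\lambda|}{2}\Bigr]\int|w|^2 .
\]
Since $\lambda<0$ is fixed and $\varepsilon(R),\eta(R)\to0$ as $R\to\infty$, the bracket is negative for $R$ large, which proves \eqref{b:12}.

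\emph{Main obstacle.} The only delicate point is that the $\tfrac{\lambda}{2}\|w\|_{H^1}^2$ on the right-hand side also contains a kinetic part. This is harmless because the kinetic energy is an $O(R^{-2})$ fraction of the mass uniformly on $Y$, so it is absorbed once $R$ is large. That uniformity is exactly what the disjointness of supports, which kills all cross terms, provides.
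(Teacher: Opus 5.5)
Your proof is correct and follows essentially the same route as the paper: three dilated copies of one bump with disjoint supports pushed out along a half-line (your $R$ is the paper's $1/\rho$), kinetic energy an $O(R^{-2})$ fraction of the mass, and the potential term made negligible by $(W3)$. The differences are only cosmetic. You skip the $L^2$ normalization and absorb the kinetic part of $\frac{\lambda}{2}\|w\|_{H^1(G)}^2$ by rearranging, whereas the paper directly bounds the quotient $\frac{\rho^2\alpha+\eta(\rho)+\lambda}{\rho^2\alpha+1}$ by $\frac{\lambda}{2}$.
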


\begin{proof}
	Take \(\varphi\in C_0^\infty(\mathbb{R}^+)\) with $\operatorname{supp}\varphi \subset \left[\frac34,\frac54\right]$ and $\int_0^{+\infty} |\varphi(x)|^2\,dx = 1.$ Writing \(\ell_1\) for an arbitrary half-line of \(G\), for \(\rho>0\) define
	\begin{align*}\psi_{0,\rho}(x)=
	\begin{cases}
		0, & x\in G\setminus \ell_1,\\[4pt]
		\rho^{1/2}\varphi(\rho x), & x\in \ell_1,
	\end{cases}
	\quad \psi_{1,\rho}(x)=
	\begin{cases}
		0, & x\in G\setminus \ell_1,\\[4pt]
		\rho^{1/2}\varphi(\rho x-1), & x\in \ell_1,
	\end{cases}\end{align*}
	\begin{align*}\psi_{2,\rho}(x)=
	\begin{cases}
		0, & x\in  G\setminus \ell_1,\\[4pt]
		\rho^{1/2}\varphi(\rho x-2), & x\in \ell_1.
	\end{cases}\end{align*}
	For each \(\rho>0\) we have $\int_{G} |\psi_{j,\rho}|^2\,dx = 1$ for $j=0,1,2$, and \(\psi_{0,\rho},\psi_{1,\rho},\psi_{2,\rho}\) are pairwise orthogonal in \(H^1(G)\), their supports being disjoint. Assume
	\begin{align*}w=\sum_{j=0}^{2}a_j\psi_{j,\rho},
	\qquad a_0,a_1,a_2\in\mathbb R. \end{align*}
	Using the disjointness of the supports, the kinetic and mass terms factor exactly:
	\begin{align*}\int_{G}|w'(x)|^2\,dx = \rho^2\alpha\sum_{j=0}^{2}a_j^2,
	\qquad\int_{G}|w(x)|^2\,dx = \sum_{j=0}^{2}a_j^2,\qquad
	\alpha := \int_0^{+\infty}|\varphi'(x)|^2\,dx > 0,\end{align*}
	and $\|w\|_{H^1(G)}^2 = (\rho^2\alpha+1)\sum_{j=0}^{2}a_j^2$. For the potential term, each $\psi_{j,\rho}$ is supported in
	$\{x\in\ell_1 : \rho x - j \in \operatorname{supp}\varphi\}\subseteq\{x\ge \tfrac{3}{4\rho}\}$, since $\operatorname{supp}\varphi\subset[\tfrac34,\tfrac54]$ and $j\ge0$. Setting
	\begin{align*}\eta(\rho):=\sup_{x\ge \frac{3}{4\rho}} W(x),\end{align*}
	assumption $(W3)$ gives $\eta(\rho)\to0$ as $\rho\to0^+$, and by disjoint supports, we obtain
	\begin{align*}\int_G W(x)|w(x)|^2\,dx
	= \sum_{j=0}^{2}a_j^2\int_G W(x)|\psi_{j,\rho}(x)|^2\,dx
	\le \eta(\rho)\sum_{j=0}^{2}a_j^2\int_G|\psi_{j,\rho}(x)|^2\,dx
	= \eta(\rho)\sum_{j=0}^{2}a_j^2.\end{align*}
	Combining the three estimates,
	\begin{align*}\int_{G}|w'|^2\,dx + \int_G W(x)|w|^2\,dx + \lambda\int_{G}|w|^2\,dx \le \bigl(\rho^2\alpha + \eta(\rho) + \lambda\bigr)\sum_{j=0}^{2}a_j^2.\end{align*}
	Therefore, for every nontrivial $w\in\operatorname{span}\{\psi_{0,\rho},\psi_{1,\rho},\psi_{2,\rho}\}$,
	\begin{align*}\frac{\displaystyle\int_{G}|w'|^2\,dx + \int_G W(x)|w|^2\,dx + \lambda\int_{G}|w|^2\,dx}
	{\|w\|_{H^1(G)}^2} \le \frac{\rho^2\alpha+\eta(\rho)+\lambda}{\rho^2\alpha+1} \le \frac{\lambda}{2},\end{align*}
	for every sufficiently small $\rho>0$. So \eqref{b:12} holds on \(\operatorname{span}\{\psi_{0,\rho},\psi_{1,\rho},\psi_{2,\rho}\}\). Setting
	$Y=\operatorname{span}\{\psi_{0,\rho},\psi_{1,\rho},\psi_{2,\rho}\}$, we have
	\(\dim Y=3\), completing the proof.   \qed
\end{proof}

	\begin{lemma}\label{bl:7}
	Let $\ell\simeq[0,+\infty)$ be an unbounded edge of $G$ and let $W$ satisfy $(W1)$--$(W4)$. Consider the zero-energy equation on $\ell$,
	\begin{equation}\label{b:57}
		-\varphi''+W(x)\varphi=0, \qquad x\in(0,+\infty).
	\end{equation}
	\begin{enumerate}
		\item[(i)] Every solution of \eqref{b:57} lying in $L^2(\ell)$ is nonincreasing and of one sign near $+\infty$, and satisfies
		\begin{equation}\label{b:58}
		\varphi(x)=\int_x^{+\infty}(t-x)\,W(t)\,\varphi(t)\,dt,
		\qquad \forall\, x \ \text{large}.
		\end{equation}
		\item[(ii)] If $\int_1^{+\infty}xW(x)\,dx<+\infty$, then the only solution of \eqref{b:57} in $L^2(\ell)$ is $\varphi\equiv0$. We then call $\ell$ a \emph{regular} edge.
		\end{enumerate}
	\end{lemma}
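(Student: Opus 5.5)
The plan is to exploit the sign structure of $\varphi''=W\varphi$ given by $(W2)$. Where $\varphi>0$ it is convex, and where $\varphi<0$ it is concave. Solutions are understood in the $H^2_{\rm loc}$ (Carath\'eodory) sense. Since $W\in L^\infty$ by $(W1)$, the linear ODE has unique solutions for given $(\varphi(x_0),\varphi'(x_0))$. In particular, a solution vanishing on an interval, or satisfying $\varphi(x_0)=\varphi'(x_0)=0$, is identically zero.

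\emph{Step 1 (one sign near $+\infty$).} Let $\varphi\not\equiv0$ solve \eqref{b:57} with $\varphi\in L^2(\ell)$. Suppose $a<b$ are consecutive zeros with $\varphi>0$ on $(a,b)$. Then $\varphi$ is convex on $[a,b]$ with zero boundary values, hence $\varphi\le0$ there, which is a contradiction. The case $\varphi<0$ on $(a,b)$ is symmetric, using concavity. Thus $\varphi$ has at most one zero (zeros cannot accumulate, by uniqueness), and it is of one sign on some $[x_0,+\infty)$. Replacing $\varphi$ by $-\varphi$, we assume $\varphi>0$ there.

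\emph{Step 2 (monotonicity and decay of $\varphi'$).} On $[x_0,\infty)$ the function $\varphi$ is convex and positive, so $\varphi'$ is nondecreasing. If $\varphi'(x_1)>0$ for some $x_1$, then $\varphi(x)\ge\varphi(x_1)+\varphi'(x_1)(x-x_1)$, which contradicts $\varphi\in L^2$. Hence $\varphi'\le0$, so $\varphi$ is nonincreasing, and $L^2$-integrability forces $\varphi(x)\to0$. Being monotone and bounded above by $0$, $\varphi'$ has a limit $L\le0$. If $L<0$, then $\varphi\to-\infty$, which contradicts positivity, so $\varphi'(x)\to0$.

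\emph{Step 3 (integral identity).} Integrating $\varphi''=W\varphi$ on $[x,R]$ and letting $R\to\infty$ gives
\[
\varphi'(x)=-\int_x^{\infty}W(t)\varphi(t)\,dt,
\]
where the integral converges because its integrand is nonnegative and its partial integrals equal $\varphi'(R)-\varphi'(x)$, which converge. Integrating once more and using $\varphi(R)\to0$, we obtain $\varphi(x)=\int_x^\infty\int_s^\infty W\varphi\,dt\,ds$. Tonelli's theorem (the integrand is nonnegative) turns this into \eqref{b:58}. This completes (i). This step is the delicate one: the limits $\varphi\to0$ and $\varphi'\to0$ must be established before any integrability of $tW\varphi$ is assumed.

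\emph{Step 4 (part (ii)).} For $x\ge x_0$ and $t\ge x$ we have $0<\varphi(t)\le\varphi(x)$ by Step 2. Hence \eqref{b:58} yields
\[
\varphi(x)\le\varphi(x)\int_x^\infty (t-x)W(t)\,dt\le \varphi(x)\int_x^\infty tW(t)\,dt.
\]
By $(W4)$ the last integral is $<1$ for $x$ large, so $\varphi(x)<\varphi(x)$, which is impossible. Therefore no nontrivial $L^2$ solution exists, i.e.\ $\varphi\equiv0$.
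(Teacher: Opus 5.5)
Your proof is correct and follows essentially the same route as the paper: the convexity argument giving at most one zero, then monotonicity of $\varphi'$ forcing $\varphi\to0$ and $\varphi'\to0$, then two integrations with Fubini/Tonelli giving \eqref{b:58}, and finally the estimate $\varphi(x)\le\theta\,\varphi(x)$ with $\theta<1$ for (ii); your explicit handling of the concave case and of isolated zeros via ODE uniqueness is slightly more careful than the paper's. One cosmetic point: in Step 4 the hypothesis you need is the integrability $\int_1^{+\infty}xW(x)\,dx<+\infty$ on $\ell$ itself, as assumed in (ii), rather than $(W4)$, which only guarantees this on some unbounded edge.
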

	\begin{proof}
	$(i)$ Let $\varphi\in L^2(\ell)$ solve \eqref{b:57}, $\varphi\not\equiv0$. If $\varphi(a)=\varphi(b)=0$ with $0\le a<b$, then on $(a,b)$ we may assume $\varphi>0$, so $\varphi''=W\varphi\ge0$ by $(W2)$ and $\varphi$ is convex on
	$[a,b]$ with vanishing endpoint values, whence $\varphi\le0$ on $[a,b]$, a contradiction. So $\varphi$ has at most one zero and, replacing $\varphi$ by $-\varphi$ if necessary, there is $x_0\ge0$ with $\varphi>0$ on $(x_0,+\infty)$. On $(x_0,+\infty)$ we have $\varphi''\ge 0$, so $\varphi'$ is
	nondecreasing and $\varphi'(x)\to L\in(-\infty,+\infty]$. If $L>0$ then $\varphi(x)\to+\infty$, contradicting $\varphi\in L^2(\ell)$; hence $L\le0$ and therefore $\varphi'\le0$ on $(x_0,+\infty)$. Thus $\varphi$ is positive and nonincreasing, so $\varphi(x)\to c\ge0$, and $\varphi\in L^2(\ell)$ forces $c=0$; if $L<0$ then $\varphi(x)\to-\infty$, so $L=0$. Integrating $\varphi''=W\varphi$ over $(x,R)$ and letting $R\to+\infty$ gives $-\varphi'(x)=\int_x^{+\infty}W\varphi\,dt\ \ (\ge0)$; integrating once more and using Fubini yields \eqref{b:58}.\\
    $(ii)$ Suppose $\varphi\not\equiv0$ and keep the notation of $(i)$. Since $\varphi$ is nonincreasing on $(x_0,+\infty)$, \eqref{b:58} gives, for $x>x_0$,
		\begin{align*}
			\varphi(x)=\int_x^{+\infty}(t-x)W(t)\varphi(t)\,dt
			\;\le\;\varphi(x)\int_x^{+\infty}(t-x)W(t)\,dt
			\;\le\;\varphi(x)\int_x^{+\infty}t\,W(t)\,dt .
		\end{align*}
		By hypothesis $\int_x^{+\infty}tW(t)\,dt\to0$ as $x\to+\infty$, so there is $x_1>x_0$ with $\int_{x_1}^{+\infty}tW(t)\,dt<1$, and the displayed inequality forces $\varphi(x_1)\le\theta\varphi(x_1)$ with $\theta = \int_{x_1}^{+\infty}tW(t)\,dt<1$, i.e. $\varphi(x_1)\le0$, contradicting $\varphi>0$ on $(x_0,+\infty)$. Hence $\varphi\equiv0$.   \qed
	\end{proof}
	\begin{rem}
		The decay required in Lemma \ref{bl:7}$(ii)$ is not merely technical, and it is not implied by $(W1)$--$(W3)$ together with integrability of $W$. Taking
		$W(x)=2(1+x)^{-2}$ on $\ell$, one has $W\in L^\infty(\ell)\cap L^1(\ell)$,
		$W\ge0$ and $xW(x)\to0$ as $x\to+\infty$, so that $(W1)$--$(W3)$ hold; yet $\varphi(x)=(1+x)^{-1}$ satisfies $\varphi''=2(1+x)^{-3}=W\varphi$ with $\varphi>0$ and $\varphi\in H^1(\ell)$. Thus \eqref{b:57} admits a positive $H^1$ solution on $\ell$, and no conclusion on the sign of the multiplier can be drawn from $(W1)$--$(W3)$ alone. Here $\int_1^{+\infty}xW(x)\,dx=+\infty$, so $\ell$ is not regular; the inverse-square decay is exactly the borderline, since $\varphi(x)=(1+x)^{-a}$ with $a>\tfrac12$ solves \eqref{b:57} for
		$W(x)=a(a+1)(1+x)^{-2}$.
	\end{rem}

We close this section with a definition; throughout, \(\chi_A\) denotes the characteristic function of \(A\).

\begin{definition}
	Let \( G\) be a graph, \( G'\subset G\) a subgraph, and $U\in C(G)\cap H^1_{\mathrm{loc}}(G)$, not necessarily in \(H^1(G)\), a solution of
	\begin{align*}
	\begin{cases}
	-U'' + W(x){U} +\lambda U = \tau |U|^{p-2}U\,\chi_{G'} & \text{in } G,\\[4pt]
	\displaystyle\sum_{e\succ v} U'(v)=0,
	& \text{for every vertex } v \in V,
	\end{cases}
	\end{align*}
	with \(\lambda,\tau\in\mathbb R\).  Associated with \(U\), define the quadratic form
	\begin{equation}\label{b:14}
	Q_W(\varphi;U,G) = \int_{G} \Bigl( |\varphi'(x)|^2
	+ \bigl(W(x)+\lambda-(p-1)\tau |U(x)|^{p-2}\chi_{G'}(x)\bigr)
	\varphi^2(x) \Bigr)\,dx,
	\end{equation}
	for every $\varphi\in H^1(G)\cap C_c(G).$
\end{definition}

The Morse index \(m(U)\) of \(U\) is the largest dimension of a subspace $Y \subset H^1(G)\cap C_c(G)$ with $Q_W(\varphi; U, G)<0$ for all $\varphi\in Y\setminus\{0\}.$ When $W\equiv0$ we write $Q$ for the corresponding $W$-free form.

 The main goal of this section is the following.

\begin{prop}\label{bp:1}
	Let \(\mu>0\) be fixed and $W$ satisfy $(W1)$--$(W4)$. Then, for almost every \(\tau\in I_\tau\), there is a pair $(u_\tau,\lambda_\tau)\in H^1_\mu(G)\times \mathbb R^+$ solving
	\begin{equation}\label{b:15}
		\begin{cases}
			-u_\tau''+ W(x)u_\tau+\lambda_\tau u_\tau = \tau\,\chi(x)\,u_\tau^{\,p-1},
			& \text{in }  G,\\[4pt]
			u_\tau>0, & \text{in }G,\\[4pt]
			\displaystyle\sum_{e\succ v}u_{\tau}'(v)=0,
			& \text{for every vertex } v \in V.
		\end{cases}
	\end{equation}
	Moreover, $E_{W,\tau}(u_\tau, G)=c_{W, \tau}$ and $m(u_\tau)\leq 2.$
\end{prop}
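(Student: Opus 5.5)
We will apply Theorem \ref{bt:1} to $F_\tau=E_{W,\tau}(\cdot,G)$ with
\[
A(u)=\tfrac12\int_G|u'|^2+\tfrac12\int_G W|u|^2,\qquad B(u)=\tfrac1p\int_{\mathcal K}|u|^p\ge0,\qquad I=I_\tau=[\tfrac12,1].
\]
Condition \eqref{b:1} holds on $S_\mu$: by $(W2)$ we have $A(u)\ge\frac12\|u'\|^2_{L^2}$, and $\|u\|_{L^2}$ is fixed there. Since $p>6$ and $W\in L^\infty$, $F_\tau'$ and $F_\tau''$ are $\alpha$-Hölder on bounded sets; this follows from $H^1(G)\hookrightarrow L^\infty(G)$ and the local Hölder continuity of $s\mapsto |s|^{p-2}$. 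Lemma \ref{bl:2} supplies the mountain-pass geometry. Hence, for a.e.\ $\tau$, we obtain a bounded Palais--Smale sequence $u_n\in S_\mu$ at level $c_{W,\tau}$, with almost multipliers $\lambda_n$ given by \eqref{b:4} and $\tilde m_{\theta_n}(u_n)\le1$.

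We may take $u_n\ge0$: replacing paths by $|P|$ does not increase $E_{W,\tau}$, so one may rerun the construction along paths of nonnegative functions, as in \cite{borthwick2023normalized}. Alternatively we pass to $|u|$ at the end and use the strong maximum principle.

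Since $\{u_n\}$ is bounded, $\{\lambda_n\}$ is bounded. We extract $\lambda_n\to\lambda_\tau$ and $u_n\rightharpoonup u_\tau$ in $H^1$, with local uniform convergence. The index bound passes to the unconstrained form. Condition \eqref{b:5} on $T_{u_n}S_\mu$ with dimension $\le1$ implies that any subspace of $H^1$ satisfying \eqref{b:6} has dimension $\le2$, since $T_{u_n}S_\mu$ has codimension one. On the other hand, for $\varphi$ supported far out on a half-line we have $F''_\tau(u_n)[\varphi,\varphi]=\int|\varphi'|^2+W\varphi^2$, because the nonlinearity lives only on $\mathcal K$. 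Lemma \ref{bl:3} therefore gives \eqref{b:7} with $a=-\lambda/2$ for each $\lambda<0$, and Lemma \ref{bl:1} yields $\lambda_\tau\ge0$.

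The limit $u_\tau$ solves \eqref{b:15} weakly, and $E_{W,\tau}'(u_n)+\lambda_n u_n\to0$. For compactness we use $\int_{\mathcal K}|u_n|^p\to\int_{\mathcal K}|u_\tau|^p$ (compact core) and $\int W|u_n|^2\to\int W|u_\tau|^2$, which follows from $(W3)$ and the local compactness. Testing with $u_n-u_\tau$ then gives
\[
\|u_n'-u_\tau'\|_{L^2}^2+\lambda_\tau\|u_n-u_\tau\|_{L^2}^2\to0 .
\]
If $\lambda_\tau>0$ this is strong $H^1$ convergence. Then $u_\tau\in S_\mu$, $E_{W,\tau}(u_\tau)=c_{W,\tau}$, and the Morse bound $m(u_\tau)\le2$ follows by a standard perturbation argument: a $3$-dimensional negative subspace for $Q_W(\cdot;u_\tau,G)$ would, by strong convergence, give \eqref{b:6} for $u_n$ with $\dim=3$. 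Positivity follows from $u_\tau\ge0$, $u_\tau\not\equiv0$, and the strong maximum principle on each edge together with the Kirchhoff conditions. Note that $u_\tau\not\equiv0$ because $c_{W,\tau}\ge\alpha>0$ while a vanishing limit would force the nonlinear term to vanish.

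The main obstacle is excluding $\lambda_\tau=0$. In that case we only know $u_n'\to u_\tau'$ in $L^2$, so mass could escape along the half-lines. We argue as follows:
\begin{enumerate}
\item On each half-line the limit $u_\tau$ solves $-u''+Wu=0$ and lies in $L^2$.
\item We need this on every half-line; on edges satisfying $(W4)$, Lemma \ref{bl:7}(ii) forces $u_\tau\equiv0$ there. To handle the remaining half-lines, we use the convexity and monotonicity from Lemma \ref{bl:7}(i): a positive, convex, decreasing function whose value at the vertex is $u_\tau(v)>0$ would have to satisfy $u_\tau'(v)\le0$. Alternatively, we exploit that $u_\tau$ vanishes on a regular half-line to get $u_\tau(v)=u_\tau'(v)=0$ at its vertex. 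Uniqueness for the linear ODE on that edge and propagation through the Kirchhoff conditions, combined with the Cauchy problem $-u''+Wu+\lambda u=\tau u^{p-1}$ on the core, then force $u_\tau\equiv0$ globally.
\item This contradicts $u_\tau\not\equiv0$ (the energy level is $\ge\alpha>0$).
\end{enumerate}
This step, particularly making the unique-continuation propagation through vertices of degree $\ge3$ rigorous, is where I expect the real work. If it fails, the fallback is to show that $\lambda_\tau=0$ contradicts the Palais--Smale identity via $\int_{\mathcal K}|u_\tau|^p=\int|u_\tau'|^2+W u_\tau^2$ combined with the decay estimate \eqref{b:58}.
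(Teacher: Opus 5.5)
Your proposal is correct and follows the paper's proof: Theorem \ref{bt:1} with Lemma \ref{bl:2}, then $\lambda_\tau\ge0$ via Lemmas \ref{bl:3} and \ref{bl:1}, nontriviality from $c_{W,\tau}\ge\alpha>0$, strong convergence once $\lambda_\tau>0$, and exclusion of $\lambda_\tau=0$ by Lemma \ref{bl:7}(ii) on the edge given by $(W4)$. The step you flag as the real work is immediate if you prove $u_\tau>0$ before discussing the sign of $\lambda_\tau$, as the paper does; your positivity argument never uses $\lambda_\tau>0$, and vertices of degree $\ge3$ cause no trouble, since at a vertex where a nonnegative solution vanishes all outgoing derivatives are $\ge0$ and sum to zero by Kirchhoff, so $u_\tau\equiv0$ on that single regular half-line is already a contradiction and the other half-lines, the convexity detour and the fallback are not needed.
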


\begin{proof} 
	We apply Theorem \ref{bt:1} to the family $E_{W,\tau}$, with \(\mathbb{P}\) as in Lemma \ref{bl:2}. Set
	\begin{align*}A(u) = \frac{1}{2} \int_G |u'|^2 \,dx + \frac{1}{2} \int_G W(x)|u|^2 \, dx,  \quad B(u) = \frac{\tau}{p}\int_G |u|^p \, dx.\end{align*}
	By $(W2)$, assumption \eqref{b:1} holds. Writing $E'_{W, \tau}$ and $E''_{W, \tau}$ for the first and second derivatives of $E_{W, \tau}$, both are of class $C^1$ and hence locally H\"older continuous (Definition \ref{bd:1}) on $H^1_\mu(G)$. The remaining hypotheses hold by Lemma \ref{bl:2}. Thus, for almost every $\tau \in [1/2, 1]$, there exist a bounded sequence $\{u_{n,\tau}\} \subset H^1_\mu(G)$, denoted $\{u_n\}$, and $\{\theta_n\} \subset \mathbb{R}^+$ with $\theta_n \to 0^+$ such that
	\begin{equation}\label{b:16}
		E'_{W,\tau}(u_n) + \lambda_n (u_{n}, \cdot) \to 0 \text{ in the dual of } H^1_\mu(G) \text{ as } n \to \infty,
	\end{equation}
	where
	\begin{equation}\label{b:17}
		\lambda_n =-\frac{1}{\mu}\,E_{W, \tau}'(u_n)[u_n].
	\end{equation}
	Moreover, if
	\begin{equation}\label{b:18}
		\int_{G} \Bigl( |\varphi'|^2
		+ \bigl(W(x)+\lambda_n-(p-1)\tau |u_n|^{p-2}\chi(x)\bigr) \varphi^2 \Bigr)\,dx
		= E''_{W,\tau}(u_n)[\varphi, \varphi] + \lambda_n \|\varphi\|^2_{L^2(G)}
		< -\theta_n \|\varphi\|^2_{H^1(G)}
	\end{equation}
	holds for every $\varphi \in Y_n \setminus \{0\}$ in a subspace $Y_n$ of $T_{u_n}S_\mu$, then $\dim Y_n \le 1$. Since $u \in H^1_\mu(G)$ implies $|u| \in H^1_\mu(G)$, since $w_1, w_2 \ge 0$, since the map $u \mapsto |u|$ is continuous, and since $E_{W,\tau}(|u|) = E_{W,\tau}(u)$, we may take $\{u_n\}$ with $u_n \ge 0$ on $G$; see \cite[Remark 1.8]{borthwick2024bounded}.
	
	By \eqref{b:17}, boundedness of $\lambda_n$ implies boundedness of $\{u_n\}$. Passing to a subsequence, $\lim_{n} \lambda_n = \lambda_\tau$ for some $\lambda_\tau \in \mathbb{R}$, and there is $u_\tau \in H^1(G)$ with
	\begin{align}
		u_n &\rightharpoonup u_\tau \quad \text{ in } H^1(G), \label{b:19}\\
		u_n &\to u_\tau \quad \text{ in } L^p_{\mathrm{loc}}(G), \ p>2, \label{b:20}\\
		u_n(x) &\to u_\tau(x) \quad \text{for a.e. } x\in G, \label{b:21}
	\end{align}
	whence $u_\tau \ge 0$. Using \eqref{b:16}, $(W1)$ and $\lambda_n \to \lambda_\tau$, we get
	\begin{equation}\label{b:22}
		\int_G u'_n \varphi' \, dx + \int_G W(x) u_n \varphi\, dx  - \tau \int_G \chi(x)|u_n|^{p-2}u_n \varphi \, dx + \lambda_\tau \int_G u_n \varphi \, dx = o(1)\|\varphi\|_{H^1(G)}, 
	\end{equation}
	for every $\varphi \in H^1(G)$. By \eqref{b:19}--\eqref{b:21}, $u_\tau$ satisfies
	\begin{equation}\label{b:23}
		\begin{cases}
			-u_\tau''+ W(x)u_\tau+\lambda_\tau u_\tau =\tau\,\chi(x)\,u_\tau^{p-1}
			& \text{on } G,\\[2mm]
			\displaystyle\sum_{e\succ v} u'_{\tau}(v)=0
			& \text{for every vertex } v\in V.
		\end{cases}
	\end{equation}
	To prove $u_\tau \not\equiv 0$, we first show $\lambda_\tau\ge 0$. Since $T_{u_n}S_\mu$ has codimension $1$, if \eqref{b:18} holds for all $\varphi \in Y_n \setminus \{0\}$ in a subspace $Y_n \subset H^1(G)$, then $\dim Y_n \le 2$. By Lemma \ref{bl:3}, for every $\lambda<0$ there exist a subspace $Y \subset H^1(G)$ with $\dim Y \ge 3$ and $a>0$ such that, for $n\in\mathbb{N}$ large,
	\begin{align*}
	E''_{W,\tau}{(u_n) [\varphi, \varphi]} + \lambda \|\varphi\|^2_{L^2(G)} &\le \int_{ G}|\varphi'(x)|^2\,dx + \int_G W(x)|\varphi(x)|^2 \, dx + \lambda\int_{G}|\varphi(x)|^2\,dx \\&\leq -a\|{\varphi}\|_{H^1(G)}^2,
	\end{align*}
	for all $\varphi \in Y\setminus\{0\}$. Lemma \ref{bl:1} then gives $\lambda_\tau \ge 0$.
	
	Furthermore, by \eqref{b:23},
	\begin{equation}\label{b3:24}
		\int_G u'_\tau \varphi' \, dx + \int_G W(x) u_\tau\varphi\, dx  - \tau \int_G \chi(x)|u_\tau|^{p-2}u_\tau \varphi \, dx + \lambda_\tau \int_G u_\tau \varphi \, dx =0.
	\end{equation}
	Subtracting \eqref{b:22} from \eqref{b3:24} and using \eqref{b:19}--\eqref{b:21},
	\begin{equation}\label{b:24}
		\int _G |(u_n- u_\tau)'|^2 \, dx + \int_G W(x)|u_n - u_\tau|^2 \, dx + \lambda_\tau \int_G |u_n - u_\tau|^2 \, dx \to 0   \quad\text{ as } n\to \infty.
	\end{equation}
	If $u_\tau \equiv 0$, then
	\begin{equation}\label{b:25}
		\int_G |u'_n|^2 \, dx+ \int_G W(x)|u_n|^2 \, dx + \lambda_\tau \int_G |u_n|^2 \, dx \to 0.
	\end{equation}
	If \(\lambda_\tau>0\), then \((W1)\) gives
	\begin{align*}\int_G |u_n'|^2\,dx +\int_G W(x)|u_n|^2\,dx
	+\lambda_\tau\int_G |u_n|^2\,dx \geq
	\lambda_\tau\int_G |u_n|^2\,dx
	= \lambda_\tau \mu >0,\end{align*}
	so the left side stays away from $0$, contradicting \eqref{b:25}. Thus, if \(u_\tau\equiv 0\), necessarily \(\lambda_\tau=0\). If \(\lambda_\tau=0\), then \eqref{b:25} yields
	\begin{align*}
	\int_G |u_n'|^2\,dx+\int_G W(x)|u_n|^2\,dx \to 0,
	\end{align*}
	and by \((W1)\), $\|u_n'\|_{L^2(G)}\to 0$. By Gagliardo--Nirenberg,
	\begin{align*}
	\|u_n\|_{L^p(G)}^p \leq
	C\,\|u_n\|_{L^2(G)}^{\frac{p+2}{2}}
	\|u_n'\|_{L^2(G)}^{\frac{p-2}{2}}
	\to 0,
	\end{align*}
	and therefore
	\begin{align*}
	E_{W,\tau}(u_n) = \frac12\int_G |u_n'|^2\,dx +\frac12\int_G W(x)|u_n|^2\,dx -\frac{\tau}{p}\int_G \chi(x)|u_n|^p\,dx \to 0.
	\end{align*}
	Since \(E_{W,\tau}(u_n)\to c_{W,\tau}\), we get $c_{W,\tau}=0$, contradicting \(c_{W,\tau}>0\). Hence \(u_\tau \not\equiv 0\).
	
	We now claim \(u_\tau>0\) on \(G\). Suppose $u_\tau(x_0)=0$ for some \(x_0\in G\). If \(x_0\) is interior to an edge, it is a minimum of \(u_\tau\), so $u_\tau'(x_0)=0$. If \(x_0\) is a vertex, nonnegativity of \(u_\tau\) and the Kirchhoff condition force all outgoing derivatives to vanish, i.e. $u_\tau'(x_0)=0$. Since \(u_\tau\) solves \eqref{b:23} on each edge, ODE uniqueness gives $u_\tau\equiv 0$ on any edge through \(x_0\); iterating across the finitely many vertices and edges yields $u_\tau\equiv 0$ on $G$, contradicting $u_\tau\not\equiv 0$. Hence $u_\tau>0$ on $G$.

    It remains to upgrade $\lambda_\tau\ge0$ to $\lambda_\tau>0$. Let \(\ell\) be an unbounded edge, identified with \([0,+\infty)\). Since $\chi\equiv 0$ on \(\ell\), the restriction of \(u_\tau\) to \(\ell\) satisfies
		\begin{align*}
			-u_\tau'' + W(x)\,u_\tau + \lambda_\tau\, u_\tau = 0
			\qquad \text{on } (0,+\infty),
		\end{align*}
	with \(u_\tau>0\) and \(u_\tau\in H^1(\ell)\). Choose for $\ell$ the edge given by $(W4)$, which is regular in
	the sense of Lemma \ref{bl:7}$(ii)$. If $\lambda_\tau=0$, then $u_\tau|_\ell$ is a solution of \eqref{b:57} belonging to $L^2(\ell)$, and Lemma \ref{bl:7}$(ii)$ forces $u_\tau\equiv0$ on $\ell$, contradicting $u_\tau>0$ on $G$. Together with $\lambda_\tau\ge0$ this gives $\lambda_\tau>0$.

	As \(\lambda_\tau>0\), \eqref{b:24} and $(W2)$ give
	\begin{align*}
	\int_G |(u_n-u_\tau)'|^2\,dx
	+ \int_G W(x)|u_n - u_\tau|^2 \, dx +
	\lambda_\tau\int_G |u_n-u_\tau|^2\,dx
	\to 0,
	\end{align*}
	and all terms being nonnegative, $u_n\to u_\tau$ strongly in $H^1(G)$.
	
	Finally, we prove $m(u_\tau)\le 2$. Since \(T_{u_\tau}H^1_\mu(G)\) has codimension \(1\) in \(H^1(G)\), it suffices to show the constrained Morse index of \(u_\tau\) on \(H^1_\mu(G)\) is at most \(1\). Suppose not. By Definition \ref{bd:3}, there is a two-dimensional subspace $Y_0 \subset T_{u_\tau}H^1_\mu(G)$ with
	\begin{equation}\label{b:26}
		D^2E_{W,\tau}(u_\tau)[w,w]<0, \quad \forall\, w\in Y_0\setminus\{0\}.
	\end{equation}
	As \(Y_0\) is finite-dimensional, $S_{Y_0}=\{w\in Y_0:\|w\|_{H^1(G)}=1\}$ is compact, and $w\mapsto D^2E_{W,\tau}(u_\tau)[w,w]$ is continuous and strictly negative there, so there is \(\beta>0\) with
	\begin{align*}
	D^2E_{W,\tau}(u_\tau)[w,w]< -\beta, \quad \forall\, w\in S_{Y_0}.
	\end{align*}
	For \(w\in Y_0\setminus\{0\}\), putting $\tilde w=w/\|w\|_{H^1(G)}$ and using homogeneity of \(D^2E_{W,\tau}(u_\tau)\),
	\begin{align*}
	D^2E_{W,\tau}(u_\tau)[w,w] = \|w\|_{H^1(G)}^2
	D^2E_{W,\tau}(u_\tau)[\tilde w,\tilde w]
	< -\beta \|w\|_{H^1(G)}^2.
	\end{align*}
	Since \({E_{W,\tau}'}\) and \({E_{W,\tau}''}\) are \(\alpha\)-H\"older continuous on bounded subsets of \(H^1(G)\) for some \(\alpha\in(0,1]\), by \cite[Corollary 1]{borthwick2024bounded} there is \(\eta_1>0\) small such that, for every \(v\in S_\mu\) with $\|v-u_\tau\|_{H^1(G)}\leq \eta_1$,
	\begin{equation}\label{b:28}
		D^2{E_{W,\tau}}(v)[w,w] < -\frac{\beta}{2}\|w\|_{H^1(G)}^2, 
		\qquad \forall\, w\in Y_0\setminus\{0\}.
	\end{equation}
	Since \(u_n\to u_\tau\) strongly in \(H^1(G)\), we have $\|u_n-u_\tau\|_{H^1(G)}\leq \eta_1$ for large \(n\). Applying \eqref{b:26} and \eqref{b:28} with \(v=u_n\), and using \(\theta_n\to0^+\),
	\begin{equation}\label{b:29}
		D^2{E_{W,\tau}}(u_n)[w,w] < -\frac{\beta}{2}\|w\|_{H^1(G)}^2
		< -\theta_n\|w\|_{H^1(G)}^2, \qquad \forall\, w\in Y_0\setminus\{0\},
	\end{equation}
	for all large \(n\). Since \(\dim {Y_0}=2 >1\) and
	\begin{align*}
	{E_{W,\tau}''}(u_n)[w,w]+\lambda_\tau\|w\|_{L^2(G)}^2
	= D^2{E_{W,\tau}}(u_n)[w,w],
	\end{align*}
	it follows from \eqref{b:29} that
	\begin{align*}
	{E_{W,\tau}''}(u_n)[w,w] + \lambda_\tau\|w\|_{L^2(G)}^2
	< -\theta_n\|w\|_{H^1(G)}^2
	\qquad \forall\, w\in Y_0\setminus\{0\}.
	\end{align*}
	As \(\dim Y_0>1\), this contradicts \eqref{b:18}. Hence $\tilde m_0(u_\tau)\leq 1$, i.e. the constrained Morse index of \(u_\tau\) is at most one. Since \(T_{u_\tau}S_\mu\) has codimension one in \(H^1(G)\), we conclude $m(u_\tau)\leq 2.$
    \qed

\end{proof}

\section{A blow-up analysis and proof of the main theorem}\label{bs:3}

By Proposition \ref{bp:1}, there is a sequence $\tau_n \to 1^-$ together with a corresponding sequence $u_{\tau_n}\in H^1_\mu(G)$ of constrained critical points of $E_{W,\tau_n}$ on $H^1_\mu(G)$. Each $u_{\tau_n}$ is produced at the mountain-pass level $c_{W,\tau_n}$, that is ${E_{W, \tau_n}}(u_{\tau_n})=c_{W,\tau_n}$, and satisfies $m(u_{\tau_n})\leq 2$.
The associated Lagrange multipliers are strictly positive, $\lambda_n>0$ for all $n\in\mathbb N.$

To obtain a solution of the original problem at a positive energy level, it suffices to show that $\{u_{\tau_n}\}$ converges; the key step is thus the boundedness of $\{u_{\tau_n}\}$ in $H^1(G)$.

By Lemma {\ref{bl:2}} and the monotonicity of the mountain-pass levels
$c_{W,\tau_n}$,
\begin{align*}
E_{W,1}(w_1,G) \leq E_{W,\tau}(w_1,G) \leq c_{W,\tau_n}  \leq c_{W,1/2},
\qquad \forall\, \tau\in\left[\tfrac12,1\right],
\end{align*}
so the levels $c_{W,\tau_n}$ are uniformly bounded. Using the Kirchhoff conditions,
\begin{align*}
\int_G |u_{\tau_n}'|^2\,dx + \int_G W(x)|u_{\tau_n}|^2\,dx
+ \lambda_n \int_G |u_{\tau_n}|^2\,dx = {\tau_n}\int_\mathcal{K} |u_{\tau_n}|^p\,dx,
\end{align*}
whence
\begin{equation}\label{b:30}
	{\tau_n}\int_\mathcal{K} |u_{\tau_n}|^p\,dx  = \int_G |u_{\tau_n}'|^2\,dx
	+ \int_G W(x)|u_{\tau_n}|^2\,dx + \lambda_n\mu.
\end{equation}
On the other hand, from
\begin{align*}
E_{W,{\tau_n}}(u_{\tau_n},G) =
\frac12\int_G |u_{\tau_n}'|^2\,dx
+ \frac12\int_G W(x)|u_{\tau_n}|^2\,dx
- \frac{{\tau_n}}{p}\int_\mathcal{K}|u_{\tau_n}|^p\,dx = c_{W,\tau_n},
\end{align*}
substituting \eqref{b:30} into $E_{W,{\tau_n}}(u_{\tau_n},G)$ gives
\begin{align*}
\left(\frac12-\frac1p\right)
\left( \int_G |u_{\tau_n}'|^2\,dx +
\int_G W(x)|u_{\tau_n}|^2\,dx \right)
= c_{W,\tau_n}+\frac{\lambda_n\mu}{p}.
\end{align*}
By $(W2)$,
\begin{align*}
\left(\frac12-\frac1p\right)
\int_G |u_{\tau_n}'|^2\,dx \leq
c_{W,\tau_n}+\frac{\lambda_n\mu}{p}.
\end{align*}
So if $\{\lambda_n\}$ is bounded, then $\{u_{\tau_n}\}$ is bounded in $H^1(G)$. We therefore assume, up to a subsequence, that $\lambda_n \to +\infty$, and derive a contradiction via a blow-up analysis of $\{u_{\tau_n}\}$ as in
\cite{chang2023normalized, esposito2011pointwise, carrillo2026blow}.

We stress that this section concerns the regime $\lambda_n\to+\infty$. The obstruction specific to the present setting occurs as $\lambda\to0$, on the half-lines where $W\not\equiv0$ and no explicit solution is available; it is removed by $(W4)$ and Lemma \ref{bl:7}.

Writing $u_n =u_{\tau_n}$ with $\tau_n\to 1^{-}$, we study a sequence of positive solutions $\{u_n\}\subset H^{1}(G)$ of
\begin{equation}\label{b:31}
	\begin{cases}
		-u_n''+W(x)u_n+\lambda_nu_n =\tau_n\chi(x)u_n^{p-1},
		& \text{on } G,\\[2mm]
		u_n>0, & \text{on } G,\\[2mm]
		\displaystyle\sum_{e\succ v}u'_{e,n}(v)=0,
		& \forall\, v\in V,
	\end{cases}
\end{equation}
where $m(u_n)\leq 2$ for every $n\in\mathbb N$ and $\lambda_n\to+\infty.$

We first observe that the local maxima of $u_n$ lie on the compact core $\mathcal{K}$.

\begin{lemma}\label{bl:4}
	Let $W$ satisfy $(W1)$--{$(W2)$} and let $x_n \in G$ be a local maximum point of $u_n$. Then $x_n \in \mathcal{K}$ and
	\begin{equation}\label{b:32}
		u_n(x_n) \ge \left({W(x_n)+ \lambda_n}\right)^{\frac{1}{p-2}}.
	\end{equation}
\end{lemma}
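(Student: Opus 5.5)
The plan is to show first that no local maximum of $u_n$ can lie on a half-line, and then to read \eqref{b:32} off the sign of $u_n''$ at the maximum point.

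\emph{Step 1: $x_n\in\mathcal K$.} Suppose $x_n$ lies in the interior of a half-line $\ell\simeq[0,+\infty)$, away from the vertex. On $\ell$ we have $\chi\equiv0$, so $u_n''=(W+\lambda_n)u_n$. By $(W2)$, together with $\lambda_n>0$ and $u_n>0$, this gives $u_n''>0$ a.e.\ on $\ell$. Hence $u_n$ is strictly convex on $\ell$, and a strictly convex function has no interior local maximum. If the vertex of $\ell$ is itself a local maximum, that point already belongs to $\mathcal K$, since the vertex attaching a half-line is an endpoint of a bounded edge (the compact core is nonempty and $G$ is connected). So in every case $x_n\in\mathcal K$.

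\emph{Step 2: the lower bound at an interior point.} Let $x_n$ be interior to a bounded edge. At a local maximum $u_n''(x_n)\le0$, so the equation gives
\begin{align*}
(W(x_n)+\lambda_n)\,u_n(x_n)\le \tau_n u_n(x_n)^{p-1}\le u_n(x_n)^{p-1}.
\end{align*}
Dividing by $u_n(x_n)>0$ yields $u_n(x_n)^{p-2}\ge W(x_n)+\lambda_n$, which is \eqref{b:32}. Since $W$ is only $L^\infty$, $u_n$ is merely $W^{2,\infty}$ and the pointwise value $u_n''(x_n)$ is not defined. To make this rigorous I will argue by contradiction. If $u_n(x_n)^{p-2}<W+\lambda_n$ held essentially near $x_n$, then by continuity of $u_n$ we would have $u_n''=(W+\lambda_n-\tau_n u_n^{p-2})u_n>0$ a.e.\ on a neighbourhood of $x_n$. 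Then $u_n$ would be strictly convex there, which is impossible at a local maximum. This yields \eqref{b:32} with $W(x_n)$ interpreted as the essential infimum of $W$ near $x_n$, or pointwise if $W$ is continuous.

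\emph{Step 3: the case $x_n$ a vertex.} At a vertex the Kirchhoff condition applies. Every outgoing derivative satisfies $u'_{e,n}(x_n)\le0$ because $x_n$ is a maximum, and they sum to zero, so all of them vanish. On each incident edge, $u_n$ then has zero derivative at the endpoint and is maximal there. If the strict inequality opposite to \eqref{b:32} held, $u_n$ would be strictly convex near the endpoint with zero initial slope, hence strictly increasing away from $x_n$. That contradicts maximality, so the same bound follows.

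I expect the main obstacle to be the regularity issue: giving a meaning to $W(x_n)$ when $W$ is only $L^\infty$, which the convexity argument of Step 2 handles.
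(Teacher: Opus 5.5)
Your proof is correct and follows the same route as the paper. On a half-line $\chi\equiv0$, and $(W2)$ with $\lambda_n>0$ gives $u_n''=(W+\lambda_n)u_n>0$, which excludes a local maximum there; at a maximum in $\mathcal K$, $u_n''\le0$ together with the equation and $\tau_n\le1$ gives \eqref{b:32}.

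Your version is more careful than the paper's. The paper claims $u_n\in C^2$ by elliptic regularity, which is not available when $W$ is merely in $L^\infty$, and it never handles a maximum at a vertex, where $u_n''(x_n)$ has no meaning. Your a.e.\ convexity argument and the Kirchhoff argument (which forces all outgoing derivatives to vanish) close exactly these gaps. Your essential-infimum reading of $W(x_n)$ is the natural one for $W\in L^\infty$, and it still gives $u_n(x_n)^{p-2}\ge\lambda_n$, which is all that Propositions~\ref{bp:2}--\ref{bp:3} use.
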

\begin{proof}
Suppose $x_n$ lies in the interior of a half-line $\ell_i\in E$, identified with $[0,+\infty)$, so $x_n\in(0,+\infty)$; by elliptic regularity $u_n|_{\ell_i}\in C^2([0,+\infty))$. As $x_n$ is a local maximum, $u_n''(x_n)\leq 0$.Since $\chi\equiv0$ on $\ell_i$, $u_n$ satisfies on $\ell_i$
\begin{align*}
	-u_n''+W(x)u_n+\lambda_nu_n=0,
	\end{align*}
	so that
	\begin{align*}
	u_n''(x_n) = \bigl(W(x_n)+\lambda_n\bigr)u_n(x_n) \le 0.
	\end{align*}
	Using $u_n(x_n)>0$, $(W2)$ and $\lambda_n>0$ gives $u_n''(x_n)>0$, contradicting
	$u_n''(x_n)\leq 0$. Hence $x_n\in \mathcal{K}$. Since $u_n''(x_n)\leq 0$,
	\eqref{b:31} yields
	\begin{align*}
	\bigl(W(x_n)+\lambda_n\bigr)u_n(x_n)\leq \tau_n\chi(x_n)u_n(x_n)^{p-1},
	\end{align*}
	and, using $\chi(x_n)=1$ and $\tau_n\le 1$, we obtain \eqref{b:32},
	\begin{align*}
	u_n(x_n)\geq \left({\lambda_n+W(x_n)}\right)^{\frac{1}{p-2}}.
	\end{align*} \qed
\end{proof}

For $x_0\in G$ and $r>0$, write $B_r(x_0)=\{x\in G:\operatorname{dist}(x,x_0)<r\}$. For $m\geq 1$, let $G_m$ be the star graph of $m$ half-lines joined at a common
vertex identified with $0$; in particular $G_1=\mathbb{R}^+$, while $G_2$ is isometric to $\mathbb{R}$. Since $m(u_n)\leq 2$, the local structure of $G$ near a maximum of $u_n$ is simple enough to describe the asymptotics of $\{u_n\}$ as
$\lambda_n\to+\infty$.

\begin{lemma}\label{bl:5}
	Let $U\in H^1_{\mathrm{loc}}(G_m)$ solve
	\begin{equation}\label{b4:32}
		\begin{cases}
			- U'' + \lambda U =
			\tau U^{p-1}\chi_{\ell_1\cup\cdots\cup\ell_j}
			& \text{in } G_m,\\[2mm]
			U>0
			& \text{in } G_m,\\[2mm]
			\displaystyle \sum_{i=1}^{m} U_i'(0)=0,
		\end{cases}
	\end{equation}
    	for some $p>2$, $\tau,\lambda>0$, where $U_i$ is the restriction of $U$ to the $i$-th half-line $\ell_i$ and {$1\le j\le m$}. Assume $U$ is bounded and stable outside a compact set $\mathcal{C}\subset G_m$, i.e.\
	\begin{align*}
	{Q}(\varphi;U,G_m)\ge 0, \quad \forall\,
	\varphi\in H^1(G_m)\cap C_c(G_m\setminus \mathcal{C}).
	\end{align*}
	Then $U(x)\to 0$ as $\operatorname{dist}(x,0)\to+\infty$, and $U\in H^1(G_m)$.
\end{lemma}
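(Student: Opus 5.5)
The proof reduces to one-dimensional ODE analysis on each half-line $\ell_i$ of $G_m$, combined with a phase-plane classification. The Kirchhoff condition at $0$ plays no role here: it is only the behaviour of $U$ far out on each edge that matters. Stability outside $\mathcal C$ will be used only on a tail $(R,+\infty)\subset\ell_i$ with $R$ large enough that the tail avoids $\mathcal C$.

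\emph{Linear edges} ($i>j$). Here $-U_i''+\lambda U_i=0$, so $U_i(x)=Ae^{-\sqrt\lambda x}+Be^{\sqrt\lambda x}$. Boundedness forces $B=0$, so $U_i$ decays exponentially, together with its derivative.

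\emph{Nonlinear edges} ($i\le j$). Here $-U_i''+\lambda U_i=\tau U_i^{p-1}$ with $U_i>0$ bounded. I would use the conserved energy
\begin{align*}
H=\tfrac12 (U_i')^2-\tfrac{\lambda}{2}U_i^2+\tfrac{\tau}{p}U_i^p .
\end{align*}
In the phase plane $\{U>0\}$ the trajectories are classified as follows.
\begin{enumerate}
\item[(a)] $H>0$: the trajectory either reaches $U=0$ (excluded by positivity) or becomes unbounded (excluded by boundedness).
\item[(b)] $H=0$: the trajectory is the homoclinic orbit of the origin. Since it must stay positive for all $x>0$ and remain bounded, eventually $U_i\to0$ along the stable manifold. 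Linearizing at the origin gives $|U_i|+|U_i'|\le Ce^{-\sqrt\lambda x/2}$ for large $x$.
\item[(c)] $H_c\le H<0$, where $c=(\lambda/\tau)^{1/(p-2)}$ and $H_c$ is the energy of the equilibrium: either $U_i\equiv c$, or $U_i$ is a nonconstant periodic solution oscillating around $c$.
\end{enumerate}
Once case (c) is excluded, every edge decays exponentially. Then $U(x)\to0$ as $\operatorname{dist}(x,0)\to\infty$, and $U\in H^1(G_m)$ because $U$ is continuous on the compact set $\{\operatorname{dist}\le R\}$ and exponentially small in $H^1$ outside it.

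\emph{Excluding case (c) via stability.} This is the step I expect to be the main obstacle. For the constant solution $U_i\equiv c$ the quadratic form on the tail is
\begin{align*}
Q(\varphi)=\int|\varphi'|^2-(p-2)\lambda\int\varphi^2 .
\end{align*}
I would take a wide bump $\varphi_L(x)=\varphi((x-x_L)/L)$ with centre $x_L$ far out on $\ell_i$. Then $\int|\varphi_L'|^2=O(L^{-1})$ while $\int\varphi_L^2\sim L$, so $Q(\varphi_L)<0$ for $L$ large, contradicting stability.

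For a nonconstant periodic $U_i$, the derivative $v=U_i'$ solves the linearized equation
\begin{align*}
-v''+\bigl(\lambda-(p-1)\tau U_i^{p-2}\bigr)v=0
\end{align*}
and has infinitely many zeros $z_1<z_2<\cdots$ escaping to infinity. I would set $\psi=v\,\chi_{[z_k,z_{k+1}]}$ with $z_k>R$. Integration by parts gives $Q(\psi)=0$, and $\psi\in H^1\cap C_c$ is supported in the stable region. Since $Q\ge0$ there, $\psi$ minimizes $Q$ on compactly supported test functions in $(R,+\infty)$. Hence $\psi$ is a weak solution of the linearized equation on a neighbourhood of $z_k$, which makes $\psi$ $C^1$ there.

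However, $\psi$ vanishes identically to the left of $z_k$, while $\psi'(z_k^+)=U_i''(z_k)\neq0$. Indeed, $U_i''(z_k)=0$ together with $U_i'(z_k)=0$ would place the trajectory at the equilibrium, impossible for a nonconstant orbit. This contradicts ODE uniqueness. (Alternatively, one can perturb $\psi$ by a small bump straddling $z_k$ to produce $Q<0$ directly.)

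Hence only case (b) survives on every nonlinear edge, which concludes the argument.
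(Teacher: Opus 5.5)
Your proof is correct, but it takes a different route from the paper on the nonlinear edges. On the linear edges $\ell_{j+1},\dots,\ell_m$ you argue exactly as the paper does: boundedness kills the growing exponential.

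On $\ell_1,\dots,\ell_j$ the paper gives no argument of its own. It defers to \cite{chang2023normalized}, which rests on the decay theorem \cite[Theorem 2.3]{esposito2011pointwise} for bounded solutions that are stable outside a compact set. You replace that citation with a self-contained one-dimensional proof:
\begin{itemize}
\item The conserved energy $H$ classifies the positive orbits on a half-line.
\item Positivity alone excludes $H>0$. A small correction: such orbits never become unbounded. They always reach $U=0$ in finite forward time, because $U'$ cannot vanish below the positive root of $V(U)=H$. This does not affect your conclusion.
\item Stability is then used twice. A wide bump excludes the constant state $c=(\lambda/\tau)^{1/(p-2)}$, where the coefficient of $\varphi^2$ is $-(p-2)\lambda$. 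The Jacobi-field argument with $\psi=U_i'\chi_{[z_k,z_{k+1}]}$ excludes the periodic orbits: $Q(\psi)=0$ forces $\psi$ to be a $C^1$ solution of the linearized equation across $z_k$, which contradicts $U_i''(z_k)\neq0$.
\end{itemize}

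Your route buys transparency and a sharper conclusion. Each nonlinear tail is identified as a translate of the soliton with explicit decay rate $\sqrt\lambda$. Boundedness is needed only on the linear edges, and the Kirchhoff condition is indeed irrelevant.

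The cited route buys generality. It does not use a first integral, so it is the argument that survives beyond one space dimension (the setting of \cite{esposito2011pointwise}) and for non-autonomous equations, where your phase-plane classification is not available.
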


\begin{proof}
	On $\ell_1,\ldots,\ell_j$ the argument is as in \cite{chang2023normalized}, based on \cite[Theorem 2.3]{esposito2011pointwise}. On $\ell_{j+1},\ldots,\ell_m$, $U$ solves $U''-\lambda U =0$ for some $\lambda>0$,
	and, $U$ being bounded, its restriction to each such half-line is $U(x)=c\,e^{-\sqrt{\lambda}\,x}$ for some $c>0$.  \qed
\end{proof}

\begin{lemma}\label{bl:6}
	Let $U\in H^{1}(G_m)$ be a nontrivial solution of \eqref{b4:32}. Then $m(U)>0$.
\end{lemma}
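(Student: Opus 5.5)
To show $m(U)>0$, I will exhibit a single test function $\varphi\in H^1(G_m)\cap C_c(G_m)$ with $Q(\varphi;U,G_m)<0$. The natural candidate is $U$ itself, since the quadratic form is evaluated along the equation. Here $U\in H^1(G_m)$ is nontrivial and positive, and it solves \eqref{b4:32} with the Kirchhoff condition at $0$.

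\textbf{Step 1: the energy identity.} I test the equation against $U$. This is justified because $U\in H^1(G_m)$ and, by Lemma~\ref{bl:5}, $U$ decays exponentially on the linear half-lines. On $\ell_1,\dots,\ell_j$, $U\in H^1$ forces $U,U'\to0$ at infinity via the ODE. Integrating by parts edge by edge, the vertex contributions cancel by the Kirchhoff condition, and I get
\begin{align*}
\int_{G_m}\bigl(|U'|^2+\lambda U^2\bigr)\,dx=\tau\int_{\ell_1\cup\cdots\cup\ell_j}U^p\,dx.
\end{align*}
Substituting this into \eqref{b:14} with $W\equiv0$ and $G'=\ell_1\cup\cdots\cup\ell_j$ gives
\begin{align*}
Q(U;U,G_m)=\tau\int_{G'}U^p\,dx-(p-1)\tau\int_{G'}U^p\,dx=-(p-2)\tau\int_{G'}U^p\,dx<0.
\end{align*}
The inequality is strict because $p>2$, $\tau>0$, and $U>0$ on $G'$, which is nonempty since $j\ge1$.

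\textbf{Step 2: compact support.} The Morse index is defined through $H^1(G_m)\cap C_c(G_m)$, but $U$ is not compactly supported. I will use cutoffs $\varphi_R=\eta_R U$, where $\eta_R$ equals $1$ on $B_R(0)$, vanishes outside $B_{2R}(0)$, and satisfies $|\eta_R'|\le 2/R$ along each half-line.

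I will show $Q(\varphi_R;U,G_m)\to Q(U;U,G_m)$ as $R\to\infty$. The terms $\int\lambda\varphi_R^2$ and $\int U^{p-2}\varphi_R^2\chi_{G'}$ converge by dominated convergence, since $U\in L^2$ and $U\in L^\infty$ (the latter from the one-dimensional embedding $H^1\hookrightarrow L^\infty$). For the kinetic term I write $\varphi_R'=\eta_R U'+\eta_R'U$. Then $\|\eta_R'U\|_{L^2}\le (2/R)\|U\|_{L^2}\to0$ and $\eta_R U'\to U'$ in $L^2$.

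Hence $Q(\varphi_R;U,G_m)<0$ for $R$ large, so $\mathrm{span}\{\varphi_R\}$ is a one-dimensional admissible subspace and $m(U)\ge1>0$.

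\textbf{Main obstacle.} The only delicate point is justifying the integration by parts in Step 1 on unbounded edges. I would handle it by testing the weak formulation directly with $\varphi_R\in H^1\cap C_c$, where no boundary terms at infinity arise, and then passing to the limit $R\to\infty$ exactly as in Step 2. This avoids needing pointwise decay of $U'$ at all.
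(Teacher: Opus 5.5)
Your proposal is correct and follows the paper's proof: you test the equation against $U$ to obtain $Q(U;U,G_m)=-(p-2)\tau\int_{\ell_1\cup\cdots\cup\ell_j}U^p\,dx<0$, and then pass to a compactly supported test function. The only difference is that you spell out the paper's one-line appeal to the density of $H^1(G_m)\cap C_c(G_m)$ in $H^1(G_m)$, using the cutoffs $\eta_R U$ and the boundedness $U\in L^\infty$ to show that $Q(\cdot\,;U,G_m)$ is continuous along this approximation.
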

\begin{proof}
By the Kirchhoff condition and \eqref{b4:32}, testing against $U$,
	\begin{align*}
	\int_{G_m} \left( |U'|^2 + \lambda U^2
	\right)\,dx = \tau \sum_{i=1}^{j}
	\int_{\ell_i} |U|^p\,dx.
	\end{align*}
	Therefore, with the $W$-free form $Q$,
	\begin{align*}
		{Q}(U;U,G_m) &= \int_{G_m} \left( |U'|^2 + \lambda U^2
		\right)\,dx - \tau (p-1) \sum_{i=1}^{j}
		\int_{\ell_i} |U|^p\,dx \\
		&= \tau(2-p) \sum_{i=1}^{j}
		\int_{\ell_i} |U|^p\,dx <0.
	\end{align*}
    	Since $p>2$, $U\not\equiv 0$ and $j\ge1$, the form has a negative direction; by density of $H^1(G_m)\cap C_c(G_m)$ in $H^1(G_m)$ there is $\varphi\in H^1(G_m)\cap C_c(G_m)$ with $Q(\varphi;U,G_m)<0$.   \qed
\end{proof}

Throughout, $\chi_A$ denotes the characteristic function of $A\subset G$.

\begin{prop}\label{bp:2}
	Let $G$ be a noncompact metric graph with finitely many edges and vertices, $W$ satisfy $(W1)$--{$(W2)$}, and $p>2$. Let $\{u_n\}\subset H^1(G)$ be positive solutions of \eqref{b:31} for some $\lambda_n\in\mathbb{R}$, $\tau_n\in(0,1]$ with $\tau_n\to1^-$, with $\lambda_n\to+\infty$ and $m(u_n)\leq k^*$ for some $k^*\geq 1$.  Let $x_n\in G$ satisfy, for some $R_n\to+\infty$,
    
	\begin{align*}
	u_n(x_n) = \max_{B_{R_n\tilde{\varepsilon}_n}(x_n)}u_n,
	\qquad \tilde{\varepsilon}_n = u_n(x_n)^{-\frac{p-2}{2}} \to 0,
	\end{align*}
	and assume
	\begin{equation}\label{b:33}
		\limsup_{n\to\infty} \frac{\operatorname{dist}(x_n,V)} {\tilde{\varepsilon}_n} = +\infty.
	\end{equation}
	Then, up to a subsequence:
	\begin{enumerate}
		\item[(i)] All $x_n$ lie in the interior of the same edge $e$, identified with $I_e=[0,\ell_e]$ if $e$ is bounded, or $I_e=[0,\infty)$ if $e$ is a half-line.
		\item[(ii)] With $\varepsilon_n=\lambda_n^{-1/2}$,
		\begin{equation}\label{b:34}
			\frac{\tilde{\varepsilon}_n}{\varepsilon_n}
			\to  c \in (0,1], \qquad
			\frac{\operatorname{dist}(x_n,V)}{\varepsilon_n}
			\to +\infty \quad (n\to\infty),
		\end{equation}
		and the rescaled sequence
		\begin{equation}\label{b:35}
			v_n(y) = \varepsilon_n^{\frac{2}{p-2}}
			u_n(x_n+\varepsilon_n y), \qquad
			y\in \left[ -\tfrac{x_n}{\varepsilon_n},
			\tfrac{\ell_e-x_n}{\varepsilon_n} \right],
		\end{equation}
		converges in $C^2_{\mathrm{loc}}(\mathbb{R})$ to $U\in H^1(\mathbb{R})$, the unique positive finite-energy solution of
		\begin{align*}
		\begin{cases}
			-U''+U = U^{\,p-1}, & \text{in }\mathbb{R},\\[2mm]
			U>0, & \text{in }\mathbb{R},\\[2mm]
			U(0) = \displaystyle\max_{\mathbb{R}}{U},\\[2mm]
			U(x)\to 0, & \text{as } \operatorname{dist}(x,0)\to+\infty.
		\end{cases}
		\end{align*}
		\item[(iii)] There is $\{\varphi_n\}\subset H^1(G)\cap C_c(G)$ with $\operatorname{supp}\varphi_n \subset B_{\bar{R}\varepsilon_n}(x_n)$ for some $\bar{R}>0$, such that $Q_W(\varphi_n;u_n,G)<0$, with $Q_W$ as in \eqref{b:14}.
		\item[(iv)] For every $R>0$, $q\geq 1$,
		\begin{align*}
		\lim_{n\to\infty}\lambda_n^{\frac12-\frac{q}{p-2}}
		\int_{B_{R\varepsilon_n}(x_n)} u_n^q\,dx
		= \lim_{n\to\infty} \int_{B_R(0)} v_n^q\,dy
		= \int_{B_R(0)} U^q\,dy.
		\end{align*}
	\end{enumerate}
\end{prop}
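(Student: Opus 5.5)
The plan is the standard blow-up argument of \cite{esposito2011pointwise, chang2023normalized}, carried out first at the scale $\tilde\varepsilon_n$ and then converted to the scale $\varepsilon_n$.

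\emph{Step 1: rescale at the natural scale and locate the points.} Passing to a subsequence along which \eqref{b:33} is a genuine limit, and using that $G$ has finitely many edges, all $x_n$ lie in the interior of a single edge $e$; this gives (i). Set $\tilde v_n(y)=\tilde\varepsilon_n^{2/(p-2)}u_n(x_n+\tilde\varepsilon_n y)$. Then $\tilde v_n(0)=1=\max$ on $[-R_n,R_n]\cap$ the rescaled domain. By \eqref{b:33}, this domain invades $\mathbb R$, or at least a half-line containing an interval $[-R'_n,R'_n]$ with $R'_n\to\infty$. The rescaled function solves
\begin{align*}
-\tilde v_n''+\tilde\varepsilon_n^2\bigl(W(x_n+\tilde\varepsilon_n y)+\lambda_n\bigr)\tilde v_n=\tau_n\chi\,\tilde v_n^{p-1}.
\end{align*}
By Lemma \ref{bl:4}, $\tilde\varepsilon_n^2\lambda_n\le u_n(x_n)^{-(p-2)}\lambda_n\le 1$. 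By $(W1)$, $\tilde\varepsilon_n^2W\to0$ uniformly. So, along a subsequence, $\tilde\varepsilon_n^2\lambda_n\to c^{-2}\in[0,1]$.

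\emph{Step 2: pass to the limit and identify it.} Since $0<\tilde v_n\le1$ on expanding intervals, ODE estimates give $C^2_{\rm loc}$ compactness. Note first that $\chi$ equals $1$ near $x_n$: if $e$ were a half-line, Lemma \ref{bl:4} would forbid the maximum there, so $e\subset\mathcal K$. The limit $\tilde U$ solves $-\tilde U''+c^{-2}\tilde U=\tilde U^{p-1}$ on $\mathbb R$ with $\tilde U(0)=1=\max\tilde U$.

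The Morse bound $m(u_n)\le k^*$ passes to the limit: $\tilde U$ is stable outside a compact set. Otherwise one could find $k^*+1$ disjointly supported negative directions for the limit, pull them back, and absorb the $o(1)$ terms coming from $\tilde\varepsilon_n^2 W$ and from $\tau_n\to1$, giving $m(u_n)>k^*$.

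Next, exclude $c^{-2}=0$. The equation $-\tilde U''=\tilde U^{p-1}$ has no positive solution on $\mathbb R$, since $\tilde U$ would be concave and positive, hence constant, hence zero. I also expect Lemma \ref{bl:5} to handle this: a stable-outside-compact solution must decay, and that is incompatible with $c^{-2}=0$. So $c\in(0,1]$, which is the first part of \eqref{b:34}. Because $\tilde\varepsilon_n/\varepsilon_n\to c>0$, \eqref{b:33} turns into the second part of \eqref{b:34}.

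Switching to the scale $\varepsilon_n$ only rescales by $c$. Thus $v_n\to U$ with $U(y)=c^{2/(p-2)}\tilde U(cy)$, which solves $-U''+U=U^{p-1}$. By Lemma \ref{bl:5} with $m=2$, $j=2$, we get $U\to0$ at infinity and $U\in H^1$. Uniqueness of the even ground state centred at its maximum identifies $U$. This proves (ii).

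\emph{Step 3: items (iii) and (iv).} By Lemma \ref{bl:6}, $U$ has a negative direction $\psi\in C_c(\mathbb R)$ with support in $[-\bar R,\bar R]$. Set $\varphi_n(x)=\psi((x-x_n)/\varepsilon_n)$. A change of variables gives
\begin{align*}
\varepsilon_n Q_W(\varphi_n;u_n,G)=\int\bigl(|\psi'|^2+(1+\varepsilon_n^2W-(p-1)\tau_n v_n^{p-2})\psi^2\bigr)dy\to Q(\psi;U,\mathbb R)<0.
\end{align*}
Hence $Q_W(\varphi_n;u_n,G)<0$ for large $n$, which is (iii). Item (iv) is the same change of variables, $u_n^q\,dx=\varepsilon_n^{1-2q/(p-2)}v_n^q\,dy$, together with uniform convergence $v_n\to U$ on $B_R(0)$.

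\emph{Main obstacle.} The delicate step is Step 2: passing the Morse-index bound to the limit and ruling out $c^{-2}=0$. Here the $W$ term and the boundary of the rescaled domain must be controlled using \eqref{b:33} and $(W1)$.
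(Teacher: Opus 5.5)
Your proposal is correct and follows essentially the same route as the paper. You rescale at $\tilde\varepsilon_n$, use Lemma~\ref{bl:4} to get $\tilde\varepsilon_n^2\lambda_n\le 1$, pass the Morse bound to the limit, rule out a vanishing limit of $\tilde\varepsilon_n^2\lambda_n$, apply Lemmas~\ref{bl:5}--\ref{bl:6}, and obtain (iii)--(iv) by change of variables. Your concavity argument (a nonnegative concave function on $\mathbb{R}$ is constant) is a cleaner substitute for the paper's phase-plane remark, and your explicit observation $e\subset\mathcal K$ makes precise a step the paper leaves implicit.

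There is one notational slip. Having set $c^{-2}:=\lim_n\tilde\varepsilon_n^2\lambda_n\in(0,1]$, you get $\tilde\varepsilon_n/\varepsilon_n=(\tilde\varepsilon_n^2\lambda_n)^{1/2}\to c^{-1}\in(0,1]$, so $c\ge 1$ and not $c\in(0,1]$ as written. Your rescaling $U(y)=c^{2/(p-2)}\tilde U(cy)$ is nevertheless consistent with your own definition of $c$.
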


\begin{proof}
	After rescaling, the vertices of $G$ become invisible in the limit, and a limit problem on the line results. As $G$ has finitely many edges, up to a subsequence all $x_n$ lie on the same edge $e$, giving $(i)$. Set
	\begin{align*}
	\tilde{u}_n(y) = \tilde{\varepsilon}_n^{\frac{2}{p-2}}
	u_n\!\left(x_n+\tilde{\varepsilon}_n y\right), \qquad
	y\in \tilde{e}_n = \frac{e-x_n}{\tilde{\varepsilon}_n}.
	\end{align*}
	Any fixed $[-a,a]$ lies in $\tilde e_n$ for large $n$: indeed
	\begin{align*}
	\tilde e_n \supset \left\{ y:\, |\tilde\varepsilon_n y|
	< \operatorname{dist}(x_n,V) \right\}
	= \left\{ y:\, |y| < \frac{\operatorname{dist}(x_n,V)} {\tilde\varepsilon_n}
	\right\},
	\end{align*}
	and by \eqref{b:33} these intervals exhaust $\mathbb R$. On $[-a,a]$ we have $\tilde u_n(0)=1=\max_{[-a,a]}\tilde u_n$, since $u_n(x_n)=\max_{B_{R_n\tilde\varepsilon_n}(x_n)}u_n$. Moreover $\tilde u_n$ solves
	\begin{align*}
	-\tilde u_n'' +\tilde\varepsilon_n^2
	W(x_n+\tilde\varepsilon_n y)\tilde u_n
	+\tilde\varepsilon_n^2\lambda_n\tilde u_n
	= \tau_n\chi(x_n+\tilde\varepsilon_n y) \tilde u_n^{p-1}
	\end{align*}
	on $\tilde e_n$, with $\tilde u_n>0$.
	By Lemma \ref{bl:4}, $\bigl(u_n(x_n)\bigr)^{p-2}\ge W(x_n)+\lambda_n$, and since
	$W\ge0$ by $(W2)$,
	\begin{align*}
	0<\tilde{\varepsilon}_n^{\,2}\lambda_n
	=\frac{\lambda_n}{\bigl(u_n(x_n)\bigr)^{p-2}}
	\le \frac{\lambda_n+W(x_n)}{\bigl(u_n(x_n)\bigr)^{p-2}}\le 1
	\qquad\forall\,n\in\mathbb N .
	\end{align*}
	Hence, up to a subsequence, $\lim_n\tilde{\varepsilon}_n^{\,2}\lambda_n
	=\bar{\lambda}\in[0,1]$. Since $W\in L^\infty(G)$ and
	$\tilde\varepsilon_n\to0$, we also have
	$\tilde{\varepsilon}_n^{\,2}W(x_n+\tilde{\varepsilon}_ny)\to0$ uniformly on compact sets, and therefore
	\begin{align*}
	\tilde{\varepsilon}_n^{\,2}\bigl(\lambda_n+W(x_n+\tilde{
		\varepsilon}_ny)\bigr) \to\bar{\lambda} \in [0,1].
	\end{align*}
	Elliptic estimates give $\tilde u_n \to \tilde u$ in $C^2_{\mathrm{loc}}(\mathbb R)$, and in the limit
	\begin{equation}\label{b:36}
		-\tilde u''+\bar{\lambda}\,\tilde u = \tilde u^{\,p-1}, \qquad \tilde u\ge 0
		\text{ in } \mathbb R,
	\end{equation}
	with $\tilde u(0)=\lim_n\tilde u_n(0)=1$; the strong maximum principle gives $\tilde u>0$ in $\mathbb R$.
	
	We show $m(\tilde u)\le k^*$. If not, there are $k>k^*$ linearly independent $\varphi_1,\ldots,\varphi_k\in H^1(\mathbb R)\cap C_c(\mathbb R)$ with $Q(\varphi_i;\tilde u,\mathbb R)<0$. Setting $\varphi_{i,n}(x) = \tilde\varepsilon_n^{1/2}\varphi_i\!\big(\tfrac{x-x_n}{\tilde\varepsilon_n}\big)$, each is supported near $x_n$ inside $e$ for large $n$ (with $\operatorname{supp}\varphi_i\subset[-M,M]$,
	$\operatorname{supp}\varphi_{i,n}=[x_n-M\tilde\varepsilon_n,x_n+M\tilde\varepsilon_n]\subset
	[x_n-\operatorname{dist}(x_n,V),x_n+\operatorname{dist}(x_n,V)]\subset e$ by \eqref{b:33}), remains linearly independent in $H^1(G)$, and
	\begin{align*}
	Q_W(\varphi_{i,n};u_n,G) = Q_W(\varphi_{i,n};u_n,e)
	= Q_{W_n}(\varphi_i;\tilde u_n,\tilde e_n)
	\;\xrightarrow[n\to\infty]{}\; Q(\varphi_i;\tilde u,\mathbb R) <0,
	\end{align*}
	where $W_n(y):=\tilde\varepsilon_n^{2}W(x_n+\tilde\varepsilon_n y)\to0$
	uniformly on compact sets, so that the potential disappears in the limit and $Q$ is the $W$-free form. Thus $m(u_n)\ge k>k^*$ for large $n$, contradicting $m(u_n)\le k^*$; hence $m(\tilde u)\le k^*$, and $\tilde u$ is a nontrivial
	solution of \eqref{b:36} of finite Morse index.
	
	We claim $\bar\lambda>0$. If $\bar\lambda=0$, then \eqref{b:36} reduces to
	\begin{align*}
	-\tilde u'' = \tilde u^{\,p-1} \qquad\text{in } \mathbb R,
	\end{align*}
	which, by phase-plane analysis, has only nontrivial periodic sign-changing solutions --- a contradiction. Hence $\bar{\lambda}>0$, and by Lemma \ref{bl:5} $\tilde u(x)\to0$ and $\tilde u\in H^1(\mathbb R)$. Therefore,
	\begin{equation}\label{b:37}
		0< \liminf_{n\to\infty} \frac{\lambda_n}
		{\bigl(u_n(x_n)\bigr)^{p-2}} \le \limsup_{n\to\infty}
		\frac{\lambda_n} {\bigl(u_n(x_n)\bigr)^{p-2}} \le 1,
	\end{equation}
	i.e., $0<\liminf \tilde{\varepsilon}_n^{2}\lambda_n\le \limsup \tilde{\varepsilon}_n^{2}\lambda_n\le 1$, which is the first estimate in \eqref{b:34}. Working now with $v_n$ from \eqref{b:35} and combining \eqref{b:33} and \eqref{b:37}, $\limsup_n \operatorname{dist}(x_n,V)/\varepsilon_n = +\infty$, and $v_n\to v$ in $C^2_{\mathrm{loc}}(\mathbb R)$ with $-v''+v=v^{p-1}$, $v\ge0$; moreover $v(0)=\lim_n v_n(0)\ge1$ is a global maximum, so $v>0$ by the strong maximum principle, and $m(v)\le k^*$. By Lemma \ref{bl:5}, $v(x)\to0$ and $v\in H^1(\mathbb R)$; such a solution is unique, denoted $U$, proving $(ii)$.
	
	Assertion $(iii)$ follows since $U$ has strictly positive Morse index (Lemma \ref{bl:6}); in fact $m(U)=1$, so there exists $\varphi\in H^1(\mathbb R)\cap C_c(\mathbb R)$ with $Q(\varphi;U,\mathbb R)<0$, where $Q(\cdot;U,\mathbb R)$ is the quadratic form associated with the autonomous
	limit equation $-U''+U=U^{p-1}$, namely
	\begin{align*}
	Q(\varphi;U,\mathbb R)
	= \int_{\mathbb R}\Big(|\varphi'|^2 + \big(1-(p-1)U^{p-2}\big)\varphi^2\Big)\,dx .
	\end{align*}
	Define the rescaled test function
	\begin{align*}
	\varphi_{n}(x) = \varepsilon_n^{1/2}\,\varphi\!\Big(\tfrac{x-x_n}{\varepsilon_n}\Big),
	\end{align*}
	which, since $\varphi$ has compact support, satisfies
	$\operatorname{supp}\varphi_{n} \subset B_{\bar R\varepsilon_n}(x_n)$ for some $\bar R>0$ and, for $n$ large, is supported inside the edge $e$; hence it may be
	regarded as an element of $H^1(G)\cap C_c(G)$. By the change of variables $x=x_n+\varepsilon_n y$, using $\varepsilon_n^2\lambda_n=1$, $\varepsilon_n^{2}W(x_n+\varepsilon_n y)\to0$ locally uniformly, and $v_n\to U$ in $C^2_{\mathrm{loc}}(\mathbb R)$, we obtain
	\begin{align*}
	Q_W(\varphi_{n};u_n,G) \;\xrightarrow[n\to\infty]{}\; Q(\varphi;U,\mathbb R) < 0 .
	\end{align*}
	Therefore $Q_W(\varphi_{n};u_n,G)<0$ for all sufficiently large $n$, which establishes $(iii)$.
	
	Finally $(iv)$ follows from $v_n\to U$ in $C^2_{\mathrm{loc}}(\mathbb R)$: for $R>0$, $q\ge1$, $v_n^q\to U^q$ uniformly on $B_R(0)$, so $\int_{B_R(0)}v_n^q\,dy\to\int_{B_R(0)}U^q\,dy$. With $x=x_n+\varepsilon_n y$,
	\begin{align*}
	\int_{B_R(0)} v_n^q\,dy = \lambda_n^{\frac12 -\frac{q}{p-2}}
	\int_{B_{R\varepsilon_n}(x_n)} u_n^q\,dx,
	\end{align*}
	whence
	\begin{align*}
	\lim_{n\to\infty} \lambda_n^{\frac12-\frac{q}{p-2}}
	\int_{B_{R\varepsilon_n}(x_n)} u_n^q\,dx = \int_{B_R(0)} U^q\,dy.
	\end{align*}
	This proves $(iv)$.  \qed
\end{proof}

\begin{prop}\label{bp:3}
	Assume all hypotheses of Proposition \ref{bp:2} except \eqref{b:33}, and instead
	\begin{equation}\label{b:38}
		\limsup_{n\to\infty} \frac{\operatorname{dist}(x_n,V)}
		{\tilde{\varepsilon}_n} <+\infty.
	\end{equation}
	Then, up to a subsequence:
	\begin{enumerate}
		\item[(i)] There is a vertex $v\in V$ with $x_n \to v$; all $x_n$ lie on the same edge $e_1 \simeq [0,\ell_1]$, $e_1\subset \mathcal{K}$, with $v$ at coordinate $0$.
		\item[(ii)] Let $e_2 \simeq [0,\ell_2],\ldots,e_j \simeq [0,\ell_j]$ be the other bounded edges at $v$ and $\ell_{j+1}\simeq [0,+\infty),\ldots,\ell_m\simeq
		[0,+\infty)$ the half-lines at $v$, with $v$ at $0$ on each. With $\varepsilon_n=\lambda_n^{-1/2}$,
		\begin{equation}\label{b:39}
			\frac{\tilde{\varepsilon}_n}{\varepsilon_n}
			\to c\in(0,1],\qquad \limsup_{n\to\infty}
			\frac{\operatorname{dist}(x_n,V)} {\varepsilon_n} <+\infty,
		\end{equation}
		and
		\begin{align*}
		v_n(y) = \varepsilon_n^{\frac{2}{p-2}} u_n(\varepsilon_n y), \quad y \in \tfrac{e_i}{\varepsilon_n}\ (i=1,\ldots,j)\ \text{ or }\ y \in\tfrac{\ell_i}{\varepsilon_n}\ (i=j+1,\ldots,m),
		\end{align*}
		converges to $U\in C^0_{\mathrm{loc}}(G_m)$; with $U_i=U|_{\ell_i}$ and $v_{i,n}$ the restriction of $v_n$ to $e_i/\varepsilon_n$ (resp.\ to $\ell_i/\varepsilon_n$ for $i>j$), one has $v_{i,n}\to U_i$ in $C^2_{\mathrm{loc}}([0,+\infty))$ for all $i$. Moreover
		$U\in H^1(G_m)$ is a positive solution of
		\begin{align*}
		\begin{cases}
			-U''+U = U^{p-1}\chi_{\ell_1\cup\cdots\cup\ell_j} & \text{on } G_m,\\[2mm]
			U>0 & \text{on } G_m,\\[2mm]
			\displaystyle\sum_{i=1}^{m}U_i'(0^+)=0,\\[2mm]
			U(x)\to 0 & \text{as } \operatorname{dist}(x,0)\to+\infty,
		\end{cases}
		\end{align*}
		attaining its global maximum at $\bar x\in \ell_1$, with
		$\bar x = \lim_n\bar x_n\in[0,+\infty)$, $\bar x_n = \operatorname{dist}(x_n,V)/\varepsilon_n$.
		\item[(iii)] There is $\varphi_n \in H^1(G)\cap C_c(G)$ with
		$\operatorname{supp}\varphi_n \subset B_{\bar R\,\varepsilon_n}(x_n)$, for some $\bar R>0$,
		such that $Q_W(\varphi_n;u_n,G)<0$.
		\item[(iv)] For $R>\bar x$ and $q\ge 1$,
		\begin{align*}
			\lim_{n\to\infty} \lambda_n^{\frac12-\frac{q}{p-2}}
			\int_{B_{R\varepsilon_n}(x_n)} u_n^q\,dx
			&= \lim_{n\to\infty} \int_{B_R(\bar x_n)} v_n^q\,dy \\
			&= \int_{[0,\bar x+R]} U_1^q\,dy + \sum_{i=2}^{m} \int_{[0,R-\bar x]} U_i^q\,dy
			= \int_{B_R(\bar x)} U^q\,dy.
		\end{align*}
	\end{enumerate}
\end{prop}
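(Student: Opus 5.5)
We follow the scheme of Proposition~\ref{bp:2}, now rescaling around the nearest vertex instead of around $x_n$. First, since $G$ has finitely many edges and vertices, after passing to a subsequence all $x_n$ lie on one edge $e_1$ and $\operatorname{dist}(x_n,V)$ is realised at one fixed vertex $v$, which we put at coordinate $0$. By \eqref{b:38}, $\operatorname{dist}(x_n,v)\le C\tilde\varepsilon_n\to0$, so $x_n\to v$. Lemma~\ref{bl:4} places every local maximum in $\mathcal K$, hence $e_1\subset\mathcal K$; this gives $(i)$. We order the remaining edges at $v$ as in $(ii)$: bounded edges $e_2,\dots,e_j$ first, then half-lines. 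Since the rescaled edges $e_i/\tilde\varepsilon_n$ exhaust $[0,+\infty)$, the rescaled neighbourhood of $v$ converges to the star graph $G_m$. The characteristic function $\chi$ becomes $\chi_{\ell_1\cup\dots\cup\ell_j}$ in the limit, because $\mathcal K$ consists exactly of the bounded edges.

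Next we rescale with $\tilde\varepsilon_n$ centred at $v$: set $\tilde u_n(y)=\tilde\varepsilon_n^{2/(p-2)}u_n(\tilde\varepsilon_n y)$ on the rescaled star. The maximum condition makes $\tilde u_n\le1$ on $B_{R_n}(\tilde x_n)$, where $\tilde x_n=\operatorname{dist}(x_n,v)/\tilde\varepsilon_n$ is bounded, and $\tilde u_n(\tilde x_n)=1$. As in Proposition~\ref{bp:2}, Lemma~\ref{bl:4} gives $\tilde\varepsilon_n^2\lambda_n\to\bar\lambda\in[0,1]$, and $\tilde\varepsilon_n^2W\to0$ uniformly by $(W1)$. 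Edgewise elliptic estimates then give $C^2_{\rm loc}([0,\infty))$ convergence on each edge; the limits are continuous at $0$ and satisfy the Kirchhoff condition, since it passes to the limit through the $C^1$ convergence. The limit $\tilde u\ge0$ solves $-\tilde u''+\bar\lambda\tilde u=\tilde u^{p-1}\chi_{\ell_1\cup\dots\cup\ell_j}$ on $G_m$, with $\tilde u(\bar x)=1$. It is positive by ODE uniqueness, exactly as in Proposition~\ref{bp:1}. It has $m(\tilde u)\le k^*$: we transplant compactly supported negative directions back to $G$ by $\varphi_{i,n}(x)=\tilde\varepsilon_n^{1/2}\varphi_i(x/\tilde\varepsilon_n)$, which is legitimate because their supports fit inside the edges at $v$ for large $n$.

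The main obstacle is excluding $\bar\lambda=0$. In that case the half-lines $\ell_{j+1},\dots,\ell_m$ carry $\tilde u''=0$, so $\tilde u$ is affine there; boundedness ($\tilde u\le1$ on large balls) forces it to be constant, which in turn forces $\tilde u'_i(0)=0$ on those edges. On $\ell_1,\dots,\ell_j$, $\tilde u$ solves $-\tilde u''=\tilde u^{p-1}$ with $\tilde u>0$ on a half-line. Phase-plane analysis shows that such a solution must become negative in finite distance unless it is trivial, which is a contradiction since $\tilde u>0$ and $j\ge1$. Hence $\bar\lambda>0$, which proves the first part of \eqref{b:39}; the second part follows from \eqref{b:38}. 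We then redo the limit with $\varepsilon_n=\lambda_n^{-1/2}$, obtaining $v_n\to U$, a solution of the equation in $(ii)$ with $U(\bar x)\ge1$ as its global maximum, where $\bar x=\lim\bar x_n$. Finite Morse index gives stability outside a compact set: otherwise infinitely many disjointly supported negative directions would exist far out, contradicting $m(U)\le k^*$. Lemma~\ref{bl:5} then yields $U\to0$ and $U\in H^1(G_m)$, completing $(ii)$.

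For $(iii)$, Lemma~\ref{bl:6} gives $\varphi\in H^1(G_m)\cap C_c(G_m)$ with $Q(\varphi;U,G_m)<0$. Set $\varphi_n(x)=\varepsilon_n^{1/2}\varphi(x/\varepsilon_n)$, supported near $v$ in the star of edges at $v$. The change of variables, together with $\varepsilon_n^2W\to0$ and local uniform convergence of $v_n$, gives $Q_W(\varphi_n;u_n,G)\to Q(\varphi;U,G_m)<0$. Since $\operatorname{supp}\varphi\subset B_M(0)$ and $\bar x_n$ is bounded, the support lies in $B_{\bar R\varepsilon_n}(x_n)$. For $(iv)$ we change variables $x=\varepsilon_n y$, which turns $B_{R\varepsilon_n}(x_n)$ into $B_R(\bar x_n)$ in the rescaled star. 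Because $R>\bar x$, for large $n$ this ball contains the vertex and consists of $[0,\bar x_n+R]$ on $e_1$ and $[0,R-\bar x_n]$ on the other edges. Uniform convergence $v_n\to U$ on compacts then gives the stated limits.
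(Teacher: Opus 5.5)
Your proposal is correct and follows essentially the same route as the paper: you rescale around the vertex by $\tilde\varepsilon_n$, bound $\tilde\varepsilon_n^2\lambda_n\le 1$ via Lemma~\ref{bl:4}, rule out $\bar\lambda=0$ because $-\tilde u_1''=\tilde u_1^{p-1}$ has no positive solution on the half-line coming from $e_1$, and then pass to the $\varepsilon_n$-scaling and invoke Lemmas~\ref{bl:5} and \ref{bl:6} with rescaled test functions. You also make explicit two points the paper leaves implicit, namely that $e_1\subset\mathcal K$ follows from Lemma~\ref{bl:4} and that $m(U)\le k^*$ gives the stability outside a compact set needed by Lemma~\ref{bl:5}; the only slip is notational: in the $\tilde\varepsilon_n$-scaling the value $\tilde u=1$ is attained at $\beta=\lim_n \operatorname{dist}(x_n,V)/\tilde\varepsilon_n$, and the maximum point becomes $\bar x=c\beta$ only after passing to $v_n$.
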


\begin{proof}
	Under \eqref{b:38}, $\tilde{\varepsilon}_n \to 0$ forces $x_n \to v$ for some $v \in V$. Unlike \cite{chang2023normalized}, $v$ may lie on a half-line of $G$, so \cite[Theorem 4.2]{chang2023normalized} cannot be applied directly but adapts
	with minor changes; this gives $(i)$. Assume $\tfrac{t_n}{\tilde{\varepsilon}_n} \to \beta \in [0,+\infty)$, with
	$t_n = \operatorname{dist}(x_n,V) = x_n$. Define
	\begin{align*}
	\tilde{u}_{i,n}(y) = \tilde{\varepsilon}_n^{\frac{2}{p-2}}
	u_n(\tilde{\varepsilon}_n y),\qquad
	y\in \tfrac{e_i}{\tilde{\varepsilon}_n}\ (i=1,\ldots,j)\ \text{ or }\ y\in \tfrac{\ell_i}{\tilde{\varepsilon}_n}\ (i=j+1,\ldots,m),
	\end{align*}
	and $\tilde{u}_n = (\tilde{u}_{1,n},\ldots,\tilde{u}_{m,n})$, defined on a graph $G_{m,n}$ of $j$ expanding edges and $m-j$ half-lines joined at $0$, with $G_{m,n}\to G_m$. For $a>\beta+1$ and large $n$,
	\begin{align*}
	\tilde{u}_{1,n}\!\left(\tfrac{t_n}{\tilde{\varepsilon}_n}\right)
	= \tilde{\varepsilon}_n^{\frac{2}{p-2}} u_n(x_n) =1,
	\end{align*}
	so
	\begin{equation}\label{b:40}
		\tilde{u}_{1,n}\!\left(\tfrac{t_n}{\tilde{\varepsilon}_n}\right) =1
		=\max_{B_a(0)} \tilde{u}_n,
	\end{equation}
	and $\tilde u_n$ solves
	\begin{align*}
	-\tilde u_n'' +\tilde\varepsilon_n^2 W(\tilde\varepsilon_n y)\tilde u_n +\tilde\varepsilon_n^2\lambda_n\tilde u_n
	= \tau_n \tilde{u}_n^{\,p-1} \chi_{\tilde e_{1,n}\cup\cdots\cup\tilde e_{j,n}},
	\quad \tilde{u}_n >0,
	\end{align*}
	with Kirchhoff condition at $0$. By Lemma \ref{bl:4} we have
	$\bigl(u_n(x_n)\bigr)^{p-2}\ge W(x_n)+\lambda_n$, and since $W\ge 0$ by $(W2)$,
	\begin{align*}
	0<\tilde{\varepsilon}_n^{\,2}\lambda_n
	=\frac{\lambda_n}{\bigl(u_n(x_n)\bigr)^{p-2}}
	\;\le\; \frac{\lambda_n+W(x_n)}{\bigl(u_n(x_n)\bigr)^{p-2}}\;\le\; 1,
	\qquad \forall\, n\in\mathbb N .
	\end{align*}
	Hence, passing to a subsequence if necessary, we may assume
	$\lim_n\tilde{\varepsilon}_n^{\,2}\lambda_n = \bar{\lambda}\in [0,1]$. Since
	$W\in L^\infty(G)$ and $\tilde{\varepsilon}_n\to0$, we have
	$\tilde{\varepsilon}_n^{\,2}W(\tilde{\varepsilon}_ny)\to0$ uniformly on compact
	sets, and therefore
	\begin{align*}
	\tilde{\varepsilon}_n^{\,2}\bigl(\lambda_n+W(\tilde{\varepsilon}_ny)\bigr) \to\bar{\lambda} \in [0,1].
	\end{align*}
	Elliptic estimates give, for $i=1,\ldots,j$, $\tilde u_{i,n}\to\tilde u_i$ in $C^2_{\mathrm{loc}}([0,+\infty))$, with
	\begin{equation}\label{b:41}
		-\tilde u_i''+\bar{\lambda}\,\tilde u_i = \tilde u_i^{\,p-1}, \qquad \tilde u_i\ge 0
		\quad \text{in } (0,+\infty),
	\end{equation}
	and for $i=j+1,\ldots,m$, $\tilde{u}_{i,n}\to\tilde{u}_i$ with $-\tilde{u}_i''+\bar{\lambda}\,\tilde{u}_i=0$, $\tilde u_i\ge0$. Continuity of $\tilde u_n$ and uniform convergence give $\tilde{u}_{i_1}(0)=\tilde{u}_{i_2}(0)$ for all $i_1\ne i_2$, so $\tilde u=(\tilde u_1,\ldots,\tilde u_m)$ is defined on 	$G_m$, and the Kirchhoff condition passes to the limit. Since 	$\tilde u_1(\beta)=\lim_n\tilde u_{1,n}(t_n/\tilde\varepsilon_n)=1$ is a global maximum by \eqref{b:40}, ODE uniqueness gives $\tilde u_1>0$ on $(0,+\infty)$,and then, via the Kirchhoff condition, $\tilde u_i>0$ on $(0,+\infty)$ for all $i$.
	
	We claim $m(\tilde u)\le k^*$, arguing as in Proposition \ref{bp:2} and using $m(u_n)\le k^*$. If $\bar{\lambda}=0$, \eqref{b:41} becomes 	
    \begin{align*} 	
    -\tilde u_1'' = \tilde u_1^{\,p-1}, \qquad \text{on } (0,+\infty), \end{align*} 
    which, by phase-plane analysis, has only nontrivial periodic sign-changing solutions --- a contradiction. Hence $\bar{\lambda}>0$, so
	\begin{equation}\label{b:42}
		0< \liminf_{n\to\infty} \frac{\lambda_n}
		{\bigl(u_n(x_n)\bigr)^{p-2}} \le \limsup_{n\to\infty}
		\frac{\lambda_n} {\bigl(u_n(x_n)\bigr)^{p-2}} \le 1,
	\end{equation}
	which, since $\tilde\varepsilon_n/\varepsilon_n=\bigl(\tilde\varepsilon_n^{\,2}\lambda_n\bigr)^{1/2}\to\bar\lambda^{1/2}=:c$,
	is the first estimate in \eqref{b:39}. By Lemma \ref{bl:5}, $\tilde u(x)\to0$ and $\tilde u\in H^1(G_m)$. Passing to $v_n$ and combining \eqref{b:38} with \eqref{b:42}, $\limsup_n \frac{\operatorname{dist}(x_n,V)}{\varepsilon_n}<+\infty$, and $v_n\to U$ in $C^0_{\mathrm{loc}}(G_m)$ and in
	$C^2_{\mathrm{loc}}([0,+\infty))$ on each half-line, with limit $U=(U_1,\ldots,U_m)$ solving
	\begin{align*}
	\begin{cases}
		- U'' + U = U^{p-1}\chi_{\ell_1\cup\cdots\cup\ell_j} & \text{on } G_m,\\[2mm]
		U\ge 0 & \text{on } G_m,\\[2mm]
		\displaystyle \sum_{i=1}^{m} U_i'(0^+) =0.
	\end{cases}
	\end{align*}
	Moreover $U$ has a strictly positive global maximum at $\bar x\in\ell_1$ with $U_1(\bar x)\ge1$, so $U$ is nontrivial and, by the strong maximum principle and ODE uniqueness, $U>0$ on $G_m$; also $m(U)\le k^*$. By Lemma \ref{bl:5}, $U(x)\to0$, proving $(ii)$.
	
	Assertion $(iii)$ follows from Lemma \ref{bl:6}: there exists
	$\varphi\in H^1(G_m)\cap C_c(G_m)$ with $Q(\varphi;U,G_m)<0$,
	where $Q(\cdot\,;U,G_m)$ denotes the quadratic form associated
	with the autonomous limit problem on $G_m$, namely
	\begin{align*}
	Q(\varphi;U,G_m)=\int_{G_m}\Bigl(|\varphi'|^2
	+\bigl(1-(p-1)U^{p-2}\chi_{\ell_1\cup\cdots\cup\ell_j}\bigr)\varphi^2\Bigr)\,dy .
	\end{align*}
	Define the rescaled test function
	\begin{align*}
	\varphi_{n}(x) = \varepsilon_n^{\frac12}\,
	\varphi\!\Bigl(\tfrac{x}{\varepsilon_n}\Bigr),
	\end{align*}
	which, $\varphi$ having compact support in $G_m$, is supported in $B_{\bar R\varepsilon_n}(x_n)$ for some $\bar R>0$ and, for $n$ large, lies in the union of the edges incident at $v$; hence $\varphi_n\in H^1(G)\cap C_c(G)$. By
	the change of variables $x=\varepsilon_n y$, using $\varepsilon_n^{2}\lambda_n=1$, $\varepsilon_n^{2}W(\varepsilon_n y)\to0$ uniformly on compact sets, and $v_n\to U$ in $C^0_{\mathrm{loc}}(G_m)$, we obtain
	\begin{align*}
	Q_W(\varphi_{n};u_n,G)\;\xrightarrow[n\to\infty]{}\;Q(\varphi;U,G_m)<0 .
	\end{align*}
	Therefore $Q_W(\varphi_{n};u_n,G)<0$ for all sufficiently large $n$, which establishes $(iii)$.
	
	Finally, $(iv)$ follows from the local uniform convergence $v_n\to U$: for $R>\bar x$ and $q\ge1$, the change of variables $x=\varepsilon_n y$ together with
	$\varepsilon_n=\lambda_n^{-1/2}$ gives
	\begin{align*}
	\int_{B_R(\bar x_n)} v_n^q\,dy
	=\lambda_n^{\frac12-\frac{q}{p-2}}\int_{B_{R\varepsilon_n}(x_n)} u_n^q\,dx ,
	\end{align*}
	whence
	\begin{align*}
	\lim_{n\to\infty} \lambda_n^{\frac12-\frac{q}{p-2}}
	\int_{B_{R\varepsilon_n}(x_n)} u_n^q\,dx
	= \int_{B_R(\bar{x})} U^q\,dy
	= \int_{[0,\bar{x}+R]} U_1^q\,dy + \sum_{i=2}^{m} \int_{[0,R-\bar{x}]} U_i^q\,dy .
	\end{align*}
	This proves $(iv)$.  \qed
\end{proof}

\begin{prop}\label{bp:4}
	Let $G$ be a noncompact metric graph with finitely many edges and vertices, let $W$ satisfy $(W1)$--{$(W2)$}, and let $p>2$. Let $\{u_n\}\subset H^1(G)$ be a sequence of positive solutions of \eqref{b:31} with $\lambda_n\to+\infty$ and $m(u_n)\le 2$ for every $n\in\mathbb{N}$. Then there exist $k\in\{1,2\}$, sequences of points $\{x_n^i\}$, $i=1,\ldots,k$, and constants $R,C_1,C_2>0$ independent of $n$, such that
    	\begin{align*}
	\lambda_n^{1/2}\,\operatorname{dist}(x_n^1,x_n^2)\to+\infty,
	\qquad\text{if } k=2,
	\end{align*}
	\begin{align*}
	u_n(x_n^i) = \max_{B_{R_n\lambda_n^{-1/2}}(x_n^i)} u_n,
	\qquad \text{for all } i=1,\ldots,k,
	\end{align*}
	for some sequence $R_n\to+\infty$, and
	\begin{equation}\label{b:43}
		u_n(x) \le C_1\lambda_n^{\frac1{p-2}}
		\sum_{i=1}^{k} e^{-C_2\lambda_n^{1/2}\operatorname{dist}(x,x_n^i)}
		+ C_1\lambda_n^{\frac1{p-2}} \sum_{j=1}^{m_3}
		e^{-C_2\lambda_n^{1/2}\operatorname{dist}(x,v_j)},
	\end{equation}
	for every $x\in G\setminus \bigcup_{i=1}^{k} B_{R\lambda_n^{-1/2}}(x_n^i)$,
	where $v_1,\ldots,v_{m_3}$ denote all the vertices of $G$.
\end{prop}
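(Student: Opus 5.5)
The plan is the standard Esposito--Petralia type iteration, as adapted in \cite{chang2023normalized}. It selects concentration points one at a time and stops after at most two, using the Morse index bound $m(u_n)\le 2$; the final step is a comparison argument that yields the exponential decay \eqref{b:43}.

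\textbf{Step 1: first concentration point.} Let $x_n^1$ be a global maximum point of $u_n$. It exists because $u_n\in H^1(G)$ is continuous and tends to $0$ along the half-lines. By Lemma~\ref{bl:4}, $x_n^1\in\mathcal K$ and $u_n(x_n^1)\ge\lambda_n^{1/(p-2)}\to+\infty$. The maximality condition then holds trivially for every $R_n$. Depending on whether \eqref{b:33} or \eqref{b:38} holds, Proposition~\ref{bp:2} or Proposition~\ref{bp:3} applies. In either case we obtain $\tilde\varepsilon_n\sim\varepsilon_n=\lambda_n^{-1/2}$, a limit profile $U$ that decays at infinity, and a test function $\varphi_n^1$ supported in $B_{\bar R\varepsilon_n}(x_n^1)$ with $Q_W(\varphi_n^1;u_n,G)<0$.

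\textbf{Step 2: the alternative and the second point.} Fix $R$ large. We ask whether
\begin{align*}
\sup_{x\in G\setminus B_{R\varepsilon_n}(x_n^1)} \lambda_n^{1/2}\operatorname{dist}(x,x_n^1)\, u_n(x)^{\frac{p-2}{2}}
\end{align*}
remains bounded. If it is unbounded along a subsequence, a doubling-lemma choice (as in \cite{esposito2011pointwise}) produces $x_n^2$ with $\lambda_n^{1/2}\operatorname{dist}(x_n^1,x_n^2)\to+\infty$ that is a local maximum on a ball of radius $R_n\tilde\varepsilon_n(x_n^2)$ with $R_n\to\infty$. Here we must check that the scale $u_n(x_n^2)^{-(p-2)/2}$ is comparable to $\varepsilon_n$. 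This follows again from Lemma~\ref{bl:4} together with the nondegeneracy $\bar\lambda>0$ obtained in Propositions~\ref{bp:2}--\ref{bp:3}. Those propositions then give a second negative direction $\varphi_n^2$, supported in $B_{\bar R\varepsilon_n}(x_n^2)$, whose support is disjoint from that of $\varphi_n^1$. We repeat the procedure. If a third point were produced, the functions $\varphi_n^1,\varphi_n^2,\varphi_n^3$ would span a $3$-dimensional negative subspace of $Q_W$, contradicting $m(u_n)\le 2$. Hence the process stops at some $k\in\{1,2\}$, and we obtain the bound
\begin{align*}
u_n(x)\le C\lambda_n^{\frac1{p-2}}\Big(\min_i \lambda_n^{1/2}\operatorname{dist}(x,x_n^i)\Big)^{-\frac{2}{p-2}}.
\end{align*}
The constant $\lambda_n^{1/(p-2)}$ in this bound comes from the scaling argument: the alternative $\sup\operatorname{dist}\cdot u^{(p-2)/2}\to\infty$ is excluded by the Morse index bound.

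\textbf{Step 3: exponential decay.} I expect this step to be the main obstacle. For $R$ large, the bound above gives $u_n^{p-2}\le\lambda_n/2$ on $G\setminus\bigcup_i B_{R\varepsilon_n}(x_n^i)$. On this set $u_n$ satisfies
\begin{align*}
-u_n''+\tfrac{\lambda_n}{2}u_n\le -u_n''+(W+\lambda_n-\tau_n\chi u_n^{p-2})u_n=0,
\end{align*}
using $W\ge 0$ from $(W2)$. I would build on each edge an explicit supersolution, namely a sum of exponentials $e^{-C_2\lambda_n^{1/2}\operatorname{dist}(\cdot,x_n^i)}$ and $e^{-C_2\lambda_n^{1/2}\operatorname{dist}(\cdot,v_j)}$ with $C_2<1/\sqrt2$, multiplied by $C_1\lambda_n^{1/(p-2)}$. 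The vertex terms are included so that the comparison survives the Kirchhoff junctions: distance functions to a vertex are not smooth across the other edges, and the bumps centred at the vertices absorb the resulting boundary values. The comparison is then carried out edge by edge by the maximum principle on intervals. The boundary values on $\partial B_{R\varepsilon_n}(x_n^i)$ and at the vertices are controlled by the bound $u_n\le C\lambda_n^{1/(p-2)}$ from Step 2. The decay $u_n\to 0$ along the half-lines supplies the boundary condition at infinity. The delicate points are the kinks of the distance functions at vertices lying outside the excluded balls and at the points equidistant from two centres. I would handle them by noting that a minimum of sums of exponentials is still a supersolution in the viscosity sense, and that at a vertex the Kirchhoff sum of outgoing derivatives of the supersolution is nonpositive. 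This nonpositivity holds for the vertex bump terms and is the reason those terms appear in \eqref{b:43}.
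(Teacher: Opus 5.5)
Your Steps~1--2 follow the paper's Step~1: at most two blow-up points via Lemma~\ref{bl:4}, Propositions~\ref{bp:2}--\ref{bp:3} and $m(u_n)\le2$, with the Esposito--Petralia iteration taken from the references. Your inequality $-u_n''+\frac{\lambda_n}{2}u_n\le0$ away from the balls is the paper's \eqref{b:47}.

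The gap is the barrier in Step~3. Along an edge, $x\mapsto\operatorname{dist}(x,y)$ is a minimum of affine functions of slopes $\pm1$. Hence $e^{-C_2\lambda_n^{1/2}\operatorname{dist}(x,y)}$ is the \emph{maximum}, not the minimum, of exponentials. As soon as $\mathcal K$ contains a cycle (a tadpole suffices), an edge can contain a cut point of $y=x_n^i$ or $y=v_j$, i.e.\ a point from which two routes to $y$ have equal length. The point of the cycle opposite to $x_n^i$ is at distance of order one from $x_n^i$, so it lies in the region where you compare. There this summand, and hence your whole sum, has a convex corner. A function with a convex corner is not a viscosity supersolution of $-\Phi''+\frac{\lambda_n}{2}\Phi\ge0$, because $C^2$ functions touching it from below may have arbitrarily large second derivative. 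Equivalently, $u_n-\Phi$ has a concave corner there, which is compatible with a positive maximum, so the maximum principle yields no contradiction. The kinks that matter are these cut points, not points equidistant from two different centres: a sum of exponentials has no corner there.

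The repair, which is the paper's argument, is never to ask the intrinsic expression to be a supersolution.
Consider each component $I=(a,b)$ of $\{x\in e:\min_i\operatorname{dist}(x,x_n^i)>R\lambda_n^{-1/2}\}$. Its finite endpoints are vertices or points of $\partial B_{R\lambda_n^{-1/2}}(x_n^i)$, where $u_n\le\Lambda_n:=C_0\lambda_n^{1/(p-2)}$ by the global bound.
On $I$, compare $u_n$ with the smooth barrier $\Lambda_n\bigl(e^{-C_2\lambda_n^{1/2}(t-a)}+e^{-C_2\lambda_n^{1/2}(b-t)}\bigr)$ by the one-dimensional maximum principle. If $b=+\infty$, use a single exponential together with $u_n\to0$ at infinity.
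Only afterwards pass to \eqref{b:43}. Use $t-a\ge\operatorname{dist}(x,v_j)$ if $a=v_j$, and $t-a\ge\operatorname{dist}(x,x_n^i)-R\lambda_n^{-1/2}$ if $a\in\partial B_{R\lambda_n^{-1/2}}(x_n^i)$; this gives $C_1=2C_0e^{C_2R}$.
No Kirchhoff condition enters. The vertex terms in \eqref{b:43} are there because vertices are endpoints at which only the crude bound $u_n\le C_0\lambda_n^{1/(p-2)}$ is known. They do not come from a sign of Kirchhoff sums of the barrier.

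Finally, a slip in Step~2: the quantity to test is the scale-invariant $\operatorname{dist}(x,x_n^1)\,u_n(x)^{\frac{p-2}{2}}$. With your extra factor $\lambda_n^{1/2}$ the supremum always diverges. This happens already at distance $R\lambda_n^{-1/2}$ from $x_n^1$, where $u_n$ is comparable to $\lambda_n^{1/(p-2)}$, so the iteration would never stop. Your final pointwise bound corresponds to the correct normalization.
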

\begin{proof}
	Throughout, we set
	\begin{align*}
	d_n(x) := \min\bigl\{\operatorname{dist}(x,x_n^i) \,:\, i=1,\ldots,k\bigr\},
	\qquad
	A_{n,R} := \bigl\{x\in G \,:\, d_n(x) > R\lambda_n^{-1/2}\bigr\}.
	\end{align*}
	
	\medskip
	\noindent\textbf{Step 1: Selection of the blow-up points and uniform smallness.} We claim that there exist $k\in\{1,2\}$ and sequences $\{x_n^i\}$, $i=1,\ldots,k$, satisfying the two displayed conditions of the statement, together with
	\begin{equation}\label{b:44}
		\lim_{R\to+\infty}
		\left( \limsup_{n\to\infty}\;
		\lambda_n^{-\frac{1}{p-2}} \max_{d_n(x)\ge R\lambda_n^{-1/2}} u_n(x) \right) = 0.
	\end{equation}
	By Lemma \ref{bl:4} every local maximum point of $u_n$ lies on $\mathcal{K}$, and Propositions \ref{bp:2} and \ref{bp:3} describe the blow-up profile of $u_n$ near any such point: after rescaling by $\varepsilon_n=\lambda_n^{-1/2}$, the potential term $\varepsilon_n^2\, W(x_n+\varepsilon_n\,\cdot\,)$ vanishes in the limit, and the profile solves the autonomous equation on $\mathbb{R}$ (Proposition \ref{bp:2}) or on a star graph $G_m$ (Proposition \ref{bp:3}). Each such profile carries a
	negative direction of the quadratic form supported in a ball
	$B_{\bar R\varepsilon_n}(x_n)$, by Propositions \ref{bp:2}$(iii)$ and \ref{bp:3}$(iii)$.
    	These balls are mutually disjoint whenever $\lambda_n^{1/2}\operatorname{dist}(x_n^i,x_n^{i'})\to+\infty$, so each concentration point contributes at least one dimension to the Morse index of $u_n$; as $m(u_n)\le 2$, at most two such points can occur, whence $k\in\{1,2\}$. Given this, the selection procedure and the uniform decay \eqref{b:44} follow by adapting \cite[Theorem 3.2]{esposito2011pointwise}, exactly as in \cite[Theorem 4.6, Step 1]{chang2023normalized}; the presence of $W$ plays no role here, since $W\in L^\infty(G)$ and $\varepsilon_n^2\|W\|_{L^\infty(G)}\to0$.
	
	In particular, \eqref{b:44} yields: for every $\varepsilon\in(0,1)$ there exist
	$R>0$ and $n_R\in\mathbb{N}$ such that
	\begin{equation}\label{b:45}
		\max_{d_n(x)> R\lambda_n^{-1/2}} u_n(x) \;\le\; \varepsilon\,\lambda_n^{\frac{1}{p-2}},
		\qquad \forall\, n\ge n_R .
	\end{equation}
	Moreover, by \eqref{b:37} (resp.\ \eqref{b:42}) we have
	$\bigl(u_n(x_n^i)\bigr)^{p-2}\le C\,\lambda_n$ for some $C>0$ independent of $n$,
	so that
	\begin{equation}\label{b:46}
		u_n(x) \;\le\; C_0\,\lambda_n^{\frac{1}{p-2}},
		\qquad \forall\, x\in G,\ \forall\, n\in\mathbb{N},
	\end{equation}
	for a constant $C_0>0$ independent of $n$.
	
	\medskip
	\noindent\textbf{Step 2: The differential inequality on $A_{n,R}$.} We claim that, choosing $\varepsilon\in(0,1)$ in \eqref{b:45} so small that $\varepsilon^{p-2}\le\tfrac12$, and $R,n$ correspondingly large,
	\begin{equation}\label{b:47}
		-u_n'' + \frac{\lambda_n}{2}\,u_n \;\le\; 0
		\qquad \text{on } A_{n,R}\cap e, \text{ for every edge } e\in E .
	\end{equation}
	
	\emph{Case 1: $x\in \mathcal{K}\cap A_{n,R}$.} Here $\chi(x)=1$, so \eqref{b:31} gives
	\begin{align*}
	u_n''(x) = \bigl(W(x)+\lambda_n-\tau_n u_n(x)^{p-2}\bigr)\,u_n(x).
	\end{align*}
	By \eqref{b:45} and $\varepsilon^{p-2}\le\tfrac12$ we have
	$u_n(x)^{p-2}\le \varepsilon^{p-2}\lambda_n\le \tfrac{\lambda_n}{2}$, and since
	$\tau_n\le 1$,
	\begin{align*}
	W(x)+\lambda_n-\tau_n u_n(x)^{p-2}
	\;\ge\;  \lambda_n - \frac{\lambda_n}{2}
	\; =\; \frac{\lambda_n}{2}.
	\end{align*}
	As $u_n>0$, this yields $u_n''(x)\ge \tfrac{\lambda_n}{2}u_n(x)$, i.e.\
	\eqref{b:47}.
	
	\emph{Case 2: $x\in G\setminus\mathcal{K}$.} Here $\chi(x)=0$, so \eqref{b:31}
	gives
	\begin{align*}
	u_n''(x) = \bigl(W(x)+\lambda_n\bigr)u_n(x) \;\ge\; \lambda_n u_n(x)
	\;\ge\; \frac{\lambda_n}{2}\,u_n(x),
	\end{align*}
	using $W\ge0$ by $(W2)$, $u_n>0$ and $\lambda_n>0$. Again \eqref{b:47} holds.
	
	In both cases the nonnegativity of $W$ acts in our favour, so that \eqref{b:47}
	is obtained under weaker requirements than in the autonomous case.
	
	\medskip
	\noindent\textbf{Step 3: Comparison with exponential barriers.} Fix $C_2:=\tfrac12$ and let $e\in E$ be an edge, identified with $[0,\ell_e]$ or with $[0,+\infty)$. The set $A_{n, R}\cap e$ is a finite union of at most $k+1$ relatively open subintervals of $e$. Let $I=(a,b)$ be one of them, with $b\le+\infty$. By construction, each finite endpoint of $I$ is either a vertex of
	$G$, or a point of $\partial B_{R\lambda_n^{-1/2}}(x_n^i)$ for some $i\in\{1,\ldots,k\}$; in either case it belongs to $\overline{A_{n,R}}$, so that \eqref{b:45}--\eqref{b:46} give
	\begin{equation}\label{b:48}
		u_n \;\le\; \Lambda_n := C_0\,\lambda_n^{\frac{1}{p-2}}
		\qquad\text{at every finite endpoint of } I .
	\end{equation}
	
	Assume first $b<+\infty$, and define on $\overline I$ the barrier
	\begin{align*}
	\Phi_n(t) := \Lambda_n\Bigl[
	e^{-C_2\lambda_n^{1/2}(t-a)} + e^{-C_2\lambda_n^{1/2}(b-t)} \Bigr].
	\end{align*}
	A direct computation gives $\Phi_n'' = C_2^2\lambda_n\Phi_n
	= \tfrac{\lambda_n}{4}\Phi_n$, hence
	\begin{align*}
	-\Phi_n'' + \frac{\lambda_n}{2}\Phi_n
	= \Bigl(\frac{\lambda_n}{2}-\frac{\lambda_n}{4}\Bigr)\Phi_n
	= \frac{\lambda_n}{4}\,\Phi_n \;>\;0 \qquad\text{on } I,
	\end{align*}
	while $\Phi_n(a)\ge\Lambda_n$ and $\Phi_n(b)\ge\Lambda_n$, so that by \eqref{b:48} $\Phi_n\ge u_n$ at both endpoints. Setting $w:=u_n-\Phi_n$, we obtain from \eqref{b:47}
	\begin{align*}
	-w'' + \frac{\lambda_n}{2}\,w \;\le\; 0 \quad\text{on } I,
	\qquad w\le 0 \ \text{ on } \partial I .
	\end{align*}
	Suppose $w$ attained a positive maximum at some $t_0\in I$. Then $w(t_0)>0$ and $w''(t_0)\le0$, whence $-w''(t_0)+\tfrac{\lambda_n}{2}w(t_0)>0$, a contradiction. Therefore $w\le0$, i.e.
	\begin{equation}\label{b:49}
		u_n(t) \;\le\; \Lambda_n\Bigl[
		e^{-C_2\lambda_n^{1/2}(t-a)} + e^{-C_2\lambda_n^{1/2}(b-t)} \Bigr],
		\qquad \forall\, t\in \overline I .
	\end{equation}
	
	If instead $b=+\infty$ (so that $e$ is a half-line), we use the barrier $\Phi_n(t):=\Lambda_n e^{-C_2\lambda_n^{1/2}(t-a)}$, which again satisfies $-\Phi_n''+\tfrac{\lambda_n}{2}\Phi_n>0$ on $I$ and $\Phi_n(a)\ge u_n(a)$. Since
	$u_n\in H^1(e)$ we have $u_n(t)\to0$ as $t\to+\infty$, and also $\Phi_n(t)\to 0$; hence $w=u_n-\Phi_n$ satisfies $\limsup_{t\to+\infty}w(t)\le0$, and the same argument gives $w\le0$ on $\overline I$, i.e.\ \eqref{b:49} without the second exponential.
	
	\medskip
	\noindent\textbf{Step 4: Conclusion.}
	It remains to rewrite \eqref{b:49} intrinsically. Let
	$x\in G\setminus\bigcup_{i=1}^{k}B_{R\lambda_n^{-1/2}}(x_n^i)$, and let $I=(a,b)$ be the component of $A_{n,R}\cap e$ containing $x$, with $t$ the coordinate of $x$ on $e$.
	
	If the endpoint $a$ is a vertex $v_j$ of $G$, then $t-a=\operatorname{dist}(x,v_j)$, so that
	\begin{align*}
	\Lambda_n e^{-C_2\lambda_n^{1/2}(t-a)}
	= C_0\,\lambda_n^{\frac{1}{p-2}}\,e^{-C_2\lambda_n^{1/2}\operatorname{dist}(x,v_j)} .
	\end{align*}
	If instead $a\in\partial B_{R\lambda_n^{-1/2}}(x_n^i)$, then
	$t-a=\operatorname{dist}(x,x_n^i)-R\lambda_n^{-1/2}$, whence
	\begin{align*}
	\Lambda_n e^{-C_2\lambda_n^{1/2}(t-a)}
	= C_0\,e^{C_2R}\,\lambda_n^{\frac{1}{p-2}}\,
	e^{-C_2\lambda_n^{1/2}\operatorname{dist}(x,x_n^i)} .
	\end{align*}
	The same holds for the endpoint $b$. Since $R$ is fixed and $G$ has finitely many edges and vertices, setting $C_1:=2\,C_0\,e^{C_2R}$ and summing the resulting
	contributions over the (at most two) blow-up points and the $m_3$ vertices, we obtain \eqref{b:43}, namely
	\begin{align*}
	u_n(x) \le C_1\lambda_n^{\frac1{p-2}}
	\sum_{i=1}^{k} e^{-C_2\lambda_n^{1/2}\operatorname{dist}(x,x_n^i)}
	+ C_1\lambda_n^{\frac1{p-2}} \sum_{j=1}^{m_3}
	e^{-C_2\lambda_n^{1/2}\operatorname{dist}(x,v_j)},
	\end{align*}
	for every $x\in G\setminus\bigcup_{i=1}^{k}B_{R\lambda_n^{-1/2}}(x_n^i)$, with
	$C_1,C_2>0$ independent of $n$. This completes the proof.  \qed
\end{proof}
\begin{rem}
		Two features of this proof deserve comment. Only $(W1)$ and the nonnegativity $(W2)$ of the potential are used: assumption $(W3)$, essential for the mountain-pass geometry (Lemma \ref{bl:2}) and for the test-space construction (Lemma \ref{bl:3}), and assumption $(W4)$, essential for the sign of $\lambda_\tau$ (Proposition \ref{bp:1}), play no role in the decay estimate. And the sign of $W$ acts in our favour in Step 2: on $\mathcal K\cap A_{n,R}$ it only increases the coefficient $W+\lambda_n-\tau_nu_n^{p-2}$, while on $G\setminus\mathcal K$ it gives $u_n''=(W+\lambda_n)u_n\ge\lambda_nu_n$ at once, so that \eqref{b:47} holds under weaker smallness requirements than in the autonomous case treated in \cite{borthwick2023normalized}. Without a sign condition on $W$, the corresponding estimate requires the maximum principle of \cite[Proposition 2.11]{carrillo2026blow}.
\end{rem}

We can now prove Theorem \ref{bt:2}.

\medskip
\noindent\textbf{Proof of Theorem \ref{bt:2}.}
By Proposition \ref{bp:1} there exist a sequence $\tau_n\to1^-$ and corresponding constrained critical points $u_n:=u_{\tau_n}\in H^1_\mu(G)$ of $E_{W,\tau_n}(\cdot,G)$ on $H^1_\mu(G)$, lying at the mountain-pass levels $c_{W,\tau_n}$, with $u_n>0$ on $G$, $m(u_n)\le2$, and associated Lagrange multipliers $\lambda_n>0$ for every $n\in\mathbb{N}$.

\medskip
\noindent\textbf{Step 1: Boundedness of $\{\lambda_n\}$.}
We have already observed that if $\{\lambda_n\}$ is bounded, then $\{u_n\}$ is bounded in $H^1(G)$. Assume by contradiction that, up to a subsequence, $\lambda_n \to +\infty$. Then Propositions \ref{bp:2}, \ref{bp:3} and \ref{bp:4} apply to $\{u_n\}$. Let $k\in\{1,2\}$ and $\{x_n^i\}_{i=1}^k$ be the blow-up points given by Proposition \ref{bp:4}, and fix $R>0$ as in its statement. We estimate the mass of $u_n$ by splitting
\begin{equation}\label{b:50}
	\mu = \int_G |u_n|^2\,dx
	= \sum_{i=1}^{k}\int_{B_{R\varepsilon_n}(x_n^i)} |u_n|^2\,dx
	\;+\; \int_{G\setminus\bigcup_{i=1}^{k}B_{R\varepsilon_n}(x_n^i)} |u_n|^2\,dx ,
\end{equation}
where $\varepsilon_n=\lambda_n^{-1/2}$.

\emph{Contribution of the blow-up balls.} Applying
Proposition \ref{bp:2}$(iv)$ (resp.\ Proposition \ref{bp:3}$(iv)$) with $q=2$, we obtain, for each $i=1,\ldots,k$,
\begin{align*}
\lambda_n^{\frac12-\frac{2}{p-2}}
\int_{B_{R\varepsilon_n}(x_n^i)} |u_n|^2\,dx
\;\longrightarrow\; \int_{B_R(\bar x)} U^2\,dy \;<\;+\infty ,
\end{align*}
whence
\begin{equation}\label{b:51}
	\int_{B_{R\varepsilon_n}(x_n^i)} |u_n|^2\,dx
	\;\le\; C\,\lambda_n^{\frac{2}{p-2}-\frac12}
\end{equation}
for some $C>0$ independent of $n$. Since $p>6$, we have $p-2>4$ and therefore
\begin{equation}\label{b:52}
	\frac{2}{p-2}-\frac12 \;<\; 0 .
\end{equation}
As $\lambda_n\to+\infty$, the right-hand side of \eqref{b:51} tends to $0$, so that
\begin{align*}
\sum_{i=1}^{k}\int_{B_{R\varepsilon_n}(x_n^i)} |u_n|^2\,dx
\;\longrightarrow\; 0 \qquad (n\to\infty).
\end{align*}

\emph{Contribution of the exterior region.} Set	$\Omega_n:=G\setminus\bigcup_{i=1}^{k}B_{R\varepsilon_n}(x_n^i)$. By Proposition \ref{bp:4}, and using $\bigl(\sum_{\ell=1}^{N}a_\ell\bigr)^2\le N\sum_{\ell=1}^{N}a_\ell^2$, we obtain for every $x\in\Omega_n$
\begin{align*}
|u_n(x)|^2 \;\le\; C\,\lambda_n^{\frac{2}{p-2}}
\left[ \sum_{i=1}^{k} e^{-2C_2\lambda_n^{1/2}\operatorname{dist}(x,x_n^i)}
+ \sum_{j=1}^{m_3} e^{-2C_2\lambda_n^{1/2}\operatorname{dist}(x,v_j)} \right],
\end{align*}
with $C=C(C_1,k,m_3)>0$ independent of $n$. For any $y\in G$, since $G$ has finitely many edges, the measure of $\{x\in G:\operatorname{dist}(x,y)\in[t,t+dt]\}$ is bounded by $|E|\,dt$, whence
\begin{equation}\label{b:53}
	\int_G e^{-2C_2\lambda_n^{1/2}\operatorname{dist}(x,y)}\,dx
	\;\le\; |E|\int_0^{+\infty} e^{-2C_2\lambda_n^{1/2}t}\,dt
	\;=\; \frac{|E|}{2C_2}\,\lambda_n^{-\frac12}.
\end{equation}
Applying \eqref{b:53} with $y=x_n^i$, $i=1,\ldots,k$, and with $y=v_j$, $j=1,\ldots,m_3$, we conclude
\begin{align*}
\int_{\Omega_n} |u_n|^2\,dx
\;\le\; C\,\lambda_n^{\frac{2}{p-2}}\,(k+m_3)\,\frac{|E|}{2C_2}\,\lambda_n^{-\frac12}
\;=\; \tilde C\,\lambda_n^{\frac{2}{p-2}-\frac12} \;\longrightarrow\; 0 ,
\end{align*}
again by \eqref{b:52} and $\lambda_n\to+\infty$, with $\tilde C>0$ independent of $n$.

\emph{Conclusion of Step 1.} Passing to the limit in \eqref{b:50}, both terms on the right-hand side vanish, so that
\begin{align*}
\mu = \lim_{n\to\infty}\int_G |u_n|^2\,dx = 0 ,
\end{align*}
contradicting $\mu>0$. Hence the assumption $\lambda_n\to+\infty$ is impossible, and $\{\lambda_n\}$ is bounded. Consequently $\{u_n\}$ is bounded in $H^1(G)$.

\medskip
\noindent\textbf{Step 2: Passage to the limit.}
Since $\{\lambda_n\}$ is bounded, up to a subsequence there exists $\hat\lambda\in\mathbb{R}$ with $\lambda_n\to\hat\lambda$; moreover $\hat\lambda\ge0$, because $\lambda_n>0$ for every $n$. As $\{u_n\}$ is bounded in $H^1(G)$ with $u_n>0$, there exists $\hat u\in H^1(G)$ such that, up to a further subsequence,
\begin{align*}
	u_n &\rightharpoonup \hat u && \text{weakly in } H^1(G),\\
	u_n &\to \hat u && \text{in } L^r_{\mathrm{loc}}(G),\ r>2,\\
	u_n(x) &\to \hat u(x) && \text{for a.e.\ } x\in G,
\end{align*}
so that $\hat u\ge0$. Recalling $\tau_n\to1^-$ and $\lambda_n\to\hat\lambda$, we may pass to the limit in the weak formulation of \eqref{b:31} and deduce that $\hat u$ satisfies
\begin{equation}\label{b:54}
	\begin{cases}
		-\hat u'' + W(x)\hat u + \hat\lambda\,\hat u
		= \chi(x)\,\hat u^{\,p-1} & \text{on } G,\\[2mm]
		\displaystyle\sum_{e\succ v}\hat u_e'(v)=0
		& \text{for every vertex } v\in V .
	\end{cases}
\end{equation}
Arguing exactly as in the proof of Proposition \ref{bp:1}, one obtains
\begin{equation}\label{b:55}
	\int_G \bigl|(u_n-\hat u)'\bigr|^2\,dx
	+ \int_G W(x)\,|u_n-\hat u|^2\,dx
	+ \hat\lambda\int_G |u_n-\hat u|^2\,dx \;\longrightarrow\; 0 .
\end{equation}

\medskip
\noindent\textbf{Step 3: $\hat u\not\equiv0$ and $\hat u>0$.}
Suppose $\hat u\equiv0$. Then \eqref{b:55} gives
\begin{equation}\label{b:56}
	\int_G |u_n'|^2\,dx + \int_G W(x)|u_n|^2\,dx
	+ \hat\lambda\int_G |u_n|^2\,dx \;\longrightarrow\; 0 .
\end{equation}
If $\hat\lambda>0$, then by $(W2)$
\begin{align*}
\int_G |u_n'|^2\,dx + \int_G W(x)|u_n|^2\,dx + \hat\lambda\int_G |u_n|^2\,dx \;\ge\; \hat\lambda\int_G |u_n|^2\,dx = \hat\lambda\,\mu >0,
\end{align*}
contradicting \eqref{b:56}. Hence necessarily $\hat\lambda=0$, and \eqref{b:56} yields $\|u_n'\|_{L^2(G)}\to0$. By the Gagliardo--Nirenberg inequality,
\begin{align*}
\|u_n\|_{L^p(G)}^p \le C\,\|u_n\|_{L^2(G)}^{\frac{p+2}{2}}
\|u_n'\|_{L^2(G)}^{\frac{p-2}{2}} \longrightarrow 0 ,
\end{align*}
so that $E_{W,\tau_n}(u_n,G)\to0$. But $E_{W,\tau_n}(u_n,G)=c_{W,\tau_n}\ge\alpha>0$ by Lemma \ref{bl:2}, a contradiction. Therefore $\hat u\not\equiv0$. Since $\hat u\ge0$ solves \eqref{b:54}, the same argument as in Proposition \ref{bp:1}, based on the Kirchhoff conditions and the uniqueness theorem for ordinary differential equations, shows that $\hat u>0$ on $G$.

\medskip
\noindent\textbf{Step 4: $\hat\lambda>0$ and strong convergence.}     Let $\ell$ be an unbounded edge of $G$ satisfying $(W4)$, identified with $[0,+\infty)$.
	Since $\chi\equiv0$ on $\ell$, the restriction of $\hat u$ to $\ell$ satisfies
	\begin{align*}
		-\hat u'' + W(x)\,\hat u + \hat\lambda\,\hat u = 0 \qquad\text{on }(0,+\infty),
	\end{align*}
	with $\hat u>0$ and $\hat u\in H^1(\ell)$.
	Since $\hat\lambda\ge0$ by Step 2, it suffices to exclude $\hat\lambda=0$; in that case $\hat u|_{{\ell}}$ would be a nontrivial solution of \eqref{b:57} in $L^2({\ell})$, which Lemma \ref{bl:7}$(ii)$ forbids. Hence $\hat\lambda>0$.

 All three terms in \eqref{b:55} being nonnegative by $(W2)$, we conclude
\begin{align*}
\|(u_n-\hat u)'\|_{L^2(G)}\to0
\qquad\text{and}\qquad
\|u_n-\hat u\|_{L^2(G)}\to0 ,
\end{align*}
that is, $u_n\to\hat u$ strongly in $H^1(G)$. In particular
$\|\hat u\|_{L^2(G)}^2=\mu$, so $\hat u\in H^1_\mu(G)$, and
$E_{W,1} (\hat u, G)=\lim_{n\to\infty}c_{W,\tau_n}\ge\alpha>0$.
Therefore $(\hat u,\hat\lambda)\in H^1_\mu(G)\times\mathbb{R}^+$ is a positive solution of \eqref{b1:1}--\eqref{b1:3} at a strictly positive energy level. This completes the proof. \qed

\medskip
	      	  \noindent \textbf{Funding} \\
              The second author gratefully acknowledge the financial support provided by the Anusandhan National Research Foundation (ANRF), India, under the project sanction number\\ ANRF/ARGM/2025/002376/MTR.
              
              \medskip
	  \noindent \textbf{Competing Interests} \\
The author declares that there is no conflict of interest. 

	\bibliographystyle{siam}
	\bibliography{ref}

\end{document}